\documentclass[10pt,reqno]{amsart}

\newtheorem{thm}{Theorem}
\newtheorem{lemma}{Lemma}

\newtheorem{cor}{Corollary}
\newtheorem{prop}{Proposition}

\usepackage{amsmath}
\usepackage[psamsfonts]{amssymb}
\usepackage[mathscr]{euscript}

\newcommand{\cal}{\mathcal}

\title[Twistorial examples of Riemannian almost product manifolds
\hfill]{Twistorial examples of Riemannian almost product manifolds and their Gil-Medrano and Naveira types}

\author{Johann Davidov}
\thanks{The  author is partially supported by  the National Science
Fund, Ministry of Education and Science of Bulgaria under contract
DN 12/2 }

\address{Johann Davidov\\Institute of Mathematics and Informatics \\
Bulgarian Academy of Sciences\\ Acad. G.Bonchev Str. Bl.8\\ 1113
Sofia\\ Bulgaria} {\email{jtd@math.bas.bg}

\begin{document}

\begin{abstract}

Non-trivial examples of Riemannian almost product structures are
constructed on the product bundle of the positive and negative
twistor spaces of an oriented Riemannian four-manifold.  The
Gil-Medrano and Naveira types of these structures are determined and
a geometric interpretation of the corresponding classes is given.

\vspace{0,1cm} \noindent 2010 {\it Mathematics Subject
Classification}. Primary 53C15, Secondary 53C28

\vspace{0,1cm} \noindent {\it Key words: Riemannian almost product
manifolds, twistor spaces }
\end{abstract}

\thispagestyle{empty}

\maketitle \vspace{0.5cm}

\section{Introduction}

Recall that a Riemannian almost product manifold is a Riemannian
manifold $(N,h)$ endowed with a pair of orthogonal distributions $V$
and $H$ on $N$ such that $TN=V\oplus H$, $rank\, V< dim\,N$. A
Riemannian manifold of dimension $n$ admits an almost product
structure $(V,H)$ with $rank\,V=d$ if and only if the structure
group of the manifold can be reduced to the group $O(d)\times
O(n-d)$. The decomposition $TN=V\oplus H$ determines an orthogonal
isomorphism $P$ of the tangent bundle $TN$ with $P|V=Id$, $P|H=-Id$,
hence $P^2=Id$, $P\neq\pm Id$ at every point of $N$. Conversely, an
orthogonal isomorphism $P$ of $TN$ with $P_x^2=Id$, $P_x\neq\pm Id$
for every $x\in N$, defines an almost product structure on $(N,h)$
provided the dimension $d(x)$ of the $(+1)$-eigenspaces $V_x$ of
$P_x$ is constant. An isomorphism $P$ with these properties is also
called an almost product structure on $(N,h)$.  The distribution $V$
on which $P$ is the identity map is usually called vertical, while
the orthogonal distribution $H$ is called horizontal.

Similar to the Gray-Hervella classification  of almost Hermitian
manifolds \cite{GH}, A.M. Naveira \cite{Nav} has introduced 36
classes of Riemannian almost product manifolds. These come from an
orthogonal invariant decomposition under the action of the group
$O(d)\times O(n-d)$ on the space  of covariant $3$-tensor on an
Euclidean vector space having the same symmetries as the  the
covariant derivative of the fundamental form $\Phi(X,Y)=h(PX,Y)$ of
a Riemannian almost product manifold. This decomposition have been
found by Naveira [ibid.] and it has been proved by F.J. Carreras
\cite{C} that it is irreducible.

Naveira [ibid.], Gil-Medrano \cite{Med} and A. Montesinos \cite{Mon}
have given geometric interpretations of the Naveira classes. V.
Miquel \cite{Miq} has constructed examples for each class.

Gil-Medrano [ibid] has introduced algebraic conditions for the
covariant derivative of $P$ restricted to the distributions $V$ and
$H$ (see also Sec.~\ref{Gil-Medrano}) and has given their geometric
characterization. Combining one of these conditions on $V$ with one
on $H$, we can cover the 36 classes of Naveira.

A trivial example of a Riemannian almost product manifold is the
product $N=M_1\times M_2$ of Riemannian manifolds with $V=TM_1$ and
$H=TM_2$. In this paper, we use twistor theory to provide
non-trivial examples of Riemannina almost product manifolds.  Let
$(M,g)$ be an oriented four-dimensional Riemannian manifold, and let
${\mathcal Z}_{\pm}$ be the twistor spaces of $(M,g)$, the bundles
over $M$ whose sections are almost complex structures on $M$
compatible with the metric and $\pm$ the orientation. These are
$S^2$-bundles over $M$. The product bundle ${\mathscr P}={\mathcal
Z}_{+}\times {\mathcal Z}_{-}$ admits a natural $2$-parameter family
$G_{t_1,t_2}$, $t_1,t_2>0$, of Riemannian metrics and four
compatible almost product structures ${\mathcal K}_{\nu}$. We show
that these structures are not integrable, so they are not trivial
products even locally. We also find the Gil-Medrano types of
$({\mathscr P},G_{t_1,t_2},{\mathcal K}_{\nu})$, $\nu=1,...,4$,  in
terms of the curvature of the base manifold $(M,g)$ and specific
values of the parameters $t_1, t_2$. Using this, we determine the
Naveira classes of $({\mathscr P},G_{t_1,t_2},{\mathcal K}_{\nu})$.
Finally we give a geometric interpretation of the obtained results.

\section{Preliminaries}

\subsection{The twistor space of a four-manifold}

Let $(M,g)$ be an oriented  Riemannian manifold of dimension four.
The metric $g$ induces a metric on the bundle of two-vectors
$\pi:\Lambda^2TM\to M$ by the formula
$$
g(v_1\wedge v_2,v_3\wedge v_4)=\frac{1}{2}det[g(v_i,v_j)].
$$
The Levi-Civita connection of $(M,g)$ determines a connection on the
bundle $\Lambda^2TM$, both denoted by $\nabla$, and the
corresponding curvatures are related by
$$
R(X\wedge Y)(Z\wedge T)=R(X,Y)Z\wedge T+Z\wedge R(X,Y)T
$$
for $X,Y,Z,T\in TM$. The  curvature operator ${\cal R}$ is the
self-adjoint endomorphism of $\Lambda ^{2}TM$ defined by
$$
 g({\cal R}(X\land Y),Z\land T)=g(R(X,Y)Z,T)
$$
Let us note that we adopt the following definition for the curvature
tensor $R$ : $R(X,Y)=\nabla_{[X,Y]}-[\nabla_{X},\nabla_{Y}]$.

 The Hodge star operator defines an endomorphism $\ast$ of
$\Lambda^2TM$ with $\ast^2=Id$. Hence we have the orthogonal
decomposition
$$
\Lambda^2TM=\Lambda^2_{-}TM\oplus\Lambda^2_{+}TM
$$
where $\Lambda^2_{\pm}TM$ are the subbundles of $\Lambda^2TM$
corresponding to the $(\pm 1)$-eigenvalues of the operator $\ast$.

\smallskip

Let $(E_1,E_2,E_3,E_4)$ be a local oriented orthonormal frame of
$TM$. Set
\begin{equation}\label{s-basis}
s_1^{\pm}=E_1\wedge E_2 \pm E_3\wedge E_4, \quad s_2^{\pm}=E_1\wedge
E_3\pm E_4\wedge E_2, \quad s_3^{\pm}=E_1\wedge E_4\pm E_2\wedge
E_3.
\end{equation}
Then $(s_1^{\pm},s_2^{\pm},s_3^{\pm})$ is a local orthonormal frame
of $\Lambda^2_{\pm}TM$ defining an orientation on
$\Lambda^2_{\pm}TM$, which does not depend on the choice of the
frame $(E_1,E_2,E_3,E_4)$  (see, for example, \cite{D15}).

\smallskip

For every $a\in\Lambda ^2TM$, define a skew-symmetric endomorphism
$K_a$ of $T_{\pi(a)}M$ by

\begin{equation}\label{cs}
g(K_{a}X,Y)=2g(a, X\wedge Y), \quad X,Y\in T_{\pi(a)}M.
\end{equation}

It is easy to check that:
\begin{equation}\label{com-anticom}
\begin{array}{c}
K_a\circ K_b=K_b\circ K_a,\quad {\rm{if}}\quad a\in\Lambda^2_{\pm}T_pM,\> b\in\Lambda^2_{\mp}T_pM;\\[6pt]
K_a\circ K_b=-K_b\circ K_a,\quad {\rm{if}}\quad
a,b\in\Lambda^2_{\pm}T_pM, \> a\perp b.
\end{array}
\end{equation}

Note also that, denoting by $\gamma$ the standard metric
$-\frac{1}{2}Trace\,PQ$ on the space of skew-symmetric
endomorphisms, we have $\gamma(K_a,K_b)=2g(a,b)$ for $a,b\in \Lambda
^2TM$. If $\sigma\in\Lambda^2_{\pm}TM$ is a unit vector, then
$K_{\sigma}$ is a complex structure on the vector space
$T_{\pi(\sigma)}M$ compatible with the metric and $\pm$ the
orientation of $M$. Conversely, the $2$-vector $\sigma$ dual to one
half of the fundamental $2$-form of such a complex structure is a
unit vector in $\Lambda^2_{\pm}TM$. Thus, the unit sphere subbunlde
${\cal Z}_{\pm}$ of $\Lambda^2_{\pm}TM$ parametrizes the complex
structures on the tangent spaces of $M$ compatible with its metric
and $\pm$ orientation. The subbundles ${\mathcal Z}_{+}$ and
${\mathcal Z}_{-}$ are called the postive and the negative twistor
space of $M$. They are the two connected components of the bundle
over $M$ whose fibre at a point $p\in M$ consists of all complex
structures on $T_pM$ compatible with the metric.

\smallskip

The connection $\nabla$ on $\Lambda^2TM$ induced by the Levi-Civita
connection of $M$ preserves the bundles $\Lambda^2_{\pm}TM$, so it
induces a metric connection on each of them denoted again by
$\nabla$. The horizontal distribution of $\Lambda^2_{\pm}TM$ with
respect to $\nabla$ is tangent to the twistor space ${\mathcal
Z}_{\pm}$. Thus, we have the decomposition $T{\mathcal
Z}_{\pm}={\mathcal H}\oplus {\mathcal V}$ of the tangent bundle of
${\mathcal Z}_{\pm}$ into horizontal and vertical components. The
vertical space ${\mathcal V}_{\tau}=\{V\in T_{\tau}{\mathcal
Z}_{\pm}:~ \pi_{\ast}V=0\}$ at a point $\tau\in{\mathcal Z}_{\pm}$
is the tangent space to the fibre of ${\mathcal Z}_{\pm}$ through
$\tau$. Considering $T_{\tau}{\mathcal Z}_{\pm}$ as a subspace of
$T_{\tau}(\Lambda^2_{\pm}TM)$, ${\mathcal V}_{\tau}$ is the
orthogonal complement of $\tau$ in $\Lambda^2_{\pm}T_{\pi(\tau)}M$.
The map $V\ni{\cal V}_{\tau}\to K_{V}$ gives an identification of
the vertical space with the space of skew-symmetric endomorphisms of
$T_{\pi(\tau)}M$ which anti-commute with $K_{\tau}$. Let $s$ be a
local section of ${\mathcal Z}_{\pm}$ such that $s(p)=\tau$ where
$p=\pi(\tau)$. Considering $s$ as a section of $\Lambda^2_{\pm}TM$,
we have $\nabla_{X}s\in{\mathcal V}_{\tau}$ for every $X\in T_pM$
since $s$ has a constant length. Moreover,
$X^h_{\tau}=s_{\ast}X-\nabla_{X}s$ is the horizontal lift of $X$ at
${\tau}$.

\smallskip

Denote by $\times$ the usual vector cross product on the oriented
$3$-dimensional vector space $\Lambda^2_{\pm}T_pM$, $p\in M$,
endowed with the metric $g$. Then it is easy to check that

\begin{equation}\label{eq 2.5}
g(R(a)b,c)=\pm g({\cal R}(a),b\times c))
\end{equation}
for $a\in\Lambda^2T_{p}M$, $b,c\in \Lambda ^{2}_{\pm}T_{p}M$. Also
\begin{equation}\label{KK}
K_{b}\circ K_{c}=-g(b,c)Id\pm K_{b\times c},\quad b,c\in \Lambda
^{2}_{\pm}T_{p}M.
\end{equation}

\smallskip

Denote by ${\cal B}:\Lambda^2TM\to \Lambda^2TM$ the endomorphism
corresponding to the traceless Ricci tensor. If  $s$ denotes the
scalar curvature of $(M,g)$ and $\rho:TM\to TM$ is the Ricci
operator, $g(\rho(X),Y)=Ricci(X,Y)$, we have
$$
{\mathcal B}(X\wedge Y)=\rho(X)\wedge
Y+X\wedge\rho(Y)-\frac{s}{2}X\wedge Y.
$$
Note that ${\mathcal B}$ sends  $\Lambda^2_{\pm}TM$ into
$\Lambda^2_{\mp}TM$. Let ${\cal W}: \Lambda^2TM\to \Lambda^2TM$ be
the endomorphism corresponding to the Weyl conformal tensor. Denote
the restriction of ${\cal W}$ to $\Lambda^2_{\pm}TM$ by ${\cal
W}_{\pm}$, so ${\cal W}_{\pm}$ sends $\Lambda^2_{\pm}TM$ to
$\Lambda^2_{\pm}TM$ and vanishes on $\Lambda^2_{\mp}TM$. Moreover,
$Trace\,{\mathcal W}_{\pm}=0$.

 It is well known that the curvature operator decomposes as (\cite{ST}, see
e.g. \cite[Chapter 1 H]{Besse})
\begin{equation}\label{dec}
{\cal R}=\frac{s}{6}Id+{\mathcal B}+{\mathcal W}_{+}+{\mathcal
W}_{-}
\end{equation}
Note that this differs by  a factor $1/2$ from \cite{Besse} because
of the factor $1/2$ in our definition of the induced metric on
$\Lambda^2TM$. Note also that changing the orientation of $M$
interchanges the roles of $\Lambda^2_{+}TM$ and $\Lambda^2_{-}TM$,
correspondingly the roles of ${\mathcal W}_{+}$ and ${\mathcal
W}_{-}$.

\smallskip

The Riemannian manifold $(M,g)$ is Einstein exactly when ${\cal
B}=0$. It is called anti-self-dual (self-dual), if ${\mathcal
W}_{+}=0$ (resp. ${\mathcal W}_{-}=0$). By a famous result of
Atiyah-Hitchin-Singer \cite{AHS}, the anti-self-duality
(self-duality) condition is necessary and sufficient for
integrability of a naturally defined almost complex structure on
${\mathcal Z}_{+}$ (resp., ${\mathcal Z}_{-}$).

\section{Riemannian almost product structure on the product bundle ${\mathcal Z}_{+}\times {\mathcal Z}_{-}$}

Let ${\mathscr P}={\mathcal Z}_{+}\times{\mathcal Z}_{-}$ be the
product bundle over $M$ of the bundles ${\mathcal Z}_{\pm}$.

\smallskip

The projection to $M$ of the vector bundle $\Lambda^2_{+}TM\oplus
\Lambda^2_{-}TM$ will be denoted by $\pi$ and we shall use the same
symbol for its restriction to the subbundle ${\mathscr P}={\mathcal
Z}_{+}\times{\mathcal Z}_{-}$. By abuse of notation, the direct sum
of the connections $\nabla|\Lambda^2_{\pm}$ will also be denoted by
$\nabla$.

\smallskip

Let $\varkappa=(\sigma^{+},\sigma^{-})\in {\mathscr P}$ and
$p=\pi(\varkappa)$. Take sections $\phi^{\pm}$ of $\Lambda^2_{\pm}$
such that $\phi^{\pm}(p)=\sigma^{\pm}$ and $\nabla\phi^{\pm}|_p=0$.
Then
$\Phi=(\frac{\phi^{+}}{||\phi^{+}||},\frac{\phi^{-}}{||\phi^{-}||})$
is a section of $\Lambda^2_{+}TM\oplus\Lambda^2_{-}TM$ taking values
in ${\mathscr P}$ and such that $\Phi(p)=\varkappa$,
$\nabla\Phi|_p=0$. Hence the horizontal space ${\mathcal
H}_{\varkappa}=\Phi_{\ast}(T_pM)$ of connection $\nabla$ on
$\Lambda^2_{+}TM\oplus\Lambda^2_{-}TM$ at $\varkappa$ is tangent to
the submanifold ${\mathscr P}$. Thus, we have the decomposition
$T_{\varkappa}{\mathscr P}={\mathcal H}_{\varkappa}\oplus {\mathcal
V}_{\varkappa}$ into horizontal and vertical parts, where the
vertical space ${\mathcal V}_{\varkappa}$ of the bundle ${\mathscr
P}\to M$ is clearly the product of the vertical spaces ${\mathcal
V}_{\sigma^{\pm}}$ of the bundles ${\mathcal Z}_{\pm}\to M$. This
decomposition allows one to define four almost product structures
${\mathcal K}_{\nu}$ on ${\mathscr P}$, $\nu=1,..,4$, setting
$$
\begin{array}{c}
{\mathcal K}_{\nu}X^h_{\varkappa}=(K_{\sigma^{+}}\circ K_{\sigma^{-}}X)^h_{\varkappa}\quad {\rm{for}}\quad X\in T_pM,\\[6pt]

{\mathcal K}_{1}(V^{+},V^{-})=(V^{+},V^{-}),\quad {\mathcal K}_{2}(V^{+},V^{-})=(V^{+},-V^{-}),\\[8pt]
{\mathcal K}_{3}(V^{+},V^{-})=-(V^{+},V^{-}),\quad {\mathcal K}_{4}(V^{+},V^{-})=(-V^{+},V^{-})\\[8pt]
{\rm{for}}\quad (V^{+},V^{-})\in{\mathcal V}_{\varkappa}.
\end{array}
$$
It is convenient to set $\varepsilon_1=\varepsilon_2=1$,
$\varepsilon_3=\varepsilon_4=-1$, so that
$$
{\mathcal
K}_{\nu}(V)=\varepsilon_{\nu}(V^{+},(-1)^{\nu+1}V^{-}),\quad
V=(V^{+},V^{-})\in{\mathcal V}_{\varkappa},\quad \nu=1,...,4.
$$

 Clearly ${\mathcal K}_{\nu}^2=Id$. The endomorphism $P_{\varkappa}=K_{\sigma^{+}}\circ K_{\sigma^{-}}$ of $T_pM$ is
an involution different form $\pm Id$ and its $\pm 1$ eigenspaces
are invariant under $K_{\sigma^{+}}$ and $K_{\sigma^{-}}$. Hence  we
can find an oriented orthonormal basis $E_1,...,E_4$ of $T_pM$ such
that $K_{\sigma^{+}}E_1=K_{\sigma^{-}}E_1=E_2$ and
$K_{\sigma^{+}}E_3=-K_{\sigma^{-}}E_3=E_4$. Then
$P_{\varkappa}E_i=-E_i$ for $i=1,2$ and $P_{\varkappa}E_j=E_j$ for
$j=3,4$. Therefore the dimensions of the $(+1)$  and
$(-1)$-eigenspaces of ${\mathcal K}_{1}, ..., {\mathcal K}_{4}$ are
$(6,2)$, $(4,4)$, $(2,6)$, $(4,4)$, respectively. Thus, ${\mathcal
K}_{\nu}$, $\nu=1,...,4$, are almost product structures on the
manifold ${\mathscr P}$.

\medskip

 For ${\bf t}=(t_1,t_2)$ with $t_1>0$, $t_2>0$, define a
$2$-parameter family of Riemannian metrics $G_{\bf t}$ on ${\mathscr
P}$ by
$$
G_{\bf t}(X^h+V,
Y^h+W)_{\varkappa}=g(X,Y)+t_1g(V^{+},W^{+})+t_2g(V^{-},W^{-}),
$$
where $X,Y\in T_{\pi(\varkappa)}M$ and $V=(V^{+},V^{-}),
W=(W^{+},W^{-})\in {\mathcal V}_{\varkappa}$.

Clearly, the projection $\pi: ({\mathscr P}, G_{\bf t})\to (M,g)$ is
a Riemannian submersion. Moreover,  the almost product structures
${\mathcal K}_{\nu}$ are compatible with every metric $G_{\bf t}$.


\smallskip
Let $({\mathscr U},x_1,...,x_4)$ be a local coordinate system of
$M$, and let $(E_1,...,E_4)$ be an oriented orthonormal frame of
$TM$ on ${\mathscr U}$. If $s_i^{\pm}$, $i=1,2,3$, are the local
frames of $\Lambda^2_{\pm}TM$ define by (\ref{s-basis}), for
$\varkappa=(\sigma^{+},\sigma^{-})\in \pi^{-1}({\mathscr U})$, set
$\widetilde x_{\alpha}=x_{\alpha}\circ\pi$,
$y^{\pm}_j(\varkappa)=g(\sigma^{\pm},
(s_j^{\pm}\circ\pi)(\varkappa))$, $1\leq \alpha \leq 4$, $1\leq
j\leq 3$. Then $\{\widetilde x_{\alpha},y^{+}_j,y^{-}_j\}$ are local
coordinates of the manifold $\Lambda^2_{+}TM\oplus \Lambda^2_{-}TM$
on $\pi^{-1}({\mathscr U})$.

   The horizontal lift $X^h$ on $\pi^{-1}({\mathscr U})$ of a vector field
$$
X=\sum_{\alpha=1}^4 X^{\alpha}\frac{\partial}{\partial x_{\alpha}}
$$
is given by
\begin{equation}\label{hl}
\begin{array}{c}
X^h=\displaystyle{\sum\limits_{\alpha=1}^4 (X^{\alpha}\circ\pi)\frac{\partial}{\partial \widetilde{x}_{\alpha}}}\\[8pt]
-\displaystyle{\sum\limits_{j,k=1}^3y^{+}_j(g(\nabla_{X}s^{+}_j,s^{+}_k)\circ\pi)\frac{\partial}{\partial
y^{+}_k}
-\sum\limits_{j,k=1}^3y^{-}_j(g(\nabla_{X}s^{-}_j,s^{-}_k)\circ\pi)\frac{\partial}{\partial
y^{-}_k}}.
\end{array}
\end{equation}
Hence
\begin{equation}\label{Lie-1}
\begin{array}{c}
[X^h,Y^h]=[X,Y]^h\\[8pt]
+\displaystyle{}\sum\limits_{j,k=1}^3y_j^{+}(g(R(X\wedge
Y)s_j^{+},s_k^{+})\circ\pi)\frac{\partial}{\partial y_k^{+}}
+\sum\limits_{j,k=1}^3y_j^{-}(g(R(X\wedge
Y)s_j^{-},s_k^{-})\circ\pi)\frac{\partial}{\partial y_k^{-}}.
\end{array}
\end{equation}
for every vector fields $X,Y$ on ${\mathscr U}$. Using the standard
identification
$$
T_{\omega}(\Lambda^2_{\pm}T_{\pi(\omega)}M)\cong
\Lambda^2_{\pm}T_{\pi(\omega)}M, \quad \omega\in{\mathcal Z}_{\pm},
$$
we obtain from (\ref{Lie-1}) the well-known formula
\begin{equation}\label{Lie-2}
[X^h,Y^h]_{\varkappa}=[X,Y]^h_{\varkappa}+R_{p}(X\wedge Y)\varkappa,
\quad \varkappa=(\sigma^{+},\sigma^{-})\in{\mathcal
Z}_{+}\times{\mathcal Z}_{-}, \quad p=\pi(\varkappa),
\end{equation}
where $R_{p}(X\wedge Y)\varkappa=(R_{p}(X\wedge Y)\sigma^{+},
R_{p}(X\wedge Y)\sigma^{-})\in{\mathcal V}_{\varkappa}={\mathcal
V}_{\sigma^{+}}\times{\mathcal V}_{\sigma^{-}}$.

\smallskip

Note also that it follows from (\ref{eq 2.5}) that if
$\varkappa=(\sigma^{+},\sigma^{-})\in{\mathcal Z}_{+}\times{\mathcal
Z}_{-}$ and $V=(V^{+},V^{-})\in{\mathcal V}_{\varkappa}$, $X,Y\in
T_{\pi(\varkappa)}M$,
\begin{equation}\label{RJ,V}
G_{\bf t}(R(X,Y)\varkappa,V)=g({\mathcal R}(t_1\sigma^{+}\times
V^{+} - t_2\sigma^{-}\times V^{-}),X\wedge Y).
\end{equation}

 For any (local) section $a=(a^{+},a^{-})$ of $\Lambda^2_{+}TM\oplus \Lambda^2_{-}TM$, 
denote by $\widetilde a=(\widetilde a^{+},\widetilde a^{-})$ the
vertical vector field on ${\mathscr P}$ defined by
\begin{equation}\label{tilde a}
\widetilde
a^{\pm}_{\varkappa}=a^{\pm}(p)-g(a^{\pm}(p),\sigma^{\pm})\sigma^{\pm},
\quad \varkappa=(\sigma^{+},\sigma^{-}), \quad p=\pi(\varkappa).
\end{equation}

\smallskip

Note that for every $\varkappa\in {\mathscr P}$ we can find sections
$a_1,...,a_4$ of $\Lambda^2_{+}TM\oplus \Lambda^2_{-}TM$ near the
point $p=\pi(\varkappa)$ such that $\widetilde a_1,...,\widetilde
a_4$ form a basis of the vertical vector space at each point in a
neighbourhood of $\varkappa$.

\vspace{0.1cm}

The next lemma is a kind of folklore appearing in different contexts
(cf, for example, \cite{G,D05}).

\begin{lemma}\label{Lie-hor-ver}
Let $X$ be a vector field on $M$ and let $a=(a^{+},a^{-})$ be a
section of $\Lambda^2_{+}TM\oplus \Lambda^2_{-}TM$  defined on a
neighbourhood of the point $p=\pi(\varkappa)$,
$\varkappa\in{\mathscr P}$. Then:
$$
[X^h,\widetilde{a}]_{\varkappa}=\widetilde{(\nabla_X a)}_{\varkappa}
$$
\end{lemma}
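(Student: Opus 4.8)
The plan is to compute the Lie bracket $[X^h,\widetilde a]$ in the local coordinates $\{\widetilde x_\alpha, y^+_j, y^-_j\}$ introduced before the lemma, working on the ambient manifold $\Lambda^2_+TM\oplus\Lambda^2_-TM$ and then checking tangency to $\mathscr P$. First I would express $X^h$ via the explicit formula \eqref{hl} and write $\widetilde a$ in coordinates. Since $\widetilde a$ is vertical, it has no $\partial/\partial\widetilde x_\alpha$-component, so its coordinate expression involves only the $\partial/\partial y^\pm_k$ derivatives; using \eqref{tilde a} and the identifications $y^\pm_j(\varkappa)=g(\sigma^\pm,s^\pm_j\circ\pi)$, I would write the components of $\widetilde a^\pm$ relative to the frame $\{s^\pm_k\}$ as $g(a^\pm,s^\pm_k)\circ\pi - \big(\sum_l y^\pm_l\, g(a^\pm,s^\pm_l)\circ\pi\big) y^\pm_k$. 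The whole computation then reduces to applying the bracket of a horizontal and a vertical coordinate field and collecting terms.

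Next I would carry out the bracket term by term. The horizontal field $X^h$ acts on the functions appearing as coefficients of $\widetilde a$; the key point is that the base-dependent factors $g(a^\pm,s^\pm_k)\circ\pi$ get differentiated along $X$, producing $g(\nabla_X a^\pm,s^\pm_k)+g(a^\pm,\nabla_X s^\pm_k)$ by the Leibniz rule for $\nabla$. Simultaneously, the Christoffel-type terms in \eqref{hl} (the $y^+_j g(\nabla_X s^+_j,s^+_k)$ pieces) act on the $y^\pm$-dependent factors of $\widetilde a$. My expectation is that the connection-coefficient contributions coming from differentiating the frame $\{s^\pm_k\}$ cancel precisely against the $g(a^\pm,\nabla_X s^\pm_k)$ terms, leaving exactly the $\nabla_X a$ contribution. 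In other words, the combination $X^h$ is engineered so that its action on vertical fields is the covariant derivative along $X$, projected to the vertical; this is the conceptual reason the lemma holds, and it matches the construction used just before the lemma where a section $\Phi$ with $\nabla\Phi|_p=0$ trivializes the connection at the point.

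A cleaner way to organize the same argument, which I would prefer in the write-up, is to work invariantly at the single point $\varkappa$. Choose the section $\Phi$ (equivalently, parallel frames $s^\pm_k$ with $\nabla s^\pm_k|_p=0$) so that all Christoffel symbols vanish at $p$. Then at $\varkappa$ the horizontal lift $X^h$ reduces to the coordinate field $\sum_\alpha (X^\alpha\circ\pi)\,\partial/\partial\widetilde x_\alpha$, and differentiating the coordinate expression of $\widetilde a$ along it gives only the $\nabla_X a$ term since $\nabla_X s^\pm_k|_p=0$ kills the remaining pieces. Because the bracket is tensorial in the point-value and the formula \eqref{tilde a} defining $\widetilde{(\nabla_X a)}$ matches what survives, the identity follows at $\varkappa$, and since $\varkappa$ was arbitrary it holds everywhere. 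One must also verify that the resulting vector is genuinely the vertical field $\widetilde{(\nabla_X a)}$ and not merely its value at $\varkappa$; this is immediate because both sides are evaluated pointwise and $\varkappa$ ranges over all of $\mathscr P$.

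The main obstacle is purely bookkeeping: tracking the interaction between the $y^\pm$-dependence of $\widetilde a$ (through the normalizing term $g(a^\pm,\sigma^\pm)\sigma^\pm$) and the derivative terms in $X^h$, and confirming that the projection onto the vertical space ${\mathcal V}_\varkappa$ is respected so that the right-hand side is exactly $\widetilde{(\nabla_X a)}$ as defined by \eqref{tilde a} rather than $\nabla_X a$ itself. Using the parallel-frame normalization at the base point sidesteps the bulk of this, reducing the proof to a short verification; I would present that version to keep the argument transparent.
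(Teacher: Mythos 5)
Your proposal is correct and, in the version you say you would actually write up, it is essentially the paper's own proof: pick a frame $(E_1,\dots,E_4)$ with $\nabla E_\alpha|_p=0$ so that $\nabla s_i^{\pm}|_p=0$, whence at $\varkappa$ the vertical (Christoffel-type) part of $X^h$ vanishes along the whole fibre over $p$, the bracket reduces to $X^h_{\varkappa}$ applied to the coordinate coefficients of $\widetilde a$, and the outcome coincides with the coordinate expression of $\widetilde{(\nabla_X a)}_{\varkappa}$ given by (\ref{tilde a}). The fact you invoke as ``$\nabla s_k^{\pm}|_p=0$ kills the remaining pieces'' is exactly what the paper records as $[X^h,\partial/\partial y_i^{\pm}]_{\varkappa}=0$, so the two arguments are the same.
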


\begin{proof}  Fix a point $p\in M$, take  an oriented orthonormal
frame $(E_1,...,E_4)$ of $TM$ such that $\nabla E_i|_p=0$, and
define $s_i^{\pm}$, $i=1,2,3$,  by (\ref{s-basis}). Set
$a^{\pm}=\sum\limits_{i=1}^3 a_i^{\pm}s_i^{\pm}$. Then, in the local
coordinates of $\Lambda^2_{+}TM\oplus \Lambda^2_{-}TM$ introduced
above,
$$
\widetilde a=\sum_{i=1}^3[\widetilde
a_{i}^{+}\frac{\partial}{\partial y_{i}^{+}}+\widetilde
a_{i}^{-}\frac{\partial}{\partial y_{i}^{-}}],
$$
where
$$
\widetilde
a^{\pm}=\sum\limits_{i=1}^3[a_i^{\pm}\circ\pi-y_i^{\pm}\sum\limits_{j=1}^3y_j^{\pm}(a_j^{\pm}\circ\pi)]\frac{\partial}{\partial
y_i^{\pm}}
$$
Let us also note that for every vector field $X$ on $M$ near the
point $p$, we have by (\ref{hl})
$$
X_{\varkappa}^h=\displaystyle{\sum_{\alpha=1}^{4}
X^{\alpha}(p)\frac{\partial}{\partial\tilde
x_{\alpha}}(\varkappa)},\quad
\displaystyle{[X^h,\frac{\partial}{\partial
y_{i}^{\pm}}}]_{\varkappa}=0,\quad i=1,2,3,
$$
since $\nabla s_{i}^{\pm}|_p=0$, $i=1,2,3$. Hence
$$
\begin{array}{r}
[X^h,\widetilde
a]_{\varkappa}=\sum\limits_{i=1}^3[X(a^{+}_i)-y^{+}_i(\varkappa)\sum\limits_{j=1}^3y^{+}_j(\varkappa)X_p(a^{+}_j)]\displaystyle{\frac{\partial}{\partial
 y^{+}_{j}}(\varkappa)}\\[10pt]
+\sum\limits_{i=1}^3[X(a^{-}_i)-y^{-}_i(\varkappa)\sum\limits_{j=1}^3y^{-}_j(\varkappa)X_p(a^{-}_j)]\displaystyle{\frac{\partial}{\partial
 y^{-}_{j}}(\varkappa)}.
\end{array}
$$
On the other hand, considering $\widetilde a$ as a section of
$\Lambda^2_{+}TM\oplus \Lambda^2_{-}TM$, we have for $X\in T_pM$
$$
\begin{array}{r}
D_{X}\widetilde
a=\sum\limits_{i=1}^3[X(a^{+}_i)-y^{+}_i(\varkappa)\sum\limits_{j=1}^3y^{+}_j(\varkappa)X_p(a^{+}_j)]s_i^{+}(p)\\[8pt]
+\sum\limits_{i=1}^3[X(a^{-}_i)-y^{-}_i(\varkappa)\sum\limits_{j=1}^3y^{-}_j(\varkappa)X_p(a^{-}_j)]s_i^{-}(p).
\end{array}
$$

This proves the lemma.
\end{proof}

Denote by $D$ the Levi-Civita connection of $({\mathcal P},G_{\bf
t})$.

\smallskip

Let $\varkappa=(\sigma^{+},\sigma^{-})\in{\mathscr P}$ and
$p=\pi(\varkappa)$. As we have noticed, we can find an oriented
orthonormal basis $(E_1,...,E_4)$ of $T_pM$ such that
$\sigma^{\pm}=E_1\wedge E_2 \pm E_3\wedge E_4$. Extend this basis to
an oriented orthonormal frame of vector fields in a neighbourhood of
$p$ such that $\nabla E_{\alpha}|_p=0$, ${\alpha}=1,...,4$. Define
$s^{\pm}_i$, $i=1,2,3$,  by (\ref{s-basis}), so that
$s_1^{\pm}(p)=\sigma^{\pm}$ and $\nabla s^{\pm}_i|_p=0$. The
vertical vector fields  $\widetilde{a}_1,...,\widetilde{a}_4$
determined  by the sections $a_1=(s_2^{+},0)$, $a_2=(s_3^{+},0)$,
$a_3=(0,s_2^{-})$, $a_4=(0,s_3^{-})$ of $\Lambda^2_{+}TM\oplus
\Lambda^2_{-}TM$ form a frame of the vertical bundle ${\mathcal V}$
of ${\mathscr P}$ in a neighbourhood of $\varkappa$. Let $V\in
{\mathcal V}_{\varkappa}$ and let $v$ be a section of
$\Lambda^2_{+}TM\oplus \Lambda^2_{-}TM$ such that $v(p)=V$ and
$\nabla v|_p=0$. Denote by $\widetilde v$ the vertical vector field
corresponding to this section. By Lemma~\ref{Lie-hor-ver},
$[X^h,\widetilde{a}_{l}]_{\varkappa}=
[X^h,\widetilde{v}]_{\varkappa}=0$, $l=1,...,4$,  for every vector
field $X$ in a neighbourhood of $p$.  It follows from the Koszul
formula for the Levi-Civita connection that the vectors
$(D_{\widetilde{v}}\widetilde{a}_{l})_{\varkappa}$ for all
$l=1,...,4$ are $G_{\bf t}$-orthogonal to every horizontal vector
$X^h_{\varkappa}$. Hence $D_{V}\widetilde{a}_{l}$ are vertical
tangent vectors of ${\mathscr P}$ at $\varkappa$. It follows that,
for every vertical vector field $W$, $D_{V}W$ is a vertical vector
field. Thus, the fibres of ${\mathscr P}$ are totally geodesic
submanifolds. This, of course, follows also from the Vilms theorem
(see, for example, \cite[Theorem 9.59]{Besse}.

The proof of the following lemma is practically given  in
\cite{DM91,D05}) and we present it here just for completeness.

\begin{lemma}\label{LC} If $X,Y$ are vector
fields on $M$ and $V=(V^{+},V^{-})$ is a vertical vector field on
${\mathscr P}$, then
\begin{equation}\label{D-hh}
(D_{X^h}Y^h)_{\varkappa}=(\nabla_{X}Y)^h_{\varkappa}+\frac{1}{2}R(X,Y)\varkappa.
\end{equation}
\begin{equation}\label{D-vh}
(D_{V}X^h)_{\varkappa}={\mathcal
H}(D_{X^h}V)_{\varkappa}=-\frac{1}{2}(R_{p}(t_1\sigma^{+}\times
V^{+}-t_2\sigma^{-}\times V^{-})X)_{\varkappa}^h
\end{equation}
where $\varkappa=(\sigma^{+},\sigma^{-})\in {\mathscr P}$,
$p=\pi(\varkappa)$, and ${\cal H}$ means "the horizontal component".
\end{lemma}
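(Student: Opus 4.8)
The plan is to derive both formulas from the Koszul formula for the Levi-Civita connection $D$ of $({\mathscr P},G_{\bf t})$, exploiting the bracket relations established above and the fact that $\pi$ is a Riemannian submersion with totally geodesic fibres.

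First I would prove \eqref{D-hh}. Since $\pi$ is a Riemannian submersion, the horizontal distribution is the orthogonal complement of the fibres, and the O'Neill formulas apply. The horizontal part of $D_{X^h}Y^h$ is the horizontal lift of $\nabla_X Y$ because the fundamental tensor of a Riemannian submersion reproduces the base connection on horizontal lifts of basic fields. The vertical part is governed by O'Neill's tensor $A$, which by the standard identity equals $\tfrac12\,\mathcal V[X^h,Y^h]$; by \eqref{Lie-2} this vertical component of the bracket is precisely $R_p(X\wedge Y)\varkappa = R(X,Y)\varkappa$, giving the factor $\tfrac12 R(X,Y)\varkappa$. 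Concretely, I would run the Koszul formula on $G_{\bf t}(D_{X^h}Y^h, Z^h)$ using that the metric of basic horizontal fields is $g(X,Y)\circ\pi$ (constant along fibres) to recover $(\nabla_X Y)^h$, and then compute $G_{\bf t}(D_{X^h}Y^h, W)$ against a vertical $W$, where the bracket terms collapse via Lemma~\ref{Lie-hor-ver} (which kills $[X^h,\widetilde a]$ at $\varkappa$) leaving only the contribution of $\mathcal V[X^h,Y^h]_\varkappa = R(X,Y)\varkappa$, distributed with weight one half.

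Next I would prove \eqref{D-vh}. The equality ${\mathcal H}(D_{X^h}V)_\varkappa = (D_V X^h)_\varkappa$ is immediate from $D_{X^h}V - D_V X^h = [X^h,V]$ together with the computation, valid at $\varkappa$ via Lemma~\ref{Lie-hor-ver}, that $[X^h,V]_\varkappa$ is vertical (it equals $\widetilde{(\nabla_X v)}_\varkappa$ for the chosen section $v$), so the two horizontal components agree. To evaluate the common horizontal vector, I would pair it with an arbitrary $Y^h_\varkappa$ and use $G_{\bf t}(D_V X^h, Y^h) = -G_{\bf t}(X^h, D_V Y^h)$ is not directly available, so instead I would apply the O'Neill relation $G_{\bf t}(D_V X^h, Y^h) = -G_{\bf t}(V, A_{X^h}Y^h)$ with $A_{X^h}Y^h = \tfrac12\mathcal V[X^h,Y^h]$. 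By \eqref{Lie-2} this is $\tfrac12 G_{\bf t}(V, R(X,Y)\varkappa)$, and then \eqref{RJ,V} converts the right-hand side into $\tfrac12\, g(\mathcal R(t_1\sigma^+\times V^+ - t_2\sigma^-\times V^-), X\wedge Y)$. Finally I would read this bilinear form in $Y$ back through \eqref{eq 2.5} to identify the horizontal vector as the lift of $-\tfrac12 R_p(t_1\sigma^+\times V^+ - t_2\sigma^-\times V^-)X$, matching the claimed sign.

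The main obstacle I anticipate is bookkeeping of signs and of the metric weights $t_1,t_2$: the vertical components carry the rescaled inner products $t_1 g(\cdot,\cdot)$ and $t_2 g(\cdot,\cdot)$, so the O'Neill pairing and the passage through \eqref{RJ,V} and \eqref{eq 2.5} must track these factors consistently, and the curvature-sign conventions (note the nonstandard definition $R(X,Y)=\nabla_{[X,Y]}-[\nabla_X,\nabla_Y]$ and the $\pm$ in \eqref{eq 2.5}) must be applied uniformly on both the positive and negative twistor factors. Everything else is a direct application of the Koszul and O'Neill formulas once the brackets are reduced by Lemma~\ref{Lie-hor-ver}.
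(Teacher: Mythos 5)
Your proposal is correct and is essentially the paper's own argument: the paper likewise obtains \eqref{D-hh} from the Koszul formula together with \eqref{Lie-1} and Lemma~\ref{Lie-hor-ver}, deduces that $D_V X^h$ is horizontal and equals ${\mathcal H}D_{X^h}V$ from the totally geodesic fibres and the verticality of $[X^h,V]$, and then derives \eqref{D-vh} by pairing with $Y^h_{\varkappa}$ and invoking \eqref{RJ,V}. Your O'Neill-tensor phrasing ($A_{X^h}Y^h=\tfrac12{\mathcal V}[X^h,Y^h]$ plus the skew-symmetry of $A_{X^h}$) is merely a packaged form of the paper's direct computation $G_{\bf t}(D_{X^h}V,Y^h)_{\varkappa}=-G_{\bf t}(V,D_{X^h}Y^h)_{\varkappa}$, so the two proofs coincide in substance.
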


\begin{proof}
The Koszul formula, identity (\ref{Lie-1}), and
Lemma~\ref{Lie-hor-ver} imply
$$
(D_{X^h}Y^h)_{\varkappa}=(\nabla_{X}Y)^h_{\varkappa}+\frac{1}{2}R(X,Y)\varkappa.
$$
Next, $D_{V}X^h$ is orthogonal to any vertical vector field $W$
since $D_{V}W$ is a vertical vector field. Thus $D_{V}X^h$ is a
horizontal vector field. Hence $D_{V}X^h={\cal H}D_{X^h}V$ since
$[V,X^h]$ is a vertical vector field. Therefore
\begin{equation}\label{v-hh}
\begin{array}{c}
G_{\bf t}(D_{V}X^h,Y^h)_{\varkappa}=G_{\bf t}(D_{X^h}V,Y^h)_{\varkappa}=-G_{\bf t}(V,D_{X^h}Y^h)_{\varkappa}\\[6pt]
=-\displaystyle{\frac{1}{2}} G_{\bf t}(R(X,Y)\varkappa,V)
\end{array}
\end{equation}
Thus, (\ref{D-vh})  follows from (\ref{RJ,V}).
\end{proof}

Set
$$
F_{\bf t,\nu}(A,B)=G_{\bf t}({\mathcal K}_{\nu}A,B),\quad A,B\in
T{\mathscr P}.
$$

\begin{cor}\label{cor-DFhhv}
Let $\varkappa=(\sigma^{+},\sigma^{-})\in{\mathscr P}$, $X,Y\in
T_{\pi(\varkappa)}M$, $V\in{\mathcal V}_{\varkappa}$. Then
$$
(D_{X^h_{\varkappa}}F_{\bf t,\nu})(Y^h,U)=-\frac{1}{2}G_{\bf
t}({\mathcal K}_{\nu}R(X,Y)\varkappa,U) +\frac{1}{2}G_{\bf
t}(R(X,P_{\varkappa}Y)\varkappa,U),
$$
where $P_{\varkappa}=K_{\sigma^{+}}\circ K_{\sigma^{-}}$.
\end{cor}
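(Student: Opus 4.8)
The plan is to compute the covariant derivative of $F_{\bf t,\nu}$ directly from its definition $F_{\bf t,\nu}(A,B)=G_{\bf t}({\mathcal K}_{\nu}A,B)$ using the standard formula
$$
(D_{X^h}F_{\bf t,\nu})(Y^h,U)=X^h\big(F_{\bf t,\nu}(Y^h,U)\big)-F_{\bf t,\nu}(D_{X^h}Y^h,U)-F_{\bf t,\nu}(Y^h,D_{X^h}U).
$$
Since $G_{\bf t}$ is the Levi-Civita metric, $D G_{\bf t}=0$, so it is cleaner to rewrite this as
$$
(D_{X^h}F_{\bf t,\nu})(Y^h,U)=G_{\bf t}\big((D_{X^h}{\mathcal K}_{\nu})Y^h,U\big),
$$
and then expand $(D_{X^h}{\mathcal K}_{\nu})Y^h=D_{X^h}({\mathcal K}_{\nu}Y^h)-{\mathcal K}_{\nu}(D_{X^h}Y^h)$. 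Here $U$ is a vertical vector; I would evaluate everything at the fixed point $\varkappa$ and use the adapted frame and section data prepared just before Lemma~\ref{LC}, so that $\nabla E_\alpha|_p=0$, $\sigma^\pm=E_1\wedge E_2\pm E_3\wedge E_4$, and both $Y^h$ and $U$ arise from sections whose connection derivatives vanish at $p$.

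The two ingredients are then Lemma~\ref{LC} and the definition of ${\mathcal K}_{\nu}$. First I compute ${\mathcal K}_{\nu}Y^h=(K_{\sigma^+}\circ K_{\sigma^-}Y)^h=(P_\varkappa Y)^h$ by definition, so
$$
D_{X^h}({\mathcal K}_{\nu}Y^h)_{\varkappa}=D_{X^h}(P Y)^h_{\varkappa}=(\nabla_X(PY))^h_{\varkappa}+\tfrac{1}{2}R(X,PY)\varkappa
$$
using \eqref{D-hh}, where $P$ denotes the pointwise-extended involution. The horizontal part $(\nabla_X(PY))^h$ will be killed when I pair against the vertical $U$, so only the curvature term $\tfrac12 R(X,P_\varkappa Y)\varkappa$ survives in $G_{\bf t}(D_{X^h}({\mathcal K}_{\nu}Y^h),U)$. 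Second, for ${\mathcal K}_{\nu}(D_{X^h}Y^h)$, I use \eqref{D-hh} again to write $D_{X^h}Y^h=(\nabla_XY)^h+\tfrac12 R(X,Y)\varkappa$; applying ${\mathcal K}_{\nu}$ sends the horizontal part to $(P_\varkappa\nabla_XY)^h$ (again annihilated by pairing with $U$) and acts on the vertical curvature term $\tfrac12 R(X,Y)\varkappa$ by the linear-algebra rule $\varepsilon_\nu$ in the definition. Pairing against the vertical $U$ then yields precisely $-\tfrac12 G_{\bf t}({\mathcal K}_{\nu}R(X,Y)\varkappa,U)+\tfrac12 G_{\bf t}(R(X,P_\varkappa Y)\varkappa,U)$.

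The main point requiring care — and the step I expect to be the only genuine obstacle — is justifying that $X^h(F_{\bf t,\nu}(Y^h,U))$ and the mixed derivative terms behave as claimed at $\varkappa$, i.e.\ that I may legitimately reduce to the curvature contributions alone. Concretely I must verify that the horizontal-horizontal pairing $G_{\bf t}({\mathcal K}_{\nu}Y^h,U)=F_{\bf t,\nu}(Y^h,U)$ and its derivatives involving $U$ vertical do not produce stray terms from the variation of ${\mathcal K}_{\nu}$ along the fibre or from $D_{X^h}U$. This is handled by choosing $U$ to extend to a vertical field $\widetilde v$ with $\nabla v|_p=0$ as in the setup before Lemma~\ref{LC}; then $[X^h,\widetilde v]_\varkappa=0$ by Lemma~\ref{Lie-hor-ver}, and $D_{X^h}U$ at $\varkappa$ equals $D_U X^h$ which is horizontal by \eqref{D-vh}, so $F_{\bf t,\nu}(Y^h,D_{X^h}U)$ pairs a horizontal $(P_\varkappa Y)^h$-type vector against a horizontal vector and contributes only through the metric — but this contribution is exactly absorbed into the $D G_{\bf t}=0$ bookkeeping above. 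Once this reduction is clean, the remaining computation is the routine application of \eqref{D-hh} and the definition of ${\mathcal K}_{\nu}$ described above.
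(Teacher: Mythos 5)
Your proposal reaches the correct formula and runs on the same two ingredients as the paper's proof, namely Lemma~\ref{LC} and the $G_{\bf t}$-orthogonality of ${\mathcal H}$ and ${\mathcal V}$. The packaging differs only slightly: the paper expands $(D_{X^h_{\varkappa}}F_{\bf t,\nu})(Y^h,U)$ directly, drops the term $X^h(F_{\bf t,\nu}(Y^h,\widetilde v))$ because the function $G_{\bf t}({\mathcal K}_{\nu}Y^h,\widetilde v)$ vanishes identically (horizontal paired with vertical), and evaluates the two surviving terms $-G_{\bf t}({\mathcal K}_{\nu}D_{X^h}Y^h,U)$ and $-G_{\bf t}((P_{\varkappa}Y)^h,D_{X^h}U)$ by \eqref{D-hh} and \eqref{v-hh}; you instead pass to $G_{\bf t}((D_{X^h}{\mathcal K}_{\nu})Y^h,U)$ and differentiate the field ${\mathcal K}_{\nu}Y^h$ itself. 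The two formulations are equivalent by metric compatibility, so in spirit this is the same argument.

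The one step that genuinely needs repair is your computation of $D_{X^h}({\mathcal K}_{\nu}Y^h)$. You write ${\mathcal K}_{\nu}Y^h=(P_{\varkappa}Y)^h$ ``by definition'' and apply \eqref{D-hh}, but \eqref{D-hh} is a statement about horizontal lifts of vector fields on $M$, and ${\mathcal K}_{\nu}Y^h$ is \emph{not} such a lift: its value at a nearby point $\varkappa'=(\tau^{+},\tau^{-})$ is $(K_{\tau^{+}}K_{\tau^{-}}Y)^h_{\varkappa'}$, so the underlying endomorphism varies along the fibres of ${\mathscr P}$ -- this fibre variation is exactly what produces the nonzero term $g_p((K_{\sigma^{-}}K_{U^{+}}+K_{\sigma^{+}}K_{U^{-}})Y,Z)$ in Lemma~\ref{D-F}(iii), so it cannot be defined away. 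The repair is available from the data you already set up: let $S=(s_1^{+},s_1^{-})$ be the section with $S(p)=\varkappa$ and $\nabla S|_p=0$, and put $Z=K_{s_1^{+}}K_{s_1^{-}}Y$, a genuine vector field on $M$. Then ${\mathcal K}_{\nu}Y^h$ and $Z^h$ coincide along the image of $S$, and since $\nabla S|_p=0$ this image is tangent to ${\mathcal H}_{\varkappa}$, hence contains a curve with initial velocity $X^h_{\varkappa}$; therefore $D_{X^h_{\varkappa}}({\mathcal K}_{\nu}Y^h)=D_{X^h_{\varkappa}}Z^h$, and now \eqref{D-hh} legitimately gives the vertical part $\frac{1}{2}R(X,P_{\varkappa}Y)\varkappa$. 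Note this works only because the differentiation direction is horizontal; in a vertical direction the same identification would produce a wrong answer. Alternatively, you can avoid differentiating ${\mathcal K}_{\nu}Y^h$ altogether, as the paper does, by keeping the term $-G_{\bf t}({\mathcal K}_{\nu}Y^h,D_{X^h}\widetilde v)_{\varkappa}=-G_{\bf t}((P_{\varkappa}Y)^h,D_U X^h)_{\varkappa}$ and evaluating it with \eqref{v-hh} and \eqref{RJ,V}; you already have all the pieces for this, since $[X^h,\widetilde v]_{\varkappa}=0$ and $D_{X^h}\widetilde v=D_U X^h$ is horizontal. (A minor slip: $G_{\bf t}({\mathcal K}_{\nu}Y^h,U)$ is a horizontal--vertical pairing, not ``horizontal--horizontal''; its identical vanishing is precisely why $X^h(F_{\bf t,\nu}(Y^h,U))$ drops out.)
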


\begin{proof}
This follows from  the identity
$$
(D_{X^h_{\varkappa}}F_{\bf t,\nu})(Y^h,U)=-G_{\bf t}({\mathcal
K}_{\nu}D_{X^h_{\varkappa}}Y^h,U)-G_{\bf
t}((P_{\varkappa}Y)^h_{\varkappa},D_{X^h_{\varkappa}}U)
$$
and identities (\ref{D-hh}), and (\ref{v-hh}).

\end{proof}

\begin{lemma}\label{D-F}
Let $\varkappa=(\sigma^{+},\sigma^{_-})\in{\mathscr P}$, $X,Y,Z\in
T_{\pi(\varkappa)}M$, and $U,V,W\in{\mathcal V}_{\varkappa}$. Then:

\noindent
\begin{itemize}
\item [(i)]\quad $(D_{X^h_{\varkappa}}F_{\bf t,\nu})(Y^h,Z^h)=0$;
\bigskip
\item [(ii)]
$
\begin{array}{c}
(D_{X^h_{\varkappa}}F_{\bf
t,\nu})(Y^h,U)=-\frac{1}{2}\varepsilon_{\nu}g({\mathcal
R}(t_1\sigma^{+}\times U^{+}+(-1)^{\nu}t_2\sigma^{-}\times
U^{-}),X\wedge
Y)\\[6pt]
+\frac{1}{2}g({\mathcal R}(t_1\sigma^{+}\times
U^{+}-t_2\sigma^{-}\times U^{-}),X\wedge
K_{\sigma^{+}}K_{\sigma^{-}}Y);
\end{array}
$
\bigskip

\item [(iii)]
\noindent $
\begin{array}{c}
\hspace{-2.5cm} (D_{U}F_{\bf t,\nu})(Y^h,Z^h)_{\varkappa}=g_p((K_{\sigma^{-}}K_{U^{+}}+K_{\sigma^{+}}K_{U^{-}})Y,Z)\\[6pt]
+\frac{1}{2}g({\mathcal R}(t_1\sigma^{+}\times
U^{+}-t_2\sigma^{-}\times U^{-}),Y\wedge
K_{\sigma^{+}}K_{\sigma^{-}}Z-K_{\sigma^{+}}K_{\sigma^{-}}Y\wedge
Z);
\end{array}
$
\bigskip
\item [(iv)]\quad $(D_{X^h_{\varkappa}}F_{\bf t,\nu})(U,V)=0$;

\bigskip
\item [(v)]\quad $(D_{U}F_{\bf t,\nu})(X^h,V)=0$;

\bigskip
\item [(vi)]\quad $(D_{U}F_{\bf t,\nu})(V,W)=0$;
\end{itemize}

\end{lemma}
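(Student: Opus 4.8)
My plan is to read off every component of $DF_{{\bf t},\nu}$ from the identity
\[
(D_A F_{{\bf t},\nu})(B,C)=A\big(G_{\bf t}({\mathcal K}_\nu B,C)\big)-G_{\bf t}({\mathcal K}_\nu D_AB,C)-G_{\bf t}({\mathcal K}_\nu B,D_AC)=G_{\bf t}\big((D_A{\mathcal K}_\nu)B,C\big),
\]
the last equality holding because ${\mathcal K}_\nu$ is $G_{\bf t}$-orthogonal and involutive, hence $G_{\bf t}$-symmetric. Throughout I fix $\varkappa=(\sigma^+,\sigma^-)$ and the adapted frame $(E_1,\dots,E_4)$ with $\nabla E_\alpha|_p=0$ and $\sigma^\pm=E_1\wedge E_2\pm E_3\wedge E_4$, so that the frames $s_i^\pm$ of (\ref{s-basis}) satisfy $\nabla s_i^\pm|_p=0$, and in particular $\nabla_X\sigma^\pm|_p=0$. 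The tools I will feed in are the connection formulas (\ref{D-hh}) and (\ref{D-vh}) of Lemma~\ref{LC}, the total geodesy of the fibres (so $D_UW$ is vertical for vertical $U,W$), the curvature identities (\ref{RJ,V}) and (\ref{eq 2.5}), and the commutation rules (\ref{com-anticom}).

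The four vanishing statements all reduce to a horizontal/vertical type count. For (iv) and (v) I show that, for vertical $U,V$, the vectors $(D_{X^h}{\mathcal K}_\nu)U$ and $(D_U{\mathcal K}_\nu)X^h$ are horizontal, so pairing them against the remaining vertical argument through $G_{\bf t}$ gives $0$. The ingredients are (\ref{D-vh}), by which $D_UX^h$, and more generally the vertical derivative of any horizontal-lift field, is horizontal, together with Lemma~\ref{Lie-hor-ver}: representing the vertical fields as $\widetilde a$ with $\nabla a|_p=0$ makes $[X^h,\widetilde a]_\varkappa=0$, hence $D_{X^h}\widetilde a|_\varkappa=D_{\widetilde a}X^h|_\varkappa$ is horizontal too, and ${\mathcal K}_\nu$ preserves horizontality and sends such $\widetilde a$ to fields of the same form. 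For (i) I compute the three terms of the identity directly: since $\nabla_X\sigma^\pm|_p=0$ one has $\nabla_XP_\varkappa|_p=0$ for $P_\varkappa=K_{\sigma^+}K_{\sigma^-}$, so the $X^h$-derivative of $G_{\bf t}({\mathcal K}_\nu Y^h,Z^h)=g(P_\varkappa Y,Z)$ equals $g(P_\varkappa\nabla_XY,Z)+g(P_\varkappa Y,\nabla_XZ)$, and this is cancelled exactly by the two connection terms coming from the horizontal parts $(\nabla_XY)^h,(\nabla_XZ)^h$ in (\ref{D-hh}), the vertical halves $\tfrac12R(X,\cdot)\varkappa$ being $G_{\bf t}$-orthogonal to the horizontal vectors they meet. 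For (vi) I use that the fibre ${\mathcal Z}_{+,p}\times{\mathcal Z}_{-,p}$ is a totally geodesic Riemannian product of two round spheres, so the factors ${\mathcal V}_{\sigma^+}$ and ${\mathcal V}_{\sigma^-}$ are $D$-parallel along the fibre; as ${\mathcal K}_\nu$ is a constant multiple of the identity on each, $(D_U{\mathcal K}_\nu)V=0$, and (vi) follows.

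Statement (ii) is then immediate: I apply ${\mathcal K}_\nu(V^+,V^-)=\varepsilon_\nu(V^+,(-1)^{\nu+1}V^-)$ to the vertical vector $R(X,Y)\varkappa$ in Corollary~\ref{cor-DFhhv} and rewrite the resulting inner products by (\ref{RJ,V}) and (\ref{eq 2.5}), whose sign is $+$ on $\Lambda^2_+$ and $-$ on $\Lambda^2_-$; this turns $-\tfrac12G_{\bf t}({\mathcal K}_\nu R(X,Y)\varkappa,U)$ into the first $\mathcal R$-term and $\tfrac12G_{\bf t}(R(X,P_\varkappa Y)\varkappa,U)$ into the second. Statement (iii) is the heart of the lemma. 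Its connection terms come from $D_UY^h,D_UZ^h$, which are horizontal and given by (\ref{D-vh}): writing $W=t_1\sigma^+\times U^+-t_2\sigma^-\times U^-$ they combine into $\tfrac12\big(g(P_\varkappa R(W)Y,Z)+g(P_\varkappa Y,R(W)Z)\big)$, and using that the curvature endomorphism $R(W)$ is skew, that $P_\varkappa$ is symmetric, and that $g(\mathcal R(a),Z\wedge T)=g(R(a)Z,T)$, this equals the second term of (iii). The first term is the purely vertical derivative $U\big(g(K_{\sigma^+}K_{\sigma^-}Y,Z)\big)$; because $\sigma\mapsto K_\sigma$ is linear, differentiating along $U=(U^+,U^-)$ in the fibre gives $g\big((K_{U^+}K_{\sigma^-}+K_{\sigma^+}K_{U^-})Y,Z\big)$, and the mixed rule of (\ref{com-anticom}) lets me replace $K_{U^+}K_{\sigma^-}$ by $K_{\sigma^-}K_{U^+}$, producing the stated $g_p\big((K_{\sigma^-}K_{U^+}+K_{\sigma^+}K_{U^-})Y,Z\big)$.

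I expect (iii) to be the main obstacle. The delicate points are the correct evaluation of the vertical derivative of the $\varkappa$-dependent involution $P_\varkappa=K_{\sigma^+}K_{\sigma^-}$ and the reorganization of the curvature terms via the skew-symmetry of $R(W)$, the symmetry of $P_\varkappa$, and the definition of $\mathcal R$; throughout, the signs carried by $\varepsilon_\nu$, by $(-1)^{\nu+1}$, and by the $\Lambda^2_\pm$ case in (\ref{eq 2.5}) must be tracked consistently, and this sign bookkeeping is where an error is most likely to creep in.
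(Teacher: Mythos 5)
Your proposal is correct and follows essentially the same route as the paper's proof: both rest on Lemma~\ref{LC} (formulas (\ref{D-hh}), (\ref{D-vh})), Lemma~\ref{Lie-hor-ver}, Corollary~\ref{cor-DFhhv} together with (\ref{RJ,V}) and (\ref{eq 2.5}) for (ii), the bilinear fibre differentiation of $P_{\varkappa}=K_{\sigma^{+}}K_{\sigma^{-}}$ plus the commutation rules (\ref{com-anticom}) for (iii), and total geodesy of the fibres for (vi). The only cosmetic difference is that you dispatch the vanishing cases (i), (iv), (v) by horizontal/vertical type counts for $(D{\mathcal K}_{\nu})$ (using that ${\mathcal K}_{\nu}\widetilde a$ is again a field $\widetilde{a'}$ with $\nabla a'|_p=0$), where the paper instead runs short coordinate computations such as $X^h_{\varkappa}(G_{\bf t}({\mathcal K}_{\nu}\widetilde a,\widetilde b))=0$; the substance is identical.
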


\begin{proof}

Take an  oriented orthonormal basis $E_1,...,E_4$ of $T_pM$ such
that $\sigma^{\pm}=E_1\wedge E_2\pm E_3\wedge E_4$. Extend the basis
$E_1,...,E_4$ to an oriented orthormal frame in a neighbourhood of
the point $p$ such that $\nabla E_{\alpha}|_p=0$,
${\alpha}=1,...,4$. Using this frame, define sections $s_i^{\pm}$,
$i=1,2,3$, of $\Lambda^2_{\pm}TM$ by (\ref{s-basis}); clearly
$\nabla s^{\pm}_i|_p=0$. Also, extend $Y$ and $Z$ to vector fields
such that $\nabla Y|_p=\nabla Z|_p=0$. Then
$$
\begin{array}{c}
(D_{X^h_{\varkappa}}F_{\bf t,\nu})(Y^h,Z^h)=X^h_{\varkappa}(G_{\bf t}({\mathcal K}_{\nu}Y^h,Z^h)) \\[6pt]
-G_{\bf t}({\mathcal K}_{\nu}D_{X^h_{\varkappa}}Y^h,Z^h) -G_{\bf
t}(Y^h,{\mathcal K}_{\nu}D_{X^h_{\varkappa}}Z^h)
=X^h_{\varkappa}(G_{\bf t}({\mathcal K}_{\nu}Y^h,Z^h))
\end{array}
$$
since ${\mathcal K}_{\nu}D_{X^h_{\varkappa}}Y^h$ and ${\mathcal
K}_{\nu}D_{X^h_{\varkappa}}Z^h$ are vertical vectors by
(\ref{D-hh}). Setting $S=(s_1^{+},s_1^{-})$, we get a section of
${\mathscr P}$ with $S(p)=\varkappa$, $\nabla S|_p=0$. Hence
$$
\begin{array}{c}
X^h_{\varkappa}(G_{\bf t}({\mathcal K}_{\nu}Y^h,Z^h))=X_p(G_{\bf
t}({\mathcal K}_{\nu}Y^h,Z^h))\circ S)=X_p(g(K_{s^{+}_1}\circ
K_{s^{-}_1}Y,Z))\\[6pt]
=X_p(-\sum\limits_{k=1}^2g(E_k,Y)g(E_k,Z)+\sum\limits_{l=3}^4
g(E_l,Y)g(E_l,Z))=0
\end{array}
$$
since $\nabla E_{\alpha}|_p=\nabla Y|_p=\nabla Z|_p=0$. This proves
identity (i).

\smallskip

Extending the vector $U$ to a vertical vector field in a
neighbourhood of $\varkappa$, we see that
$$
(D_{X^h_{\varkappa}}F_{\bf t,\nu})(Y^h,U)=-G_{\bf
t}(D_{X^h}Y^h,{\mathcal K}_{\nu}U)_{\varkappa}-G_{\bf t}({\mathcal
K}_{\nu}Y^h,D_{U}X^h)_{\varkappa}
$$
since the vector ${\mathcal K}_{\nu}Y^h$ is horizontal, while U and
$[X^h,U]$ are vertical. Thus, the second formula of the lemma
follows from (\ref{D-hh}), (\ref{D-vh}), and (\ref{RJ,V}).

\smallskip

Formula (ii) follows from Corollary~\ref{cor-DFhhv} and
(\ref{RJ,V}).

\smallskip

Formula (iii) follows from (\ref{D-vh}) and the identity
$$
\begin{array}{c}
U(G_t({\mathcal K}_{\nu}Y^h,Z^h))=\sum\limits_{i,j=1}^3 U(y^{+}_i y^{-}_j (g(K_{s^{+}_i}K_{s^{-}_j}Y,Z)\circ\pi))\\[8pt]
=g_p((K_{U^{+}}K_{\sigma^{-}}+K_{\sigma^{+}}K_{U^{-}})Y,Z).
\end{array}
$$

\smallskip

To prove (iv), take sections $a=(a^{+},a^{-})$ and $b=(b^{+},b^{-})$
of $\Lambda^2_{+}TM\oplus\Lambda^2_{-}TM$ such that $a(p)=U$,
$b(p)=V$ and $\nabla a|_p=\nabla b|_p=0$. Let $\widetilde
a=(\widetilde a^{+},\widetilde a^{-})$ and $\widetilde b=(\widetilde
b^{+},\widetilde b^{-})$ be the vertical vector fields on ${\mathscr
P}$ defined by means of $a$ and $b$ via (\ref{tilde a}). Then
$\widetilde a(\varkappa)=U$, $\widetilde b(\varkappa)=V$, and
$[X^h,\widetilde a]_{\varkappa}=[X^h,\widetilde b]_{\varkappa}=0$ by
Lemma~\ref{Lie-hor-ver}. Hence $D_{X^h_{\varkappa}}\widetilde a$ and
$D_{X^h_{\varkappa}}\widetilde b$ are horizontal vectors by
(\ref{D-vh}). Thus,
$$
(D_{X^h_{\varkappa}}F_{\bf t})(U,V)=X^h_{\varkappa}(G_t({\mathcal
K}_{\nu}\widetilde a,\widetilde b)).
$$
We have $X^h_{\varkappa}(y^{\pm}_i)=0$, $i=1,2,3$, by (\ref{hl}).
Moreover,
$$
g(\widetilde a^{+},\widetilde
b^{+})=g(a^{+},b^{+})\circ\pi-\sum\limits_{i,j=1}^3 y^{+}_iy^{+}_j
(g(a^{+},s^{+}_i)\circ\pi)(g(b^{+},s^{+}_j)\circ\pi)
$$
Hence $X^h_{\varkappa}(g(\widetilde a^{+},\widetilde b^{+}))=0$.
Similarly $X^h_{\varkappa}(g(\widetilde a^{-},\widetilde b^{-}))=0$.
Therefore
$$
X^h_{\varkappa}(G_{\bf t}({\mathcal K}_{\nu}\widetilde a,\widetilde
b))=0.
$$
This proves (iv).

\smallskip

Next,
$$
(D_{U}F_{\bf t,\nu})(X^h,V)=U(G_{\bf t}({\mathcal
K}_{\nu}X^h,\widetilde b))-G_{\bf t}({\mathcal
K}_{\nu}D_{U}X^h,V)-G_{\bf t}({\mathcal K}_{\nu}X^h,D_{U}\widetilde
b)=0
$$
since ${\mathcal K}_{\nu}X^h$ and ${\mathcal K}_{\nu}D_{U}X^h$ are
horizontal vectors and $D_{U}\widetilde b $ is vertical. This is
identity (v).

\smallskip

Since $D=\nabla$ for vertical vector fields, identity (vi) is a
straightforward consequence from the definition of ${\mathcal
K}_{\nu}$ and the fact that $\nabla$ is a metric connection.

\end{proof}

Let $(N,h)$ be a Riemannian almost product manifold with almost
product structure $P$. Its Nijenhuis tensor is defined by
$$
{\mathcal N}_P(A,B)=[A,B]+[PA,PB]-P[PA,B]-P[A,PB]
$$
As usual, the structure $P$ is called integrable if the Nijenhuis
tensor vanishes. This condition is equivalent to the integrability
of both the vertical and horizontal distributions on the manifold
$N$. In this case $N$ is locally the product of two Riemannian
manifolds and $P$ is the trivial product structure determined by
these manifolds.

\smallskip

Denote by ${\mathcal N}_{\nu}$ the Nijenhuis tensor of the
endomorphism ${\mathcal K}_{\nu}$ of $T{\mathscr P}$. It can be
written in terms of the form $F_{\bf t,\nu}$ as
\begin{equation}\label{N-DF}
\begin{array}{c}
G_{\bf t}({\mathcal N}_{\nu}(A,B),C)=(D_{A}F_{\bf t,\nu})({\mathcal K}_{\nu}B,C) -(D_{B}F_{\bf t,\nu})({\mathcal K}_{\nu}A,C)\\[6pt]
+(D_{{\mathcal K}_{\nu}A}F_{\bf t,\nu})(B,C)-(D_{{\mathcal
K}_{\nu}B}F_{\bf t,\nu})(A,C).
\end{array}
\end{equation}

This identity, Corollary~\ref{cor-DFhhv}, and Lemma~\ref{D-F} imply:

\begin{cor}\label{N}
Let $\varkappa=(\sigma^{+},\sigma^{-})\in{\mathscr P}$, $X,Y\in
T_{\pi(\varkappa)}M$, $U,V\in{\mathcal V}_{\varkappa}$. Set
$P_{\varkappa}=K_{\sigma^{+}}\circ K_{\sigma^{-}}$. Then
$$
{\mathcal N}_{\nu}(X^h,Y^h)_{\varkappa}=R(X\wedge
Y+P_{\varkappa}X\wedge P_{\varkappa}Y)\varkappa-{\mathcal
K}_{\nu}(R(X\wedge P_{\varkappa}Y+ P_{\varkappa}X\wedge
Y)\varkappa);
$$
$$
{\mathcal
N}_{\nu}(X^h,U)_{\varkappa}=-((K_{\sigma^{+}}K_{U^{+}}+K_{\sigma^{-}}K_{U^{-}}+
\varepsilon_{\nu}K_{\sigma^{-}}K_{U^{+}}+\varepsilon_{\nu}(-1)^{\nu+1}K_{\sigma^{+}}K_{U^{-}})
X)^h_{\varkappa};
$$
$$
{\mathcal N}_{\nu}(U,V)=0.
$$
\end{cor}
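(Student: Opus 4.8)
The plan is to feed the three kinds of argument pairs into the identity~(\ref{N-DF}) and read off the result from Corollary~\ref{cor-DFhhv} and Lemma~\ref{D-F}. Two structural remarks organize the whole computation. First, since ${\mathcal K}_{\nu}$ is a $G_{\bf t}$-orthogonal involution it is $G_{\bf t}$-self-adjoint, so $F_{\bf t,\nu}$ is a \emph{symmetric} bilinear form and hence each $(D_{C}F_{\bf t,\nu})(A,B)$ is symmetric in $A,B$; this lets me apply the formulas of Lemma~\ref{D-F} in whichever slot is convenient. Second, ${\mathcal K}_{\nu}$ preserves the splitting $T{\mathscr P}={\mathcal H}\oplus{\mathcal V}$, sending $X^{h}$ to $(P_{\varkappa}X)^{h}$ and a vertical vector to a vertical vector. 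Consequently, to identify ${\mathcal N}_{\nu}(A,B)$ it suffices to pair it against a horizontal $Z^{h}$ and against a vertical $W$ separately.

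The vertical--vertical case is immediate: for $A=U$, $B=V$ every one of the four terms of~(\ref{N-DF}) has the shape $(D_{\mathrm{vert}}F_{\bf t,\nu})(\mathrm{vert},C)$ because ${\mathcal K}_{\nu}U,{\mathcal K}_{\nu}V$ are vertical; pairing against a vertical $C$ gives $0$ by Lemma~\ref{D-F}(vi) and against a horizontal $C$ gives $0$ by Lemma~\ref{D-F}(v) together with the symmetry of $F_{\bf t,\nu}$. Hence ${\mathcal N}_{\nu}(U,V)=0$. The same bookkeeping fixes the \emph{type} of the remaining two brackets: pairing ${\mathcal N}_{\nu}(X^{h},Y^{h})$ against $Z^{h}$ gives $0$ by Lemma~\ref{D-F}(i), so this bracket is vertical, while pairing ${\mathcal N}_{\nu}(X^{h},U)$ against $W$ gives $0$ by Lemma~\ref{D-F}(iv) and~(v), so that bracket is horizontal. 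This matches the shapes of the two asserted formulas and means only one pairing has to be computed in each case.

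For ${\mathcal N}_{\nu}(X^{h},Y^{h})$ I pair against a vertical $W$ and apply Corollary~\ref{cor-DFhhv} to all four terms, with the substitutions $Y\mapsto P_{\varkappa}Y$, $X\mapsto P_{\varkappa}X$, and so on, using $P_{\varkappa}^{2}=Id$. Sorting the outcome by whether a ${\mathcal K}_{\nu}$ is present, and invoking the antisymmetry of $R$ together with its linearity in the argument $X\wedge Y$, the four ${\mathcal R}$-terms without ${\mathcal K}_{\nu}$ collect into $G_{\bf t}(R(X\wedge Y+P_{\varkappa}X\wedge P_{\varkappa}Y)\varkappa,W)$ and the four with ${\mathcal K}_{\nu}$ into $-G_{\bf t}({\mathcal K}_{\nu}R(X\wedge P_{\varkappa}Y+P_{\varkappa}X\wedge Y)\varkappa,W)$. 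Since $W$ is an arbitrary vertical vector and the bracket is vertical, the first formula follows.

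The horizontal--vertical case is where the real work lies, and it is the main obstacle. Pairing ${\mathcal N}_{\nu}(X^{h},U)$ against $Z^{h}$, I evaluate the two terms whose derivative direction is horizontal (directions $X^{h}$ and $(P_{\varkappa}X)^{h}$) by Lemma~\ref{D-F}(ii), and the two whose direction is vertical (directions $U$ and ${\mathcal K}_{\nu}U$) by Lemma~\ref{D-F}(iii), recalling $({\mathcal K}_{\nu}U)^{+}=\varepsilon_{\nu}U^{+}$ and $({\mathcal K}_{\nu}U)^{-}=\varepsilon_{\nu}(-1)^{\nu+1}U^{-}$. Writing $\alpha=t_{1}\sigma^{+}\times U^{+}$ and $\beta=t_{2}\sigma^{-}\times U^{-}$ and sorting the resulting ${\mathcal R}$-terms by the four bilinear arguments $X\wedge Z$, $P_{\varkappa}X\wedge P_{\varkappa}Z$, $X\wedge P_{\varkappa}Z$ and $P_{\varkappa}X\wedge Z$, I expect them to cancel in pairs once one uses $\varepsilon_{\nu}^{2}=1$ and $(-1)^{\nu}=-(-1)^{\nu+1}$; this complete cancellation of the curvature contributions is the delicate step. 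What survives are the two algebraic terms coming from Lemma~\ref{D-F}(iii), and simplifying them is the second half of the task: in $(K_{\sigma^{-}}K_{U^{+}}+K_{\sigma^{+}}K_{U^{-}})P_{\varkappa}$ I push $P_{\varkappa}=K_{\sigma^{+}}K_{\sigma^{-}}$ through using the (anti)commutation rules~(\ref{com-anticom}), the relations $K_{\sigma^{\pm}}^{2}=-Id$ obtained from~(\ref{KK}), and the orthogonality $U^{\pm}\perp\sigma^{\pm}$ (valid because $U$ is vertical), which turns that operator into $K_{\sigma^{+}}K_{U^{+}}+K_{\sigma^{-}}K_{U^{-}}$. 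Adding the ${\mathcal K}_{\nu}U$-contribution and pairing against the arbitrary horizontal $Z^{h}$ yields the stated second formula.
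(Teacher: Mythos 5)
Your proposal is correct and follows essentially the same route as the paper, which derives the corollary precisely from identity (\ref{N-DF}) together with Corollary~\ref{cor-DFhhv} and Lemma~\ref{D-F}; your type analysis (horizontal--horizontal bracket is vertical, horizontal--vertical bracket is horizontal, vertical--vertical vanishes), the cancellation of all curvature terms in the mixed case via $\varepsilon_{\nu}^{2}=1$ and $(-1)^{\nu}(-1)^{\nu+1}=-1$, and the operator identity $(K_{\sigma^{-}}K_{U^{+}}+K_{\sigma^{+}}K_{U^{-}})P_{\varkappa}=K_{\sigma^{+}}K_{U^{+}}+K_{\sigma^{-}}K_{U^{-}}$ all check out. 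You have simply spelled out the computation the paper leaves implicit.
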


\begin{prop}\label{non int} The almost  product structures ${\mathcal K}_{\nu}$ are never integrable.
\end{prop}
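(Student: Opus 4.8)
The plan is to show that the Nijenhuis tensor $\mathcal{N}_\nu$ fails to vanish at every point $\varkappa=(\sigma^+,\sigma^-)\in\mathscr{P}$, using the explicit formulas of Corollary~\ref{N}. Since integrability of $\mathcal{K}_\nu$ is equivalent to the vanishing of $\mathcal{N}_\nu$, it suffices to exhibit, at an arbitrary $\varkappa$, a single pair of tangent vectors on which $\mathcal{N}_\nu$ is nonzero. The key observation is that the mixed component $\mathcal{N}_\nu(X^h,U)$ in Corollary~\ref{N} does not involve the curvature of $(M,g)$ at all; it is a purely algebraic expression in the operators $K_{\sigma^\pm}$ and $K_{U^\pm}$. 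Hence non-integrability will hold universally, independently of the geometry of the base.

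Concretely, I would fix $\varkappa$, choose the adapted basis $E_1,\dots,E_4$ of $T_pM$ with $\sigma^\pm=E_1\wedge E_2\pm E_3\wedge E_4$, and test the mixed term on vertical vectors of the special form $U=(U^+,0)$ with $0\neq U^+\perp\sigma^+$. For such $U$ the two terms containing $K_{U^-}$ in Corollary~\ref{N} drop out, and the formula collapses to
$$
\mathcal{N}_\nu(X^h,U)_\varkappa=-\left((K_{\sigma^+}+\varepsilon_\nu K_{\sigma^-})\,K_{U^+}X\right)^h_\varkappa .
$$
The task then reduces to showing that the endomorphism $(K_{\sigma^+}+\varepsilon_\nu K_{\sigma^-})K_{U^+}$ of $T_pM$ is not identically zero.

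For this I would argue as follows. Since $U^+$ is a nonzero element of $\Lambda^2_+T_pM$, the endomorphism $K_{U^+}$ is a nonzero multiple of a complex structure, hence invertible, so the image of the composition equals the image of $K_{\sigma^+}+\varepsilon_\nu K_{\sigma^-}$. The latter operator is nonzero: because $a\mapsto K_a$ is injective and $\sigma^+,\sigma^-$ lie in the complementary subbundles $\Lambda^2_+T_pM$, $\Lambda^2_-T_pM$, neither $\sigma^+=\sigma^-$ nor $\sigma^+=-\sigma^-$ can hold for unit vectors, so $K_{\sigma^+}\pm K_{\sigma^-}\neq 0$. One may also read this off the adapted basis directly: $K_{\sigma^+}+K_{\sigma^-}$ acts as a nonzero rotation in the $E_1E_2$-plane and annihilates $E_3,E_4$, whereas $K_{\sigma^+}-K_{\sigma^-}$ does the opposite. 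Choosing $X$ with $K_{U^+}X$ outside the kernel of $K_{\sigma^+}+\varepsilon_\nu K_{\sigma^-}$, and using the injectivity of the horizontal lift $X\mapsto X^h_\varkappa$, we obtain $\mathcal{N}_\nu(X^h,U)_\varkappa\neq 0$.

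The only point requiring care, and the closest thing to an obstacle, is the bookkeeping of the signs $\varepsilon_\nu$ and $(-1)^{\nu+1}$ so that the reduction to the displayed formula is correct for all four structures at once; the choice $U^-=0$ is precisely what makes the four cases uniform, since it removes the terms carrying the $(-1)^{\nu+1}$ factor. Once the formula is in this form, the conclusion is immediate for every $\nu$, and since $\varkappa$ was arbitrary, each $\mathcal{K}_\nu$ is nowhere integrable.
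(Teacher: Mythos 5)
Your proof is correct and takes essentially the same approach as the paper: both reduce to the mixed component $\mathcal{N}_\nu(X^h,U)$ of Corollary~\ref{N} evaluated on vertical vectors $U=(U^{+},0)$, where the formula collapses to $-\bigl((K_{\sigma^{+}}+\varepsilon_\nu K_{\sigma^{-}})K_{U^{+}}X\bigr)^h_{\varkappa}$. The paper simply finishes by plugging in the explicit choices $\varkappa=(s_1^{+},s_1^{-})$, $U=(s_2^{+},0)$, $X=E_3$ (resp.\ $E_1$) to get $2(E_2)^h_{\varkappa}$ (resp.\ $-2(E_4)^h_{\varkappa}$), while you reach the same nonvanishing conclusion by the equally valid observation that $K_{U^{+}}$ is invertible and $K_{\sigma^{+}}\pm K_{\sigma^{-}}\neq 0$.
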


\begin{proof}
Take an oriented orthonormal basis $E_1,...,E_4$ of a tangent space
$T_pM$ and define $s_i^{\pm}$, $i=1,2,3$, by (\ref{s-basis}). Set
$\varkappa=(s_1^{+},s_1^{-})$, $U=(s_2^{+},0)$. Then ${\mathcal
N}_1(E_3,U)={\mathcal N}_2(E_3,U)=2(E_2)^h_{\varkappa}$ and
${\mathcal N}_3(E_1,U)={\mathcal N}_4(E_1,U)=-2(E_4)^h_{\varkappa}$.
\end{proof}

\section{Gil-Medrano conditions on the manifold  ${\mathscr
P}$}\label{Gil-Medrano}

Let $(N,h)$ be a Riemannian almost product manifold with almost
product structure $P$ and Levi-Civita connection $\nabla$. Let
$\mathfrak{D}$ be one of its vertical or horizontal distribution.
Denote the dimension of $\mathfrak{D}$ by $m$. Define an $1$-form on
$N$ setting
\begin{equation}\label{alpha}
\alpha(X)=\sum\limits_{l=1}^m h((\nabla_{E_l}P)(E_l),X), \quad X\in
T_pN,
\end{equation}
where $\{E_l\}$ is an orthonormal basis of $\mathfrak{D}_p$.

Following \cite{Med}, we shall say that:

\smallskip

\noindent $(a)$ $\mathfrak{D}$ has the property $F$ if
$(\nabla_{A}P)(B)=(\nabla_{B}P)(A)$ for every $A,B\in \mathfrak{D}$;

\smallskip

\noindent $(b)$ $\mathfrak{D}$ has the property $D_1$  if
$(\nabla_{A}P)(B)=-(\nabla_{B}P)(A)$ for  $A,B\in \mathfrak{D}$
(equivalently, $(\nabla_{A}P)(A)=0$);

\smallskip

\noindent $(c)$  $\mathfrak{D}$ has the property $D_2$  if
$\alpha(X)=0$ for every $X\in \mathfrak{D}^{\perp}$;

\smallskip

\noindent $(d)$  $\mathfrak{D}$ has the property $D_3$  if
$$
h((\nabla_{A}P)(B),X)+h((\nabla_{B}P)(A),X)=\frac{2}{m}h(A,B)\alpha(X),\>
A,B\in \mathfrak{D},\> X\in\mathfrak{D}^{\perp};
$$

\smallskip

\noindent $(e)$  $\mathfrak{D}$ has the property $F_i$, $i=1,2,3$,
if it has the properties $F$ and $D_i$.

\smallskip

\noindent {\bf Remark 2.} Note that  $\mathfrak{D}$ has the property
$D_1$ if and only if it has the properties $D_2$ and $D_3$.

\smallskip

For the geometric interpretations of these conditions given in
\cite{Med}, see  Section~\ref{Geom int}.

\smallskip

Combining conditions $F$, $D_i$, $F_i$ for the vertical and the
horizontal distributions on $(N,h)$, and eliminating their duality,
we obtain the 36 Naveira classes.

\begin{lemma}\label{DAFB}
Let $\varkappa=(\sigma^{+},\sigma^{-})\in{\mathscr P}$, $X,Y,Z\in
T_{\pi(\varkappa)}M$, and $U,V,W\in{\mathcal V}_{\varkappa}$. Set
$A=(X^h_{\varkappa}+U)+{\mathcal K}_{\nu}(X^h_{\varkappa}+U)$,
$B=(Y^h_{\varkappa}+V)+{\mathcal K}_{\nu}(Y^h_{\varkappa}+V)$. Then
$$
\begin{array}{lr}
G_{\bf t}((D_{A}{\mathcal K}_{\nu})(B),Z^h_{\varkappa})\\[8pt]
= -\frac{1}{2}g({\mathcal
R}([\varepsilon_{\nu}+1]t_1\sigma^{+}\times
V^{+}+[\varepsilon_{\nu}(-1)^{\nu}-1]t_2\sigma^{-}\times V^{-}),X\wedge Z\\[8pt]
\hspace{5.5cm}-X\wedge P_{\varkappa}Z +P_{\varkappa}X\wedge Z - P_{\varkappa}X\wedge P_{\varkappa}Z)\\[8pt]
-\frac{1}{2}g({\mathcal R}([\varepsilon_{\nu}+1]t_1\sigma^{+}\times
U^{+}+[\varepsilon_{\nu}(-1)^{\nu}-1]t_2\sigma^{-}\times U^{-}),Y\wedge Z\\[8pt]
\hspace{5.5cm}-Y\wedge P_{\varkappa}Z+P_{\varkappa}Y\wedge Z - P_{\varkappa}Y\wedge P_{\varkappa}Z)\\[8pt]
+g(([\varepsilon_{\nu}+1]K_{\sigma^{-}}K_{U^{+}}-[\varepsilon_{\nu}(-1)^{\nu}-1]K_{\sigma^{+}}K_{U^{-}})(Y+P_{\varkappa}Y),Z);
\end{array}
$$

$$
\begin{array}{lr}
G_{\bf t}((D_{A}{\mathcal K}_{\nu})(B),W)\\[8pt]
=-\frac{1}{2}g({\mathcal R}([\varepsilon_{\nu}-1]t_1\sigma^{+}\times
W^{+} + [\varepsilon_{\nu}(-1)^{\nu}+1]t_2\sigma^{-}\times
W^{-}),X\wedge Y\\[8pt]
\hspace{5.4cm}+X\wedge P_{\varkappa}Y +P_{\varkappa}X\wedge
Y+P_{\varkappa}X\wedge P_{\varkappa}Y),
\end{array}
$$
where $P_{\varkappa}=K_{\sigma^{+}}\circ K_{\sigma^{-}}$.
\end{lemma}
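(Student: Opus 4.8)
The plan is to expand the covariant derivative $(D_A\mathcal{K}_\nu)(B)$ directly using the defining identity $G_{\bf t}((D_A\mathcal{K}_\nu)(B),C) = (D_A F_{\bf t,\nu})(B,C)$, which holds since $\mathcal{K}_\nu$ is $G_{\bf t}$-self-adjoint. This reduces the entire computation to evaluating $D F_{\bf t,\nu}$ on the various type combinations (horizontal-horizontal, horizontal-vertical, vertical-vertical), all of which are already catalogued in Lemma~\ref{D-F} and Corollary~\ref{cor-DFhhv}. So the strategy is purely to substitute and collect, with no new geometric input required.

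First I would set $A = (X^h_\varkappa + U) + \mathcal{K}_\nu(X^h_\varkappa + U)$ and write $\mathcal{K}_\nu X^h_\varkappa = (P_\varkappa X)^h_\varkappa$ and $\mathcal{K}_\nu U = \varepsilon_\nu(U^+, (-1)^{\nu+1}U^-)$ using the definition of $\mathcal{K}_\nu$ and the fact that $P_\varkappa = K_{\sigma^+}\circ K_{\sigma^-}$. Thus the horizontal part of $A$ is $(X + P_\varkappa X)^h_\varkappa$ and its vertical part is $U + \mathcal{K}_\nu U$, and similarly for $B$. Then $G_{\bf t}((D_A\mathcal{K}_\nu)(B), Z^h_\varkappa)$ decomposes by bilinearity into a sum over whether the differentiating slot $A$ contributes horizontally or vertically and whether the $B$-slot does likewise. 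Each resulting term is one of the expressions evaluated in Lemma~\ref{D-F}: items (i)--(v) tell me that the purely horizontal derivative of $F$ on horizontal arguments vanishes, that $(D_{X^h}F_{\bf t,\nu})(U,V)=0$, and that $(D_U F_{\bf t,\nu})(X^h,V)=0$, so only the genuinely mixed terms survive. For the first asserted formula (target $Z^h_\varkappa$ horizontal), the surviving contributions are $(D_{(\cdot)^h}F_{\bf t,\nu})(V,Z^h)$ and $(D_U F_{\bf t,\nu})(Y^h,Z^h)$ type terms, supplied by item (ii) (after using the antisymmetry of $F$) and item (iii) respectively; for the second formula (target $W$ vertical) only $(D_{X^h}F_{\bf t,\nu})(Y^h,W)$-type and $(D_U F_{\bf t,\nu})(V,W)$-type terms can appear, and item (vi) kills the latter, leaving only item (ii).

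The bookkeeping of signs is where the combinatorial factors $[\varepsilon_\nu+1]$, $[\varepsilon_\nu(-1)^\nu-1]$, $[\varepsilon_\nu-1]$, and $[\varepsilon_\nu(-1)^\nu+1]$ arise. Concretely, when $A$ presents its vertical part $U+\mathcal{K}_\nu U = (U^+ + \varepsilon_\nu U^+,\, U^- + \varepsilon_\nu(-1)^{\nu+1}U^-)$, the $\sigma^+\times U^+$ coefficient collects a factor $(1+\varepsilon_\nu)$ while the $\sigma^-\times U^-$ coefficient collects $(1 + \varepsilon_\nu(-1)^{\nu+1}) = -(\varepsilon_\nu(-1)^\nu - 1)$; the analogous computation on the $B$-slot produces the $[\varepsilon_\nu-1]$ and $[\varepsilon_\nu(-1)^\nu+1]$ coefficients. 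The polarization structure $X\wedge Z - X\wedge P_\varkappa Z + P_\varkappa X\wedge Z - P_\varkappa X\wedge P_\varkappa Z$ (and its $+$-sign analogue in the second formula) comes from pairing the two-piece horizontal part $(X+P_\varkappa X)^h$ against either $Z^h$ or the $P_\varkappa$-twisted second argument built into item (ii); I would track the $\pm$ inside item (ii)'s second line, where $K_{\sigma^+}K_{\sigma^-}Y = P_\varkappa Y$ appears, to get the exact sign pattern.

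The main obstacle is the sign and parity bookkeeping, not any conceptual difficulty: I must keep the $\varepsilon_\nu$, the $(-1)^\nu$ from the vertical action of $\mathcal{K}_\nu$, and the $\pm$ from equation~\eqref{eq 2.5}/\eqref{RJ,V} all consistent, and I must correctly transfer $\mathcal{K}_\nu$ off of one slot and onto the metric argument when using self-adjointness so that the right $P_\varkappa$-twists land on $Z$ versus on $Y$. I would guard against errors by checking the formula on a single value of $\nu$ (say $\nu=1$, where $\varepsilon_\nu=1$ and $(-1)^\nu=-1$, so the $\sigma^-$ terms in the first formula drop out and the $\sigma^+$ terms in the second drop out) and confirming consistency with the Nijenhuis expressions in Corollary~\ref{N}, which are built from the same $DF$ data.
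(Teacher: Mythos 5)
Your proposal takes essentially the same route as the paper: the paper's proof consists precisely of the identity $G_{\bf t}((D_{A}{\mathcal K}_{\nu})(B),C)=(D_{A}F_{\bf t,\nu})(B,C)$ combined with Lemma~\ref{D-F}, followed by the bilinear expansion and sign bookkeeping you describe, and your coefficient computations reproduce the stated formulas correctly. One small correction: swapping arguments to bring $(D_{(\cdot)^h}F_{\bf t,\nu})(V,Z^h)$ into the form of Lemma~\ref{D-F}(ii) uses the \emph{symmetry} of $F_{\bf t,\nu}$ (which holds because ${\mathcal K}_{\nu}$ is $G_{\bf t}$-self-adjoint), not antisymmetry; taking antisymmetry literally would introduce a spurious sign in the $V$-terms of the first formula.
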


\begin{proof}
These formulas follow from Lemma~\ref{D-F} and the identity
$$
G_{\bf t}((D_{A}{\mathcal K}_{\nu})(B),C)=(D_{A}F_{\bf t,\nu})(B,C)
$$
by a simple computation.
\end{proof}

\bigskip

Let $\mathcal{D}_{\nu}$ be the distribution on the manifold
${\mathscr P}$ for which  ${\mathcal K}_{\nu}|\mathcal{D}_{\nu}
=Id$, $\nu=1,2$.

\begin{prop}\label{prop-D-F}
$(i)$ ~ The distribution ${\mathcal D}_{\nu}$ of the almost product
structure ${\mathcal K}_{\nu}$ does not have the property {\rm{F}}
for $\nu=1,2,4$.

 $(ii)$ ~ The distribution ${\mathcal D}_{3}$ has the property {\rm{F}} if and only if $(M,g)$ is of constant curvature.
\end{prop}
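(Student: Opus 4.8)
The plan is to read property F for ${\mathcal D}_{\nu}$ off the two formulas of Lemma~\ref{DAFB}. Every element of ${\mathcal D}_{\nu}$ is of the form $A=(X^h+U)+{\mathcal K}_{\nu}(X^h+U)$, so with $B=(Y^h+V)+{\mathcal K}_{\nu}(Y^h+V)$ property F is equivalent to the symmetry of $G_{\bf t}((D_A{\mathcal K}_{\nu})(B),C)$ under the interchange $(X,U)\leftrightarrow(Y,V)$, for $C$ ranging over all horizontal vectors $Z^h_{\varkappa}$ and all vertical vectors $W$ (which together determine the tangent vector $(D_A{\mathcal K}_{\nu})(B)$). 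The structural point is that in the first formula of Lemma~\ref{DAFB} the two ${\mathcal R}$-terms are exchanged by $(X,U)\leftrightarrow(Y,V)$ and cancel in the antisymmetrization, leaving
$$
G_{\bf t}\big((D_A{\mathcal K}_{\nu})(B)-(D_B{\mathcal K}_{\nu})(A),Z^h_{\varkappa}\big)=g\big(L_{\nu}(U)(Y+P_{\varkappa}Y),Z\big)-g\big(L_{\nu}(V)(X+P_{\varkappa}X),Z\big),
$$
where $L_{\nu}(U)=(\varepsilon_{\nu}+1)K_{\sigma^{-}}K_{U^{+}}-(\varepsilon_{\nu}(-1)^{\nu}-1)K_{\sigma^{+}}K_{U^{-}}$; in the second formula the factor $(Id+P_{\varkappa})X\wedge(Id+P_{\varkappa})Y$ is antisymmetric in $X,Y$, so the antisymmetrization merely doubles it.

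For part (i) I note that the coefficients $\varepsilon_{\nu}+1$ and $1-\varepsilon_{\nu}(-1)^{\nu}$ of $L_{\nu}$ equal $(2,2)$, $(2,0)$, $(0,2)$ for $\nu=1,2,4$, so $L_{\nu}\neq0$. To produce an explicit violation, fix $p$ and a basis with $\sigma^{\pm}=E_1\wedge E_2\pm E_3\wedge E_4$ and take the parametrization data $X=V=0$, $Y=E_3$, together with $U=(s_2^{+},0)$ for $\nu=1,2$ resp. $U=(0,s_2^{-})$ for $\nu=4$. Since $P_{\varkappa}E_3=E_3$, the corresponding $A,B$ lie in ${\mathcal D}_{\nu}$, and using $K_{s_2^{+}}E_3=-E_1=K_{s_2^{-}}E_3$ together with $K_{\sigma^{+}}E_1=E_2=K_{\sigma^{-}}E_1$ the right-hand side above evaluated at $Z=E_2$ equals $-4\neq0$. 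Hence ${\mathcal D}_{\nu}$ does not have property F for $\nu=1,2,4$.

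For part (ii), when $\nu=3$ both coefficients of $L_{3}$ vanish, so the horizontal defect is identically zero and property F for ${\mathcal D}_3$ reduces to the vanishing of the vertical defect for all $X,Y\in T_pM$, all $W\in{\mathcal V}_{\varkappa}$, and all $\varkappa=(\sigma^{+},\sigma^{-})\in{\mathscr P}$. Here $(Id+P_{\varkappa})X\wedge(Id+P_{\varkappa})Y$ is always a real multiple of $E_3\wedge E_4=\tfrac12(\sigma^{+}-\sigma^{-})$, and as $W^{\pm}$ runs over ${\mathcal V}_{\sigma^{\pm}}$ so does $\xi^{\pm}:=\sigma^{\pm}\times W^{\pm}$; since $t_1,t_2>0$ and $\xi^{+},\xi^{-}$ vary independently, the condition splits into $g({\mathcal R}\xi^{+},\sigma^{+}-\sigma^{-})=0$ and $g({\mathcal R}\xi^{-},\sigma^{+}-\sigma^{-})=0$ for all $\xi^{\pm}\perp\sigma^{\pm}$. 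Substituting the decomposition (\ref{dec}) and using that ${\mathcal B}$ interchanges $\Lambda^2_{\pm}TM$ while ${\mathcal W}_{\pm}$ preserves $\Lambda^2_{\pm}TM$ and annihilates $\Lambda^2_{\mp}TM$, these become
$$
g({\mathcal W}_{+}\xi^{+},\sigma^{+})=g({\mathcal B}\xi^{+},\sigma^{-}),\qquad g({\mathcal B}\xi^{-},\sigma^{+})=g({\mathcal W}_{-}\xi^{-},\sigma^{-})
$$
for all unit $\sigma^{\pm}$ and all $\xi^{\pm}\perp\sigma^{\pm}$.

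The final step, which I expect to be the main obstacle, is to deduce constant curvature from these identities. In the first one the left-hand side is independent of $\sigma^{-}$, while $\sigma^{-}\mapsto g({\mathcal B}\xi^{+},\sigma^{-})$ is a linear, hence odd, functional on $\Lambda^2_{-}T_pM$; being constant on the unit sphere it must vanish, whence ${\mathcal B}\xi^{+}=0$ and $g({\mathcal W}_{+}\xi^{+},\sigma^{+})=0$ for all $\xi^{+}\perp\sigma^{+}$. Varying $\sigma^{+}$ gives ${\mathcal B}|\Lambda^2_{+}TM=0$ and that every unit vector of $\Lambda^2_{+}TM$ is an eigenvector of the self-adjoint traceless operator ${\mathcal W}_{+}$, so ${\mathcal W}_{+}=0$; the second identity yields symmetrically ${\mathcal B}|\Lambda^2_{-}TM=0$ and ${\mathcal W}_{-}=0$. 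Then (\ref{dec}) gives ${\mathcal R}=\tfrac{s}{6}Id$, i.e.\ $(M,g)$ has constant curvature. The converse is immediate, since ${\mathcal R}=\tfrac{s}{6}Id$ forces ${\mathcal B}=0$ and ${\mathcal W}_{\pm}=0$, making both identities hold trivially, so ${\mathcal D}_3$ then has property F.
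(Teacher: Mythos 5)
Your proof is correct, and while part (i) is essentially the paper's argument, your treatment of part (ii) is genuinely different in its key step. For (i) both you and the paper read the failure of property F off the first formula of Lemma~\ref{DAFB}: the paper takes the single test vector $U=(s_2^{+},s_2^{-})$ and reduces the symmetry to the numerical identity $[\varepsilon_{\nu}+1]-[\varepsilon_{\nu}(-1)^{\nu}-1]=0$, while you take one-sided $U$'s adapted to each $\nu$ and compute the defect $-4$; this is a cosmetic difference. For (ii) both proofs reduce, via Lemma~\ref{DAFB}, to the same curvature identity (the paper's (\ref{eq D3-1})), but from there the paper proceeds by evaluating on explicit frames (its cases (a), (b), (c)) together with basis rotations, whereas you observe that the relevant $2$-form is $(Id+P_{\varkappa})X\wedge(Id+P_{\varkappa})Y$, hence a multiple of $\sigma^{+}-\sigma^{-}$, split the condition by the independence of $W^{+}$ and $W^{-}$, substitute the decomposition (\ref{dec}), and finish with two invariant linear-algebra facts (a linear functional constant on the unit sphere vanishes; a self-adjoint traceless operator for which every unit vector is an eigenvector is zero). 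Your route is coordinate-free, shorter, and less exposed to bookkeeping errors: in fact the paper's frame computation contains a slip, since its case (b) actually yields $g({\mathcal R}(s_2^{+}),s_2^{-})=0$ rather than the stated $g({\mathcal R}(s_2^{+}),s_2^{+})=0$ (and correspondingly the displayed conclusion $g({\mathcal R}(s_i^{+}),s_j^{+})=0$ should hold only for $i\neq j$); the diagonal vanishing as literally stated would force $s=0$, contradicting the converse direction of the proposition, which your argument, like the paper's intended one, correctly leaves open. What the paper's approach buys in exchange is that it requires no structural observation about the $2$-form at all, only patience with explicit bases.
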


\begin{proof}
$(i)$ Let $E_1,...,E_4$ be an oriented orthonormal basis of a
tangent space $T_pM$. Define $s_i^{\pm}$, $i=1,2,3$, by
(\ref{s-basis}), and set $\varkappa=(s_1^{+},s_1^{-})$, $X=0$,
$U=(s_2^{+},s_2^{-})$, $Y=E_3$, $V=0$, $Z=E_2$. Then the identity
$G_{\bf t}((D_{A}{\mathcal K}_{\nu})(B),Z^h_{\varkappa})=G_{\bf
t}((D_{B}{\mathcal K}_{\nu})(A),Z^h_{\varkappa})$ becomes
$[\varepsilon_{\nu}+1]-[\varepsilon_{\nu}(-1)^{\nu}-1]=0$, an
identity, which does not hold for $\nu=1,2,4$.

$(ii)$ By Lemma~\ref{DAFB}, the distribution ${\mathcal D}_{3}$ has
the property \rm{F} if and only if
\begin{equation}\label{eq D3-1}
g({\mathcal R}(t_1\sigma^{+}\times W^{+} - t_2\sigma^{-}\times
W^{-}),X\wedge Y+X\wedge P_{\varkappa}Y +P_{\varkappa}X\wedge
Y+P_{\varkappa}X\wedge P_{\varkappa}Y)=0
\end{equation}
for every $\varkappa=(\sigma^{+},\sigma^{-})\in{\mathscr P}$,
$W^{\pm}\in\Lambda^{2}_{\pm}T_{\pi(\varkappa)}M$ with
$W^{\pm}\perp\sigma^{\pm}$ and  $X,Y\in T_{\pi(\varkappa)}M$.
Applying this identity  for $(W^{+},-W^{-})$, we see that condition
(\ref{eq D3-1})  is equivalent to
$$
g({\mathcal R}(\sigma^{\pm}\times W^{\pm}),X\wedge Y+X\wedge
P_{\varkappa}Y +P_{\varkappa}X\wedge Y+P_{\varkappa}X\wedge
P_{\varkappa}Y)=0.
$$
Replacing $\sigma^{-}$ and $W^{-}$ by $-\sigma^{-}$ and $-W^{-}$, we
observe that the latter equations are equivalent to
\begin{equation}\label{eq D3-2}
g({\mathcal R}(\sigma^{\pm}\times W^{\pm}),X\wedge
Y+P_{\varkappa}X\wedge P_{\varkappa}Y)=0.
\end{equation}
Let $E_1,...,E_4$ be an oriented orthonormal basis of a tangent
space $T_pM$ and define $s_i^{\pm}$, $i=1,2,3$, by (\ref{s-basis}).
We apply(\ref{eq D3-2}) with (a) $\varkappa=(s_1^{+},s_1^{-})$,
$W^{+}=s_2^{+}, s_3^{+}$, $(X,Y)=(E_1,E_2), (E_3,E_4)$, (b)
$\varkappa=(s_1^{+},s_2^{-})$, $W^{+}=s_3^{+}$, $(X,Y)=(E_1,E_3)$,
(c) $\varkappa=(s_3^{+},s_1^{-})$, $W^{+}=s_2^{+}$,
$(X,Y)=(E_1,E_2)$. This gives
$$
\begin{array}{c}
g({\mathcal R}(s_3^{+}),s_1^{+})=g({\mathcal
R}(s_3^{+}),s_1^{-})=g({\mathcal R}(s_2^{+}),s_1^{+})
=g({\mathcal R}(s_2^{+}),s_1^{-})=0,\\[8pt]
g({\mathcal R}(s_2^{+}),s_2^{+})=0,\quad g({\mathcal
R}(s_1^{+}),s_1^{-})=0.
\end{array}
$$
Replacing the basis $E_1,E_2,E_3,E_4$ by $E_1,E_3,E_4,E_2$ and
$E_1,E_4,E_2,E_3$, we see that
$$
g({\mathcal R}(s_i^{+}),s_j^{+})=g({\mathcal
R}(s_i^{+}),s_j^{-})=0,\quad i,j=1,2,3.
$$
Therefore ${\mathcal W}_{+}={\mathcal B}=0$. In the same way, we get
${\mathcal W}_{-}=0$ from (\ref{eq D3-2}). This shows that $(M,g)$
is of constant curvature.

Conversely, if  $(M,g)$ is of constant curvature,  the identity
(\ref{eq D3-2}) is satisfied by (\ref{com-anticom}).

\end{proof}

\smallskip




\begin{prop}\label{prop-D-1}
{\rm{(I)}}~The distribution ${\mathcal D}_{\nu}$, $\nu=1,2,4$, of
the almost product structure ${\mathcal K}_{\nu}$ has the property
{\rm{$D_1$}} if and only if:

$(i)$~ $(M,g)$ is of  positive constant sectional curvature $\chi$
and $t_1=t_2=\displaystyle{\frac{3}{8\chi}}$, in the case  $\nu=1$;

\smallskip

$(ii)$~ $(M,g)$ is anti-self-dual and Einstein with positive  scalar
curvature $s$, and $t_1=\displaystyle{\frac{6}{s}}$, in the case
$\nu=2$  (no condition on $t_2>0$).

\smallskip

$(iii)$ ~ $(M,g)$ is self-dual and Einstein with positive  scalar
curvature $s$, and $t_2=\displaystyle{\frac{6}{s}}$, in the case
$\nu=4$  (no condition on $t_1>0$).

\smallskip

{\rm{(II)}} ~ The distribution ${\mathcal D}_{3}$ has the property
{\rm{$D_1$}}.

\end{prop}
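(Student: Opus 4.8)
The plan is to test the Gil-Medrano condition $D_1$ in its equivalent form $(\nabla_A{\mathcal K}_\nu)(A)=0$ for every $A\in{\mathcal D}_\nu$ (recall $\nabla=D$ and $P={\mathcal K}_\nu$ here), and to read $(D_A{\mathcal K}_\nu)(A)$ off Lemma~\ref{DAFB}. Writing $A=(X^h_\varkappa+U)+{\mathcal K}_\nu(X^h_\varkappa+U)$, the choice $B=A$ amounts to setting $Y=X$ and $V=U$ in that lemma. In the second formula the two-vector $X\wedge Y+X\wedge P_\varkappa Y+P_\varkappa X\wedge Y+P_\varkappa X\wedge P_\varkappa Y$ collapses to $X\wedge P_\varkappa X+P_\varkappa X\wedge X=0$, so the vertical component of $(D_A{\mathcal K}_\nu)(A)$ vanishes automatically and only the horizontal component survives. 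Using the factorization $X\wedge Z-X\wedge P_\varkappa Z+P_\varkappa X\wedge Z-P_\varkappa X\wedge P_\varkappa Z=(X+P_\varkappa X)\wedge(Z-P_\varkappa Z)$, property $D_1$ for ${\mathcal D}_\nu$ becomes the single identity
$$
g({\mathcal R}(c_1 t_1\sigma^{+}\times U^{+}+c_2 t_2\sigma^{-}\times U^{-}),(X+P_\varkappa X)\wedge(Z-P_\varkappa Z))=g((c_1 K_{\sigma^{-}}K_{U^{+}}-c_2 K_{\sigma^{+}}K_{U^{-}})(X+P_\varkappa X),Z),
$$
required for all $\varkappa=(\sigma^{+},\sigma^{-})$, all $X,Z\in T_pM$ and all $U=(U^{+},U^{-})$ with $U^{\pm}\perp\sigma^{\pm}$, where $c_1=\varepsilon_\nu+1$ and $c_2=\varepsilon_\nu(-1)^\nu-1$.

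For $\nu=3$ one has $\varepsilon_3=-1$, hence $c_1=0$ and $c_2=0$; both sides vanish identically and ${\mathcal D}_3$ has property $D_1$. This settles part (II).

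For the remaining cases I would substitute $(c_1,c_2)=(2,-2),(2,0),(0,-2)$ for $\nu=1,2,4$ and decompose ${\mathcal R}$ via (\ref{dec}). Since $\sigma^{+}\times U^{+}\in\Lambda^2_{+}TM$ and $\sigma^{-}\times U^{-}\in\Lambda^2_{-}TM$, the left-hand side splits into a part governed by $\frac{s}{6}Id+{\mathcal W}_{+}$, a part governed by ${\mathcal W}_{-}$, and a part governed by ${\mathcal B}$ that couples $\Lambda^2_{+}TM$ to $\Lambda^2_{-}TM$, while the right-hand side is curvature-free. For $\nu=2$ only $U^{+}$ enters, so running the identity over an adapted frame $\sigma^{\pm}=E_1\wedge E_2\pm E_3\wedge E_4$ with choices of $\varkappa$, $X$, $Z$, $U^{+}$ of the kind used in the proof of Proposition~\ref{prop-D-F}(ii) forces ${\mathcal W}_{+}=0$ and ${\mathcal B}=0$, i.e. $(M,g)$ is anti-self-dual and Einstein; the case $\nu=4$ is the mirror image and yields self-dual Einstein; for $\nu=1$ the $U^{+}$- and $U^{-}$-terms are independent, so ${\mathcal W}_{+}={\mathcal W}_{-}=0$ and ${\mathcal B}=0$, i.e. constant curvature. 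Evaluating (\ref{eq 2.5}), (\ref{cs}) and (\ref{KK}) on these frames is what turns the abstract pairing into the stated vanishing, and this bookkeeping is the main labor.

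Once the curvature has been reduced to ${\mathcal R}=\frac{s}{6}Id$ on the relevant subbundle, what remains of the identity is a scalar equation $\frac{s}{6}t_i\cdot(\text{const})=(\text{const})$, whose right-hand constant comes from the intrinsic products $K_{\sigma^{\mp}}K_{U^{\pm}}$ through (\ref{KK}); solving it pins down $t_1$ (for $\nu=1,2$) or $t_2$ (for $\nu=1,4$) in terms of $s$, the condition $t_i>0$ forces positivity of $s$ (equivalently of $\chi$), and the parameter that drops out is left unconstrained. The converse is immediate: when the curvature has the stated special form, (\ref{com-anticom}) and (\ref{KK}) make both sides of the displayed identity coincide for the prescribed value of $t_i$. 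I expect the extraction of ${\mathcal W}_{\pm}={\mathcal B}=0$ above, together with the precise determination of the numerical constant relating $t_i$ and $s$, to be the only delicate points.
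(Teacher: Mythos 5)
Your proposal is correct and takes essentially the same route as the paper's own proof: both specialize Lemma~\ref{DAFB} to $B=A$ (the $(\nabla_{A}P)(A)=0$ form of $D_1$), note that the vertical component then vanishes, split the resulting horizontal identity according to the coefficients $\varepsilon_{\nu}+1$ and $\varepsilon_{\nu}(-1)^{\nu}-1$ to settle $\nu=3$ and to isolate which of $U^{+}$, $U^{-}$ enters for $\nu=1,2,4$, and then use adapted frames with the decomposition (\ref{dec}) and $Trace\,{\mathcal W}_{\pm}=0$ to extract ${\mathcal W}_{\pm}={\mathcal B}=0$ and $t_i=6/s$. The frame bookkeeping you defer is exactly what the paper carries out explicitly in (\ref{D-1-1})--(\ref{D-1-3}).
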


\begin{proof}

By Lemma~\ref{DAFB},  ${\mathcal D}_{\nu}$ has the property  \rm
{$D_1$} if and only if
\begin{equation}\label{D-1-eq1}
\begin{array}{l}
g({\mathcal R}([\varepsilon_{\nu}+1]t_1\sigma^{+}\times
U^{+}+[\varepsilon_{\nu}(-1)^{\nu}-1]t_2\sigma^{-}\times
U^{-}),X\wedge Z-X\wedge
P_{\varkappa}Z\\[8pt]
\hspace{7.8cm}+P_{\varkappa}X\wedge Z -
P_{\varkappa}X\wedge P_{\varkappa}Z)\\[8pt]
-g(([\varepsilon_{\nu}+1]K_{\sigma^{-}}K_{U^{+}}-[\varepsilon_{\nu}(-1)^{\nu}-1]K_{\sigma^{+}}K_{U^{-}})(X+P_{\varkappa}X),Z)\\[6pt]
=0
\end{array}
\end{equation}
for every $\varkappa=(\sigma^{+},\sigma^{-})\in{\mathscr P}$,
$U^{\pm}\in \Lambda^2_{\pm}T_{\pi(\varkappa)}M$ with
$U^{\pm}\perp\sigma^{\pm}$ and  $X,Z\in T_{\pi(\varkappa)}M$.

As in the proof of the preceding proposition, it is easy to see that
this condition is equivalent to the identities
\begin{equation}\label{D-1}
\begin{array}{l}
[\varepsilon_{\nu}+1]\big\{t_1g({\mathcal R}(\sigma^{+}\times U^{+}),X\wedge Z-P_{\varkappa}X\wedge P_{\varkappa}Z)\\[8pt]
\hspace{8.1cm}-g(K_{\sigma^{+}}K_{U^{+}}X,Z)\big\}=0,\\[8pt]
[\varepsilon_{\nu}(-1)^{\nu}-1]\big\{t_2g({\mathcal
R}(\sigma^{-}\times U^{-}),X\wedge Z-P_{\varkappa}X\wedge
P_{\varkappa}Z)\\[8pt]
\hspace{8.1cm} +g(K_{\sigma^{-}}K_{U^{-}}X,Z)\big\}=0.
\end{array}
\end{equation}

Clearly both identities are satisfied if $\nu=3$. Note also that, by
(\ref{KK}), $K_{\sigma^{+}}K_{U^{+}}=K_{\sigma^{+}\times U^{+}}$ and
$K_{\sigma^{-}}K_{U^{-}}=-K_{\sigma^{-}\times U^{-}}$. Thus, if
$\nu=1$, changing the orientation of $M$ interchanges the identities
in (\ref{D-1}). If $\nu=2$, the second identity in (\ref{D-1}) is
trivially satisfied and if $\nu=4$, so does the second one.

Now, suppose that $\varepsilon_{\nu}+1\neq 0$ and the first identity
in (\ref{D-1}) holds. Let $E_1,...,E_4$ be an oriented orthonormal
basis of a tangent space $T_pM$ of $M$ and define $s^{\pm}_i$,
$i=1,2,3$, by (\ref{s-basis}). Taking $\varkappa=(s^{+}_1,s^{-}_1)$,
$U^{+}=s^{+}_2$, we get from the first identity of (\ref{D-1})
$$
\begin{array}{c}
g({\mathcal R}(s^{+}_3),E_1\wedge E_3)=g({\mathcal R}(s^{+}_3),E_2\wedge E_4)=0,\\[6pt]
2t_1g({\mathcal R}(s^{+}_3),E_1\wedge E_4)-1=0,\quad 2t_1g({\mathcal
R}(s^{+}_3),E_2\wedge E_3)-1=0,
\end{array}
$$
Therefore
\begin{equation}\label{D-1-1}
\begin{array}{c}
g({\mathcal R}(s^{+}_3),s^{+}_2)=g({\mathcal R}(s^{+}_3),s^{-}_2)=g({\mathcal R}(s^{+}_3),s^{-}_3)=0,\\[6pt]
t_1g({\mathcal R}(s^{+}_3),s^{+}_3)-1=0.
\end{array}
\end{equation}
Similarly, taking $U^{+}=s^{+}_3$, we obtain
\begin{equation}\label{D-1-2}
\begin{array}{c}
g({\mathcal R}(s^{+}_2),s^{+}_3)=g({\mathcal R}(s^{+}_2),s^{-}_2)=g({\mathcal R}(s^{+}_2),s^{-}_3)=0,\\[6pt]
t_1g({\mathcal R}(s^{+}_2),s^{+}_2)-1=0.
\end{array}
\end{equation}
Replacing the basis $E_1,E_2,E_3,E_4$ by $E_1,E_3,E_4,E_2$ and
$E_1,E_4,E_2,E_3$, we see from (\ref{D-1-1}) and (\ref{D-1-2}) that
\begin{equation}\label{D-1-3}
t_1g({\mathcal R}(s^{+}_i),s^{+}_j)-\delta_{ij}=0,\quad g({\mathcal
R}(s^{+}_i),s^{-}_j)=0,\quad i,j=1,2,3.
\end{equation}
Now, the curvature decomposition (\ref{dec}) and the fact that
$Trace\,{\mathcal W}_{+}=0$ imply $\displaystyle{t_1=\frac{6}{s}}$.
Then the first identity of (\ref{D-1-3}) gives $g({\mathcal
W}_{+}(s_i^{+}),s_i^{+})=0$, $i=1,2,3$. Hence ${\mathcal W}_{+}=0$.
The second identity of (\ref{D-1-3}) means that ${\mathcal B}=0$.

Conversely, if  $t_1=\displaystyle{\frac{6}{s}}$ and ${\mathcal
B}={\mathcal W}_{+}=0$,  it is easy to check, using (\ref{cs}),
(\ref{com-anticom})  and (\ref{KK}), that the first identity of
(\ref{D-1}) is  fulfilled. This proves the result for $\nu=2$.

If $\nu=1$ or $\nu=4$, the second identity of (\ref{D-1}) holds if
and only if $t_2=\displaystyle{\frac{6}{s}}$ and ${\mathcal
B}={\mathcal W}_{-}=0$.

\end{proof}



\begin{prop}\label{D-D2}
 The distribution ${\mathcal D}_{\nu}$, $\nu=1,...,4$,  has the property \rm{$D_2$}.
\end{prop}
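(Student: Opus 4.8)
The plan is to unwind the definition of property $D_2$ for the distribution $\mathcal{D}_\nu$, which is the $(+1)$-eigendistribution of $\mathcal{K}_\nu$ and hence plays the role of the vertical distribution of the Riemannian almost product structure. Property $D_2$ asks that $\alpha(X)=0$ for all $X\in\mathcal{D}_\nu^{\perp}$, where $\alpha(X)=\sum_l G_{\bf t}((D_{E_l}\mathcal{K}_\nu)(E_l),X)$ for a $G_{\bf t}$-orthonormal basis $\{E_l\}$ of $\mathcal{D}_\nu$. Using the identity $G_{\bf t}((D_A\mathcal{K}_\nu)(B),C)=(D_AF_{\bf t,\nu})(B,C)$ already exploited in Lemma~\ref{DAFB}, this rewrites as $\alpha(X)=\sum_l (D_{E_l}F_{\bf t,\nu})(E_l,X)$, and I would note that $\alpha$ is independent of the chosen orthonormal basis.

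First I would fix $\varkappa=(\sigma^{+},\sigma^{-})$ and take the adapted basis $E_1,\dots,E_4$ of $T_pM$ with $\sigma^{\pm}=E_1\wedge E_2\pm E_3\wedge E_4$, so that $P_{\varkappa}E_1=-E_1$, $P_{\varkappa}E_2=-E_2$, $P_{\varkappa}E_3=E_3$, $P_{\varkappa}E_4=E_4$. Since $\mathcal{K}_\nu X^h=(P_{\varkappa}X)^h$, the horizontal part of $\mathcal{D}_\nu$ is spanned by the $G_{\bf t}$-orthonormal vectors $E_3^h,E_4^h$ (the $(+1)$-eigenspace of $P_{\varkappa}$), while its vertical part is a subspace of $\mathcal{V}_{\varkappa}$ depending on $\nu$ (empty when $\nu=3$). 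Splitting $\alpha(X)$ into its horizontal-basis and vertical-basis contributions, I reduce the claim to two cases, according to whether $X\in\mathcal{D}_\nu^{\perp}$ is horizontal (lying in the $(-1)$-eigenspace of $P_{\varkappa}$, spanned by $E_1^h,E_2^h$) or vertical.

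For horizontal $X=Z^h$, the horizontal-basis terms $(D_{E_j^h}F_{\bf t,\nu})(E_j^h,Z^h)$ vanish by Lemma~\ref{D-F}(i); and each vertical-basis term $(D_{V_b}F_{\bf t,\nu})(V_b,Z^h)$ equals $(D_{V_b}F_{\bf t,\nu})(Z^h,V_b)$ by the symmetry of $F_{\bf t,\nu}$ (which holds because $\mathcal{K}_\nu$ is $G_{\bf t}$-self-adjoint), hence vanishes by Lemma~\ref{D-F}(v). For vertical $X$, the vertical-basis terms vanish by Lemma~\ref{D-F}(vi), so only the horizontal-basis terms remain; applying Corollary~\ref{cor-DFhhv} with both its $X$-slot and $Y$-slot set to $E_j$ gives each such term as $-\tfrac12 G_{\bf t}(\mathcal{K}_\nu R(E_j,E_j)\varkappa,X)+\tfrac12 G_{\bf t}(R(E_j,P_{\varkappa}E_j)\varkappa,X)$, and both summands are zero because $R(E_j,E_j)=0$ and $P_{\varkappa}E_j=E_j$ for $j=3,4$.

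Consequently $\alpha(X)=0$ for every $X\in\mathcal{D}_\nu^{\perp}$ and every $\nu=1,\dots,4$, which is precisely property $D_2$. I do not expect a real obstacle: the argument is bookkeeping combined with the vanishing already packaged into Lemma~\ref{D-F} and Corollary~\ref{cor-DFhhv}. The only point demanding care is to keep the two meanings of ``vertical'' apart --- the fibre directions of $\pi\colon\mathscr{P}\to M$ versus the eigendistribution $\mathcal{D}_\nu$ --- and to use that the horizontal basis vectors $E_3^h,E_4^h$ are genuine $(+1)$-eigenvectors of $P_{\varkappa}$, which is exactly what collapses the curvature terms of Corollary~\ref{cor-DFhhv}.
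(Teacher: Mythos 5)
Your proof is correct and follows essentially the same route as the paper: both fix the adapted basis $E_1,\dots,E_4$ with $P_{\varkappa}E_j=E_j$ for $j=3,4$, take the $G_{\bf t}$-orthonormal basis of ${\mathcal D}_{\nu}$ formed by $E_3^h,E_4^h$ together with an orthonormal basis of its vertical part, and kill every term of $\alpha_{\nu}$ using Lemma~\ref{D-F} (your use of Corollary~\ref{cor-DFhhv} for the horizontal-basis terms against vertical $X$ is just Lemma~\ref{D-F}(ii) in the degenerate case $X=Y=E_j$). The only difference is that you spell out the case analysis and the symmetry of $F_{\bf t,\nu}$ that the paper leaves implicit.
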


\begin{proof}
Denote by $\alpha_{\nu}$  the $1$-form corresponding to the
distribution ${\mathcal D}_{\nu}$ via (\ref{alpha}). Let
$\varkappa=(\sigma^{+},\sigma^{-})\in{\mathscr P}$ and set
$P_{\varkappa}=K_{\sigma^{+}}\circ K_{\sigma^{-}}$. Take an oriented
orthonormal basis $E_1,...,E_4$ of $T_{\pi(\varkappa)}M$ such that
$P_{\varkappa}E_i=-E_i$ for $i=1,2$ and $P_{\varkappa}E_j=E_j$ for
$j=3,4$. Let $V^{\pm}_i$, $i=1,2$, be a $g$-orthonormal basis of
${\mathcal V}_{\pm}$.  Then $E^h_3,E^h_4,\frac{1}{\sqrt
t_1}(V^{+}_1,0), \frac{1}{\sqrt t_1}(V^{+}_2,0), \frac{1}{\sqrt
t_2}(0,V^{-}_1), \frac{1}{\sqrt t_2}(0,V^{-}_2)$ is a $G_{\bf
t}$-orthonormal  basis of the fibre of ${\mathcal D}_{1}$ at
$\pi({\varkappa})$, $E^h_3,E^h_4,\frac{1}{\sqrt t_1}(V^{+}_1,0),
\frac{1}{\sqrt t_1}(V^{+}_2,0)$ of ${\mathcal D}_{2}$, $E^h_3,E^h_4$
of ${\mathcal D}_{3}$, and $E^h_3,E^h_4, \frac{1}{\sqrt
t_2}(0,V^{-}_1), \frac{1}{\sqrt t_2}(0,V^{-}_2)$ is a $G_{\bf
t}$-orthonormal basis of the fibre of ${\mathcal D}_{4}$ . Using
these bases, we get $\alpha_{\nu}=0$ by Lemma~\ref{D-F}.
\end{proof}

Remark 2 and Proposition~\ref{D-D2} imply:

\begin{prop}\label{prop-D-3}
The distribution ${\mathcal D}_{\nu}$, $\nu=1,...,4$,  has the
property {\rm{$D_3$}} exactly when it has the property \rm{$D_1$}.
\end{prop}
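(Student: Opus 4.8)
The plan is to obtain the equivalence as an immediate consequence of Remark 2 together with Proposition~\ref{D-D2}, with no further computation on the manifold ${\mathscr P}$ itself.

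First I would recall the purely algebraic fact recorded in Remark 2: for any distribution $\mathfrak{D}$ of a Riemannian almost product manifold, property $D_1$ is equivalent to the simultaneous validity of $D_2$ and $D_3$. The underlying reason is that $D_1$ amounts to the vanishing of the symmetric part of the bilinear map $(A,B)\mapsto (\nabla_A P)(B)$ restricted to $\mathfrak{D}$; decomposing this symmetric part into its trace component (measured by the one-form $\alpha$ of (\ref{alpha})) and its trace-free component, the vanishing of the former is precisely $D_2$ while the vanishing of the latter is precisely $D_3$. Hence $D_1 \Longleftrightarrow (D_2 \text{ and } D_3)$.

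Next I would invoke Proposition~\ref{D-D2}, according to which every distribution ${\mathcal D}_{\nu}$, $\nu=1,\dots,4$, automatically satisfies $D_2$. Substituting this into the equivalence above, the $D_2$ clause is always met, so for these particular distributions the equivalence collapses to $D_1 \Longleftrightarrow D_3$, which is exactly the assertion of the proposition.

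The genuine content of the statement has thus been front-loaded into the two cited results: the nontrivial verification is the explicit vanishing of the one-form $\alpha_{\nu}$ carried out in Proposition~\ref{D-D2} (via the $G_{\bf t}$-orthonormal bases of the fibres of each ${\mathcal D}_{\nu}$ together with Lemma~\ref{D-F}), whereas the present proposition is a formal deduction. Consequently I do not anticipate any real obstacle here; once Remark 2 and Proposition~\ref{D-D2} are in hand, the proof is a one-line implication.
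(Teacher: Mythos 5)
Your proposal is correct and is precisely the paper's own argument: the author derives Proposition~\ref{prop-D-3} directly from Remark~2 together with Proposition~\ref{D-D2}, with no additional computation. Your elaboration of why $D_1\Leftrightarrow(D_2\text{ and }D_3)$ (trace versus trace-free part of the symmetric component of $(A,B)\mapsto(\nabla_AP)(B)$, which takes values in $\mathfrak{D}^{\perp}$) is a sound gloss on Remark~2, so nothing is missing.
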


Lemma~\ref{D-F} imply the following.

\begin{lemma}\label{DtilAFtilB}
Let $\varkappa=(\sigma^{+},\sigma^{-})\in{\mathscr P}$, $X,Y,Z\in
T_{\pi(\varkappa)}M$, and $U,V,W\in{\mathcal V}_{\varkappa}$. Set
$\widetilde A=(X^h_{\varkappa}+U)-{\mathcal
K}_{\nu}(X^h_{\varkappa}+U)$, $\widetilde
B=(Y^h_{\varkappa}+V)-{\mathcal K}_{\nu}(Y^h_{\varkappa}+V)$. Then

$$
\begin{array}{l}
G_{\bf t}((D_{\widetilde A}{\mathcal K}_{\nu})(\widetilde B),Z^h_{\varkappa})\\[8pt]
= -\frac{1}{2}g({\mathcal
R}([\varepsilon_{\nu}-1]t_1\sigma^{+}\times
V^{+}+[\varepsilon_{\nu}(-1)^{\nu}+1]t_2\sigma^{-}\times
V^{-}),X\wedge Z\\[8pt]
\hspace{5.4cm}+X\wedge P_{\varkappa}Z -P_{\varkappa}X\wedge Z - P_{\varkappa}X\wedge P_{\varkappa}Z)\\[8pt]
-\frac{1}{2}g({\mathcal R}([\varepsilon_{\nu}-1]t_1\sigma^{+}\times
U^{+}+[\varepsilon_{\nu}(-1)^{\nu}+1]t_2\sigma^{-}\times
U^{-}),Y\wedge Z \\[8pt]
\hspace{5.4cm}+Y\wedge P_{\varkappa}Z -P_{\varkappa}Y\wedge Z -
P_{\varkappa}Y\wedge P_{\varkappa}Z)\\[8pt]
-g(([\varepsilon_{\nu}-1]K_{\sigma^{-}}K_{U^{+}}-
[\varepsilon_{\nu}(-1)^{\nu}+1]K_{\sigma^{+}}K_{U^{-}})(Y-P_{\varkappa}Y),Z);
\end{array}
$$

$$
\begin{array}{l}
G_{\bf t}((D_{\widetilde A}{\mathcal K}_{\nu})(\widetilde B),W)\\[8pt]
=-\frac{1}{2}g({\mathcal R}([\varepsilon_{\nu}+1]t_1\sigma^{+}\times
W^{+}+[\varepsilon_{\nu}(-1)^{\nu}-1]t_2\sigma^{-}\times W^{-}),X\wedge Y\\[8pt]
\hspace{5.4cm}-X\wedge P_{\varkappa}Y-P_{\varkappa}X\wedge
Y+P_{\varkappa}X\wedge P_{\varkappa}Y),
\end{array}
$$
where $P_{\varkappa}=K_{\sigma^{+}}\circ K_{\sigma^{-}}$.
\end{lemma}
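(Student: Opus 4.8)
The plan is to reproduce the proof of Lemma~\ref{DAFB} almost verbatim, the only structural change being that the $G_{\bf t}$-self-adjoint projector $\mathrm{Id}+{\mathcal K}_{\nu}$ is everywhere replaced by $\mathrm{Id}-{\mathcal K}_{\nu}$. First I would record the algebraic identity
$$
G_{\bf t}((D_{\widetilde A}{\mathcal K}_{\nu})(\widetilde B),C)=(D_{\widetilde A}F_{\bf t,\nu})(\widetilde B,C),
$$
valid for arbitrary vector fields. This is immediate from the definition $F_{\bf t,\nu}(A,B)=G_{\bf t}({\mathcal K}_{\nu}A,B)$, the metricity of $D$, and the $G_{\bf t}$-compatibility of ${\mathcal K}_{\nu}$ (which, together with ${\mathcal K}_{\nu}^{2}=\mathrm{Id}$, makes ${\mathcal K}_{\nu}$ self-adjoint, hence $F_{\bf t,\nu}$ symmetric). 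It is the same identity already exploited in Lemma~\ref{DAFB}, and it reduces the whole computation to evaluating the covariant derivative of $F_{\bf t,\nu}$, for which Lemma~\ref{D-F} and Corollary~\ref{cor-DFhhv} supply every needed building block.

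Next I would split $\widetilde A$ and $\widetilde B$ into horizontal and vertical parts. For $U=(U^{+},U^{-})$ one has ${\mathcal K}_{\nu}U=\varepsilon_{\nu}(U^{+},(-1)^{\nu+1}U^{-})$, so the vertical part of $\widetilde A$ is $U-{\mathcal K}_{\nu}U=((1-\varepsilon_{\nu})U^{+},(1+\varepsilon_{\nu}(-1)^{\nu})U^{-})$, while its horizontal part is $(X-P_{\varkappa}X)^{h}_{\varkappa}$; analogously for $\widetilde B$ with $V$ and $Y$. The factors $1-\varepsilon_{\nu}$ and $1+\varepsilon_{\nu}(-1)^{\nu}$ are exactly what turns the brackets $[\varepsilon_{\nu}+1]$, $[\varepsilon_{\nu}(-1)^{\nu}-1]$ of Lemma~\ref{DAFB} into the present $[\varepsilon_{\nu}-1]$, $[\varepsilon_{\nu}(-1)^{\nu}+1]$, once one uses $\varepsilon_{\nu}^{2}=1$ to simplify coefficients such as $\varepsilon_{\nu}(1-\varepsilon_{\nu})=\varepsilon_{\nu}-1$.

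Then I would expand $(D_{\widetilde A}F_{\bf t,\nu})(\widetilde B,C)$ multilinearly in these components and discard the vanishing contributions. For $C=Z^{h}_{\varkappa}$ only two terms survive: the horizontal-derivative/vertical-second-argument term, evaluated by Lemma~\ref{D-F}(ii) (after using symmetry of $F_{\bf t,\nu}$) and producing the first displayed block in the variable $V$; and the vertical-derivative/horizontal-second-argument term, evaluated by Lemma~\ref{D-F}(iii) and producing the second block in $U$ together with the algebraic $K_{\sigma^{\pm}}K_{U^{\pm}}$ term. All remaining combinations vanish by Lemma~\ref{D-F}(i),(iv),(v),(vi). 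For $C=W$ only the purely horizontal-derivative term survives, again via Lemma~\ref{D-F}(ii), since (iv),(v),(vi) annihilate every term carrying a vertical entry. Substituting $X\mapsto X-P_{\varkappa}X$, $Y\mapsto Y-P_{\varkappa}Y$ and using $P_{\varkappa}^{2}=\mathrm{Id}$, $K_{\sigma^{+}}K_{\sigma^{-}}=P_{\varkappa}$, together with $P_{\varkappa}(Y-P_{\varkappa}Y)=-(Y-P_{\varkappa}Y)$, converts the two-term wedge combinations of (ii)--(iii) into the four-term expressions $X\wedge Z+X\wedge P_{\varkappa}Z-P_{\varkappa}X\wedge Z-P_{\varkappa}X\wedge P_{\varkappa}Z$, etc., displayed in the statement; the curvature symmetries and (\ref{KK}), (\ref{com-anticom}), (\ref{RJ,V}) are used to rewrite the $K_{\sigma^{\pm}}K_{U^{\pm}}$ terms and to pass freely between $g(R(\cdot)\cdot,\cdot)$ and $g({\mathcal R}(\cdot),\cdot\wedge\cdot)$.

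The computation is entirely routine and uses no geometric input beyond Lemma~\ref{D-F}; I expect the only real difficulty to be the sign and coefficient bookkeeping. The delicate point is that the displayed coefficients $[\varepsilon_{\nu}-1]$ and $[\varepsilon_{\nu}(-1)^{\nu}+1]$ do not appear term by term but only emerge after combining the overall factors $-\tfrac12\varepsilon_{\nu}$ and $+\tfrac12$ coming from the two summands of Lemma~\ref{D-F}(ii)--(iii) with the vertical-projection factors $1-\varepsilon_{\nu}$, $1+\varepsilon_{\nu}(-1)^{\nu}$ and with the sign flips produced by $P_{\varkappa}$. Checking that all these contributions assemble into precisely the two formulas of the statement, with the correct signs in front of each $X\wedge Z$, $X\wedge P_{\varkappa}Z$, $P_{\varkappa}X\wedge Z$, $P_{\varkappa}X\wedge P_{\varkappa}Z$ (and the analogous expressions in $Y$), is the step that requires care.
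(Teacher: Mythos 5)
Your proposal is correct and takes essentially the same route as the paper: the paper proves this lemma exactly as it proves Lemma~\ref{DAFB}, namely via the identity $G_{\bf t}((D_{A}{\mathcal K}_{\nu})(B),C)=(D_{A}F_{\bf t,\nu})(B,C)$ together with Lemma~\ref{D-F}, leaving the horizontal/vertical splitting of $\widetilde A,\widetilde B$ and the coefficient bookkeeping implicit as ``a simple computation.'' Your accounting of which terms survive (via the symmetry of $F_{\bf t,\nu}$ and parts (i)--(vi) of Lemma~\ref{D-F}) and of how $\varepsilon_{\nu}(1-\varepsilon_{\nu})=\varepsilon_{\nu}-1$ converts the brackets of Lemma~\ref{DAFB} into $[\varepsilon_{\nu}-1]$, $[\varepsilon_{\nu}(-1)^{\nu}+1]$ is exactly the computation the paper has in mind.
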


\begin{prop}\label{prop D-perp-F}
$(i)$ ~ The distribution ${\mathcal D}^{\perp}_{1}$ has the property
{\rm{F}} if and only if the manifold $(M,g)$ is of constant
curvature.

\noindent $(ii)$ ~ The distribution ${\mathcal D}^{\perp}_{\nu}$
does not have the property {\rm{F}} for $\nu=2,3,4$.
\end{prop}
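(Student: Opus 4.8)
The plan is to reduce the property $\mathrm{F}$ for the distribution $\mathcal D^{\perp}_{\nu}$ to a curvature identity using Lemma~\ref{DtilAFtilB}, exactly as was done for the distribution $\mathcal D_{\nu}$ in Propositions~\ref{prop-D-F}. By definition, $\mathcal D^{\perp}_{\nu}$ is the $(-1)$-eigendistribution of $\mathcal K_{\nu}$, spanned by vectors of the form $\widetilde A=(X^h+U)-\mathcal K_{\nu}(X^h+U)$. The symmetry condition $(\nabla_A P)(B)=(\nabla_B P)(A)$ characterizing $\mathrm{F}$ translates, via $G_{\bf t}((D_{\widetilde A}\mathcal K_{\nu})(\widetilde B),C)=G_{\bf t}((D_{\widetilde B}\mathcal K_{\nu})(\widetilde A),C)$, into vanishing of the antisymmetric part of the expressions in Lemma~\ref{DtilAFtilB}. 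I would test $C=Z^h$ and $C=W$ separately.

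First I would handle the vertical test vector $C=W$. Swapping the roles of $\widetilde A$ and $\widetilde B$ in the second formula of Lemma~\ref{DtilAFtilB} interchanges $X\leftrightarrow Y$, and since the curvature bivector $X\wedge Y-X\wedge P_{\varkappa}Y-P_{\varkappa}X\wedge Y+P_{\varkappa}X\wedge P_{\varkappa}Y$ is antisymmetric under this swap, property $\mathrm{F}$ forces
$$
g\big({\mathcal R}([\varepsilon_{\nu}+1]t_1\sigma^{+}\times W^{+}+[\varepsilon_{\nu}(-1)^{\nu}-1]t_2\sigma^{-}\times W^{-}),\, X\wedge Y-X\wedge P_{\varkappa}Y-P_{\varkappa}X\wedge Y+P_{\varkappa}X\wedge P_{\varkappa}Y\big)=0.
$$
For $\nu=1$ one has $\varepsilon_{\nu}+1=2$ and $\varepsilon_{\nu}(-1)^{\nu}-1=-2$, so both coefficients are nonzero; then I would argue as in Proposition~\ref{prop-D-F}(ii): by polarizing in $W^{+}$ and $W^{-}$ independently and running through the three bases $E_1,E_2,E_3,E_4$, $E_1,E_3,E_4,E_2$, $E_1,E_4,E_2,E_3$ with suitable choices $\varkappa=(s^{+}_i,s^{-}_j)$, the identity is seen to be equivalent to $g({\mathcal R}(s_i^{\pm}),s_j^{\pm})=0$ for $i\neq j$ together with the pairwise equality of the diagonal entries and ${\mathcal B}=0$, i.e. to $(M,g)$ having constant curvature; conversely (\ref{com-anticom}) and (\ref{KK}) show constant curvature suffices. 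For $\nu=2,3,4$ I would instead exhibit an explicit counterexample: choosing an orthonormal basis $E_1,\dots,E_4$, setting $s_i^{\pm}$ by (\ref{s-basis}) and picking concrete $\varkappa=(s_1^{+},s_1^{-})$, $X,Y$ and a single nonzero $W^{\pm}$, the vertical-test equation reduces to a numerical relation among $\varepsilon_{\nu}$ and $(-1)^{\nu}$ that fails, showing $\mathrm{F}$ cannot hold regardless of the curvature of $(M,g)$.

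Second I would examine the horizontal test vector $C=Z^h$ using the first formula of Lemma~\ref{DtilAFtilB}; here the computation only serves to confirm consistency for $\nu=1$ (it should also reduce to constant curvature, or be subsumed by the $C=W$ computation) and provides an alternative source of contradiction for $\nu=2,3,4$. The main obstacle I anticipate is the $\nu=2,3,4$ cases: one must verify that the failure of property $\mathrm{F}$ is genuinely forced by the algebraic coefficients $[\varepsilon_{\nu}\pm 1]$ and not accidentally cancelled by a special curvature, so the counterexample has to be chosen so that the purely algebraic term $g((\cdots K_{\sigma^{-}}K_{U^{+}}\cdots)(Y-P_{\varkappa}Y),Z)$ in the horizontal formula — which is independent of ${\mathcal R}$ — survives; this nonvanishing algebraic contribution is what makes property $\mathrm{F}$ impossible there. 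Pinning down this term with the commutation/anti-commutation rules (\ref{com-anticom}) and the product formula (\ref{KK}), and checking that for $\nu=1$ it is exactly the piece that can be balanced against the curvature term while for $\nu=2,3,4$ it cannot, is the delicate point of the argument.
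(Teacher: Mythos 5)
Your framework is the paper's: apply Lemma~\ref{DtilAFtilB}, impose the symmetry $G_{\bf t}((D_{\widetilde A}{\mathcal K}_{\nu})(\widetilde B),C)=G_{\bf t}((D_{\widetilde B}{\mathcal K}_{\nu})(\widetilde A),C)$, and test against $C=Z^h$ and $C=W$ separately. Your treatment of $\nu=1$ is essentially correct: since $[\varepsilon_1-1]=[\varepsilon_1(-1)+1]=0$, the entire first formula of Lemma~\ref{DtilAFtilB} vanishes (so the $C=Z^h$ test is vacuous, not "also reducible to constant curvature" as you hedge), and property F reduces to the curvature identity $g({\mathcal R}(t_1\sigma^{+}\times W^{+}-t_2\sigma^{-}\times W^{-}),X\wedge Y-X\wedge P_{\varkappa}Y-P_{\varkappa}X\wedge Y+P_{\varkappa}X\wedge P_{\varkappa}Y)=0$, equivalent, as in Proposition~\ref{prop-D-F}, to $g({\mathcal R}(\sigma^{\pm}\times W^{\pm}),X\wedge Y+P_{\varkappa}X\wedge P_{\varkappa}Y)=0$, i.e.\ to constant curvature.

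The genuine gap is your primary argument for $\nu=2,3,4$. You propose to refute property F via the vertical test $C=W$, claiming it "reduces to a numerical relation among $\varepsilon_{\nu}$ and $(-1)^{\nu}$ that fails regardless of the curvature." This cannot work: that identity is linear in ${\mathcal R}$, so it holds identically whenever $(M,g)$ is flat, while the statement asserts failure of F for \emph{every} $(M,g)$; worse, for $\nu=3$ both coefficients $[\varepsilon_3+1]$ and $[\varepsilon_3(-1)^3-1]$ vanish, so the vertical test imposes no condition whatsoever. The only viable source of contradiction is the one you relegate to an "alternative": the algebraic term in the horizontal test. Note also that your worry about cancellation against a special curvature is unfounded, because the two curvature terms in the first formula of Lemma~\ref{DtilAFtilB} are interchanged by the swap $(X,U)\leftrightarrow(Y,V)$, hence drop out of the symmetry condition entirely; the $C=Z^h$ symmetry condition \emph{is} the curvature-free identity $g(([\varepsilon_{\nu}-1]K_{\sigma^{-}}K_{U^{+}}-[\varepsilon_{\nu}(-1)^{\nu}+1]K_{\sigma^{+}}K_{U^{-}})(Y-P_{\varkappa}Y),Z)=g(([\varepsilon_{\nu}-1]K_{\sigma^{-}}K_{V^{+}}-[\varepsilon_{\nu}(-1)^{\nu}+1]K_{\sigma^{+}}K_{V^{-}})(X-P_{\varkappa}X),Z)$. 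This is exactly how the paper argues: taking $\varkappa=(s_1^{+},s_1^{-})$, $X=0$, $V=0$, $U=(s_2^{+},s_2^{-})$, $Y=E_2$, $Z=E_3$, the right-hand side is zero while both $K_{s_1^{-}}K_{s_2^{+}}$ and $K_{s_1^{+}}K_{s_2^{-}}$ send $E_2$ to $-E_3$ and $Y-P_{\varkappa}Y=2E_2$, so the left-hand side is a nonzero multiple of $[\varepsilon_{\nu}(-1)^{\nu}+1]-[\varepsilon_{\nu}-1]$, which equals $2,4,2$ for $\nu=2,3,4$ respectively. To repair your proof, promote this computation from fallback to the actual argument for part (ii).
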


\begin{proof}
By Lemma~\ref{DtilAFtilB}, the distribution ${\mathcal
D}^{\perp}_{\nu}$ has the property {\rm{F}} if and only if the
following two identities hold:
\begin{equation}\label{perp-F-eq1}
\begin{array}{c}
g(([\varepsilon_{\nu}-1]K_{\sigma^{-}}K_{U^{+}}-
[\varepsilon_{\nu}(-1)^{\nu}+1]K_{\sigma^{+}}K_{U^{-}})(Y-P_{\varkappa}Y),Z)\\[8pt]
=g(([\varepsilon_{\nu}-1]K_{\sigma^{-}}K_{V^{+}}-
[\varepsilon_{\nu}(-1)^{\nu}+1]K_{\sigma^{+}}K_{V^{-}})(X-P_{\varkappa}X),Z),\\[8pt]
g({\mathcal R}([\varepsilon_{\nu}+1]t_1\sigma^{+}\times
W^{+}+[\varepsilon_{\nu}(-1)^{\nu}-1]t_2\sigma^{-}\times
W^{-}),X\wedge Y-X\wedge
P_{\varkappa}Y\\[8pt]
\hspace{7cm}-P_{\varkappa}X\wedge Y+P_{\varkappa}X\wedge
P_{\varkappa}Y)=0
\end{array}
\end{equation}
for every $\varkappa=(\sigma^{+},\sigma^{-})\in{\mathscr P}$,
$X,Y,Z\in T_{\pi(\varkappa)}M$, $U,V,W\in{\mathcal V}_{\varkappa}$.

Let $E_1,...,E_4$ be an oriented orthonormal basis of a tangent
space $T_{p}M$ and define $s_i^{\pm}$, $i=1,2,3$, by
(\ref{s-basis}).

If $\nu=2,3,4$, the first identity of (\ref{perp-F-eq1}) does not
hold for $\varkappa=(s_1^{+},s_1^{-})$, $X=0$,
$U=(s_2^{+},s_2^{-})$, $Y=E_2$, $Z=E_3$.

If $\nu=1$,  (\ref{perp-F-eq1}) reduces to
$$
\begin{array}{l}
g({\mathcal R}(t_1\sigma^{+}\times W^{+}-t_2\sigma^{-}\times
W^{-}),X\wedge Y-X\wedge
P_{\varkappa}Y\\[8pt]
\hspace{6.7cm}-P_{\varkappa}X\wedge Y+P_{\varkappa}X\wedge
P_{\varkappa}Y)=0.
\end{array}
$$
This is equivalent to the identities
$$
g({\mathcal R}(\sigma^{\pm}\times W^{\pm}), X\wedge
Y+P_{\varkappa}X\wedge P_{\varkappa}Y)=0.
$$
As we have seen in the proof of Proposition~ \ref{prop-D-F},  the
latter identities are satisfied if and only if the manifold $(M,g)$
is of constant curvature.

\end{proof}

\begin{prop}\label{prop D-perp-1}
{\rm{(I)}}~The distribution ${\mathcal D}_{\nu}^{\perp}$,
$\nu=2,3,4$, of the almost product structure ${\mathcal K}_{\nu}$
has the property {\rm{$D_1$}} if and only if:

$(i)$~  $(M,g)$ is self-dual and Einstein  with positive  scalar
curvature $s$, and $t_2=\displaystyle{\frac{6}{s}}$, in the case
$\nu=2$  (no condition on $t_1>0$);

$(ii)$~  $(M,g)$ is of  positive constant sectional curvature $\chi$
and $t_1=t_2=\displaystyle{\frac{3}{8\chi}}$, in the case $\nu=3$;

\smallskip

$(iii)$~ $(M,g)$ is anti-self-dual and Einstein  with positive
scalar curvature $s$, and $t_1=\displaystyle{\frac{6}{s}}$, in the
case $\nu=4$  (no condition on $t_2>0$).

\smallskip

{\rm{(II)}}~ The distribution ${\mathcal D}_{1}^{\perp}$ has the
property {\rm{$D_1$}}.

\end{prop}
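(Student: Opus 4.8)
The plan follows the same pattern used in Proposition~\ref{prop-D-1}, but now applied to the orthogonal complement $\mathcal{D}_\nu^\perp$, on which $\mathcal{K}_\nu=-Id$. First I would invoke Lemma~\ref{DtilAFtilB}: the vectors $\widetilde A$, $\widetilde B$ there are precisely the generic elements of $\mathcal{D}_\nu^\perp$ (since $\widetilde A=(X^h+U)-\mathcal{K}_\nu(X^h+U)$ lies in the $(-1)$-eigenspace of $\mathcal{K}_\nu$), so the property $D_1$ for $\mathcal{D}_\nu^\perp$ is equivalent to the vanishing of the symmetric-in-$(\widetilde A,\widetilde B)$ part of $(D_{\widetilde A}\mathcal{K}_\nu)(\widetilde B)$. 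Concretely, $D_1$ says $(D_{\widetilde A}\mathcal{K}_\nu)(\widetilde B)=-(D_{\widetilde B}\mathcal{K}_\nu)(\widetilde A)$, which I would split into a horizontal test (pairing against $Z^h_\varkappa$) and a vertical test (pairing against $W$), using the two formulas of Lemma~\ref{DtilAFtilB}.

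For part (II), the structure $\mathcal{K}_1$ has $\varepsilon_1=1$ and $(-1)^{\nu+1}=1$, so in the first formula of Lemma~\ref{DtilAFtilB} the coefficient $[\varepsilon_\nu-1]$ vanishes and $[\varepsilon_\nu(-1)^\nu+1]=[-1+1]=0$ as well; likewise in the second formula $[\varepsilon_\nu(-1)^\nu-1]=-2$ but the accompanying $\mathcal{R}$-expression must be checked. I expect that for $\nu=1$ both the horizontal and vertical obstructions collapse identically, so $\mathcal{D}_1^\perp$ has $D_1$ with no curvature or parameter condition whatsoever; this is the painless half and I would dispatch it by simply reading off the coefficients.

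The substantive part is (I), $\nu=2,3,4$. Here I would symmetrize each formula of Lemma~\ref{DtilAFtilB} and demand the result be skew in $(\widetilde A,\widetilde B)$, which, after the usual reductions (testing against a frame, replacing $\sigma^-,W^-$ by their negatives to decouple the $\Lambda^2_+$ and $\Lambda^2_-$ contributions, exactly as in the proof of Proposition~\ref{prop-D-1}), reduces to two scalar identities of the same shape as (\ref{D-1}): a relation forcing $t_i\,g(\mathcal{R}(s_i^\pm),s_j^\pm)=\delta_{ij}$ together with $g(\mathcal{R}(s_i^\pm),s_j^\mp)=0$. The curvature decomposition (\ref{dec}) and $Trace\,\mathcal{W}_\pm=0$ then pin down the scalar curvature through $t_i=\tfrac{6}{s}$ and force the vanishing of the relevant self-dual (or anti-self-dual) Weyl part and of $\mathcal{B}$, giving the stated Einstein plus (anti-)self-duality conclusions; the dimension count $\varepsilon_\nu(-1)^\nu\pm1$ selects which of $\Lambda^2_+$ or $\Lambda^2_-$ is constrained and hence whether $\nu=2$ yields self-dual ($t_2=6/s$) or $\nu=4$ yields anti-self-dual ($t_1=6/s$), while $\nu=3$, where both identities are active, forces both Weyl halves and $\mathcal{B}$ to vanish \emph{and} $t_1=t_2$, i.e.\ positive constant curvature $\chi$ with $t_1=t_2=\tfrac{3}{8\chi}$ (using $s=12\chi$). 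The converse in each case is a direct verification via (\ref{cs}), (\ref{com-anticom}), and (\ref{KK}), just as before.

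The main obstacle I anticipate is purely bookkeeping: correctly tracking the sign factors $\varepsilon_\nu$ and $(-1)^\nu$ through the symmetrization so that the roles of $\Lambda^2_+$ and $\Lambda^2_-$ (and thus of $t_1$ versus $t_2$ and of $\mathcal{W}_+$ versus $\mathcal{W}_-$) come out matched to the three cases. The underlying analytic content is identical to Proposition~\ref{prop-D-1}; what changes is only that $\mathcal{D}_\nu^\perp$ swaps the two sign-brackets relative to $\mathcal{D}_\nu$, so the case $\nu=2$ here mirrors the anti-self-dual case $\nu=4$ there, and vice versa, which is exactly why the self-dual/anti-self-dual labels are interchanged in the statement.
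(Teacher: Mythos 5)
Your proposal is correct and follows essentially the same route as the paper: invoke Lemma~\ref{DtilAFtilB}, reduce property $D_1$ for $\mathcal{D}_\nu^{\perp}$ to two sign-bracketed scalar identities mirroring (\ref{D-1}), and import the curvature analysis from the proof of Proposition~\ref{prop-D-1}, with the brackets $[\varepsilon_{\nu}-1]$ and $[\varepsilon_{\nu}(-1)^{\nu}+1]$ selecting which of $\Lambda^2_{\pm}$ (hence which of $t_1,\mathcal{W}_{+}$ or $t_2,\mathcal{W}_{-}$) is constrained, exactly as in the paper. The one step you deferred — the vertical obstruction — in fact collapses identically for every $\nu$, not just $\nu=1$: the second formula of Lemma~\ref{DtilAFtilB} does not involve $U,V$ and its wedge expression is skew under $X\leftrightarrow Y$, so its symmetrization in $(\widetilde A,\widetilde B)$ vanishes automatically, which is precisely what the paper uses tacitly when its condition (\ref{perp-D-1}) retains only the pairing against horizontal vectors.
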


\begin{proof}
By Lemma~\ref{DtilAFtilB},  ${\mathcal D}_{\nu}^{\perp}$ has the
property  \rm {$D_1$} if and only if
\begin{equation}\label{perp-D-1}
\begin{array}{l}
g({\mathcal R}([\varepsilon_{\nu}-1]t_1\sigma^{+}\times
U^{+}+[\varepsilon_{\nu}(-1)^{\nu}+1]t_2\sigma^{-}\times
U^{-}),X\wedge Z+X\wedge
P_{\varkappa}Z\\[8pt]
\hspace{7.8cm}-P_{\varkappa}X\wedge Z -
P_{\varkappa}X\wedge P_{\varkappa}Z)\\[8pt]
+g(([\varepsilon_{\nu}-1]K_{\sigma^{-}}K_{U^{+}}-
[\varepsilon_{\nu}(-1)^{\nu}+1]K_{\sigma^{+}}K_{U^{-}})(X-P_{\varkappa}X),Z)=0
\end{array}
\end{equation}
for every $\varkappa=(\sigma^{+},\sigma^{-})\in{\mathscr P}$,
$U^{\pm}\in \Lambda^2_{\pm}T_{\pi(\varkappa)}M$ with
$U^{\pm}\perp\sigma^{\pm}$, and  $X,Z\in T_{\pi(\varkappa)}M$.

This condition is equivalent to
$$
\begin{array}{l}
[\varepsilon_{\nu}-1]\big\{t_1g({\mathcal R}(\sigma^{+}\times U^{+}),X\wedge Z-P_{\varkappa}X\wedge P_{\varkappa}Z)\\[8pt]
\hspace{8.1cm}-g(K_{\sigma^{+}}K_{U^{+}}X,Z)\big\}=0,\\[8pt]
[\varepsilon_{\nu}(-1)^{\nu}+1]\big\{t_2g({\mathcal
R}(\sigma^{-}\times U^{-}),X\wedge Z-P_{\varkappa}X\wedge
P_{\varkappa}Z)\\[8pt]
\hspace{8.1cm} +g(K_{\sigma^{-}}K_{U^{-}}X,Z)\big\}=0.
\end{array}
$$
Obviously, if $\nu=1$ these conditions are satisfied, if $\nu=2$ the
first identity trivially hods and if $\nu=4$ the second one holds.
The result follows from the proof of Proposition~\ref{prop-D-1}.

\end{proof}

Lemma~\ref{D-F} easily implies:

\begin{prop}
 The distribution ${\mathcal D}_{\nu}^{\perp}$  has the property \rm{$D_2$}.
\end{prop}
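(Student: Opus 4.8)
The plan is to treat this as the orthogonal-complement analogue of Proposition~\ref{D-D2} and to run the same computation, now with the roles of the eigenspaces reversed. First I fix $\varkappa=(\sigma^{+},\sigma^{-})\in{\mathscr P}$, set $P_{\varkappa}=K_{\sigma^{+}}\circ K_{\sigma^{-}}$, and choose an oriented orthonormal basis $E_1,\dots,E_4$ of $T_{\pi(\varkappa)}M$ with $P_{\varkappa}E_i=-E_i$ for $i=1,2$ and $P_{\varkappa}E_j=E_j$ for $j=3,4$, together with $g$-orthonormal bases $V^{+}_1,V^{+}_2$ and $V^{-}_1,V^{-}_2$ of ${\mathcal V}_{+}$ and ${\mathcal V}_{-}$. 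Since ${\mathcal K}_{\nu}X^h=(P_{\varkappa}X)^h$ for every $\nu$, the horizontal part of the $(-1)$-eigenspace ${\mathcal D}_{\nu}^{\perp}$ is always spanned by $E_1^h,E_2^h$, while the vertical part depends on $\nu$. Thus a $G_{\bf t}$-orthonormal basis of the fibre of ${\mathcal D}_{\nu}^{\perp}$ is $E_1^h,E_2^h$ for $\nu=1$; $E_1^h,E_2^h,\frac{1}{\sqrt{t_2}}(0,V^{-}_1),\frac{1}{\sqrt{t_2}}(0,V^{-}_2)$ for $\nu=2$; $E_1^h,E_2^h,\frac{1}{\sqrt{t_1}}(V^{+}_1,0),\frac{1}{\sqrt{t_1}}(V^{+}_2,0),\frac{1}{\sqrt{t_2}}(0,V^{-}_1),\frac{1}{\sqrt{t_2}}(0,V^{-}_2)$ for $\nu=3$; and $E_1^h,E_2^h,\frac{1}{\sqrt{t_1}}(V^{+}_1,0),\frac{1}{\sqrt{t_1}}(V^{+}_2,0)$ for $\nu=4$.

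Denote by $\alpha_{\nu}^{\perp}$ the $1$-form attached to ${\mathcal D}_{\nu}^{\perp}$ via (\ref{alpha}). Using the identity $G_{\bf t}((D_{A}{\mathcal K}_{\nu})(B),C)=(D_{A}F_{\bf t,\nu})(B,C)$, I write $\alpha_{\nu}^{\perp}(X)=\sum_{l}(D_{E_l}F_{\bf t,\nu})(E_l,X)$ and must show that it vanishes for every $X$ in $({\mathcal D}_{\nu}^{\perp})^{\perp}={\mathcal D}_{\nu}$. Because ${\mathcal K}_{\nu}$ is $G_{\bf t}$-self-adjoint, the tensor $F_{\bf t,\nu}$ is symmetric, so $(D_{E_l}F_{\bf t,\nu})$ is symmetric in its last two arguments. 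Splitting the basis vectors $E_l$ into the horizontal ones $E_1^h,E_2^h$ and the vertical ones, and taking $X$ either horizontal ($E_3^h$ or $E_4^h$) or vertical, I invoke Lemma~\ref{D-F}: the purely horizontal terms vanish by (i), the terms with $E_l$ vertical and $X$ horizontal vanish by (v) after using the symmetry just mentioned, and the terms with $E_l$ vertical and $X$ vertical vanish by (vi). The only terms that can survive are the $(D_{E_i^h}F_{\bf t,\nu})(E_i^h,X)$ with $i\in\{1,2\}$ and $X$ a vertical vector of ${\mathcal D}_{\nu}$.

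It then remains to evaluate these surviving terms by formula (ii) of Lemma~\ref{D-F} with $X=Y=E_i$. The first summand there carries the factor $E_i\wedge E_i=0$, and the second carries $E_i\wedge K_{\sigma^{+}}K_{\sigma^{-}}E_i=E_i\wedge P_{\varkappa}E_i=E_i\wedge(-E_i)=0$, since $P_{\varkappa}E_i=-E_i$ for $i=1,2$. Hence every surviving term vanishes, so $\alpha_{\nu}^{\perp}=0$ for all $\nu$ and ${\mathcal D}_{\nu}^{\perp}$ has the property $D_2$. There is no genuine obstacle beyond a careful bookkeeping of the six cases of Lemma~\ref{D-F}; the decisive simplification is exactly the one used in Proposition~\ref{D-D2}, namely that wedging a horizontal basis vector of the distribution against its $P_{\varkappa}$-image produces the factor $E_i\wedge P_{\varkappa}E_i=0$.
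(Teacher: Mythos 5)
Your proof is correct and takes essentially the same route as the paper: the paper disposes of this proposition with the remark that it follows easily from Lemma~\ref{D-F}, mirroring the proof of Proposition~\ref{D-D2}, and your computation is exactly that argument with the $G_{\bf t}$-orthonormal bases of the fibres of ${\mathcal D}_{\nu}^{\perp}$ (which you identify correctly for all four $\nu$) in place of those of ${\mathcal D}_{\nu}$. Your explicit bookkeeping --- the symmetry of $F_{\bf t,\nu}$, and the vanishing of $E_i\wedge E_i$ and $E_i\wedge P_{\varkappa}E_i$ for $i=1,2$ in formula (ii) --- merely fills in details the paper leaves implicit.
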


\begin{prop}
The distribution ${\mathcal D}_{\nu}^{\perp}$  has the property
{\rm{$D_3$}} exactly when it has the property \rm{$D_1$}.
\end{prop}

\section {The Naveira classes of the manifold  ${\mathscr P}$}

The results in the preceding section  allow one to determine the
Naveira classes of $({\mathscr P},{\mathcal K}_{\nu}, G_{\bf t})$,
${\bf t}=(t_1,t_2)$, as follows.

\medskip

\medskip

\begin{thm}\label{K1}
The Riemannian almost product manifold $({\mathscr P},{\mathcal
K}_{1}, G_{\bf t})$ belongs to the Naveira class ${\mathscr
W}_1\oplus {\mathscr W}_2\oplus {\mathscr W}_4\oplus {\mathscr W}_5$
or to the class ${\mathscr W}_1\oplus {\mathscr W}_4$.

\smallskip

$({\mathscr P},{\mathcal K}_{1},G_{\bf t})\in {\mathscr W}_1\oplus
{\mathscr W}_4$ if and only if $(M,g)$ is of positive constant
curvature $\chi$ and $t_1=t_2=\frac{3}{8\chi}$.

\end{thm}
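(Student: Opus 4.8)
The plan is to classify the structure $({\mathscr P},{\mathcal K}_1,G_{\bf t})$ by determining, for each of the two distributions ${\mathcal D}_1$ (vertical, where ${\mathcal K}_1=Id$) and its orthogonal complement ${\mathcal D}_1^{\perp}$, exactly which of the Gil-Medrano properties $F$, $D_1$, $D_2$, $D_3$ hold, and then translating the resulting combination into the Naveira class via the standard dictionary relating the eight summands ${\mathscr W}_1,\dots$ to these algebraic conditions. The heavy lifting has already been done in the preceding propositions, so the proof is essentially an assembly of those results together with the defining case $\nu=1$, for which $\varepsilon_1=1$ and $(-1)^{\nu+1}=1$.

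First I would record what is already known for $\nu=1$. By Proposition~\ref{prop-D-F}(i) the vertical distribution ${\mathcal D}_1$ does \emph{not} have property $F$; by Proposition~\ref{prop-D-2} and Proposition~\ref{prop-D-3} it always has $D_2$, and it has $D_3$ (equivalently $D_1$, by Remark~2) precisely when $(M,g)$ has positive constant curvature $\chi$ with $t_1=t_2=3/(8\chi)$, by Proposition~\ref{prop-D-1}(I)(i). Dually, for ${\mathcal D}_1^{\perp}$: Proposition~\ref{prop D-perp-F}(i) says $F$ holds iff $(M,g)$ is of constant curvature; Proposition~\ref{prop D-perp-1}(II) says $D_1$ always holds; and the $D_2$, $D_3$ propositions give $D_2$ always and $D_3\Leftrightarrow D_1$, hence $D_3$ always. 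So the horizontal distribution is always in the best possible state short of $F$, while the vertical side is the one that degenerates. I would lay these facts out in a short table of which properties hold unconditionally versus conditionally.

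Next I would feed this profile into the Naveira classification. The key point is that ${\mathcal D}_1^{\perp}$ always satisfies $D_1$ (so both $D_2$ and $D_3$), meaning the horizontal contribution to $\nabla\Phi$ lives in the $F$-type part of the decomposition, while the vertical distribution always satisfies $D_2$ but fails $F$ and satisfies $D_1$ only in the special geometric case. Matching the surviving components of the covariant derivative of the fundamental form against Naveira's irreducible pieces pins the general structure to ${\mathscr W}_1\oplus {\mathscr W}_2\oplus {\mathscr W}_4\oplus {\mathscr W}_5$, and the vanishing of the two extra components ${\mathscr W}_2$ and ${\mathscr W}_5$ occurs exactly when the vertical distribution acquires property $D_1$ together with the horizontal distribution acquiring $F$. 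I would then observe that both of these upgrades are governed by the \emph{same} condition: ${\mathcal D}_1$ having $D_1$ forces positive constant curvature with $t_1=t_2=3/(8\chi)$ (Proposition~\ref{prop-D-1}(I)(i)), and constant curvature is in turn exactly what Proposition~\ref{prop D-perp-F}(i) requires for ${\mathcal D}_1^{\perp}$ to have $F$. Hence the collapse to ${\mathscr W}_1\oplus {\mathscr W}_4$ happens if and only if $(M,g)$ is of positive constant curvature $\chi$ and $t_1=t_2=3/(8\chi)$, which is the asserted equivalence.

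The main obstacle is bookkeeping rather than analysis: I must make sure the correspondence between the Gil-Medrano conditions on the pair $({\mathcal D}_1,{\mathcal D}_1^{\perp})$ and the specific Naveira summands ${\mathscr W}_i$ is applied with the correct roles of vertical and horizontal, since the classification is not symmetric in the two distributions and the indexing of the ${\mathscr W}_i$ depends on this convention. I would fix that convention once, verify it against the dimensions $(6,2)$ of the eigenspaces of ${\mathcal K}_1$ computed earlier, and then read off the class. The only genuine verification needed beyond citing earlier results is confirming that no further component survives or vanishes unexpectedly, i.e.\ that the $F$-failure of ${\mathcal D}_1$ contributes precisely to ${\mathscr W}_1$ in all cases so that ${\mathscr W}_1$ is always present, which is consistent with the structure never being integrable (Proposition~\ref{non int}).
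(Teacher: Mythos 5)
Your overall strategy is the paper's own: Theorem~\ref{K1} is meant to be read off from the Section~\ref{Gil-Medrano} propositions (never $F$ for ${\mathcal D}_1$; $D_2$ always for both distributions; $D_1$ for ${\mathcal D}_1$ iff positive constant curvature and $t_1=t_2=3/(8\chi)$; $D_1$ always for ${\mathcal D}_1^{\perp}$; $F$ for ${\mathcal D}_1^{\perp}$ iff constant curvature), and you assemble exactly these facts. The gap is in the translation step, which you yourself single out as the crux and then carry out incorrectly. The convention forced by this paper (cross-check Theorems~\ref{K2}--\ref{K4} against Theorem~\ref{all for Nav}) is: ${\mathscr W}_1,{\mathscr W}_2,{\mathscr W}_3$ are the skew, traceless-symmetric and trace parts of the covariant derivative of the structure restricted to ${\mathcal D}_{\nu}$, and ${\mathscr W}_4,{\mathscr W}_5,{\mathscr W}_6$ the same parts on ${\mathcal D}_{\nu}^{\perp}$; thus ${\mathscr W}_1$ (resp.\ ${\mathscr W}_4$) is killed by property $F$, ${\mathscr W}_2$ (resp.\ ${\mathscr W}_5$) by $D_3$, and ${\mathscr W}_3$ (resp.\ ${\mathscr W}_6$) by $D_2$. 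Hence membership in ${\mathscr W}_1\oplus{\mathscr W}_2\oplus{\mathscr W}_4\oplus{\mathscr W}_5$ means exactly ``both distributions minimal ($D_2$)'', which holds always by Proposition~\ref{D-D2} and its unnumbered ${\mathcal D}^{\perp}$ analogue; and membership in ${\mathscr W}_1\oplus{\mathscr W}_4$ means exactly ``both distributions have $D_1$'' (both totally geodesic). Since ${\mathcal D}_1^{\perp}$ has $D_1$ unconditionally (Proposition~\ref{prop D-perp-1}(II)), the latter reduces to $D_1$ for ${\mathcal D}_1$, i.e.\ to the stated curvature and parameter condition by Proposition~\ref{prop-D-1}(I)(i). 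Property $F$ plays no role at all in either class appearing in the statement.

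Your dictionary instead ties ${\mathscr W}_5$ to ${\mathcal D}_1^{\perp}$ acquiring $F$, so you characterize membership in ${\mathscr W}_1\oplus{\mathscr W}_4$ as ``${\mathcal D}_1$ has $D_1$ \emph{and} ${\mathcal D}_1^{\perp}$ has $F$''. That is not what ${\mathscr W}_5$ is: $F$ governs ${\mathscr W}_4$ (which is permitted to be nonzero inside the class ${\mathscr W}_1\oplus{\mathscr W}_4$), while ${\mathscr W}_5$ is the $D_3$-component of ${\mathcal D}_1^{\perp}$ and vanishes identically for $\nu=1$. Your final equivalence comes out right only by a coincidence of implications: $D_1$ for ${\mathcal D}_1$ forces constant curvature, which by Proposition~\ref{prop D-perp-F}(i) forces $F$ for ${\mathcal D}_1^{\perp}$, so your (incorrect) characterization happens to define the same set of manifolds in this one instance. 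That this is a genuine error and not a harmless relabeling can be seen in two ways. First, your dictionary is incompatible with the neighboring theorems: for instance in Theorem~\ref{K2}(i) the class ${\mathscr W}_1\oplus{\mathscr W}_4\oplus{\mathscr W}_5$ occurs even though ${\mathcal D}_2^{\perp}$ never has $F$ (Proposition~\ref{prop D-perp-F}(ii)), which is impossible if ${\mathscr W}_5$ were the $F$-obstruction of the complementary distribution. Second, it cannot account for the component count in your own generic statement: only three components survive generically (skew on ${\mathcal D}_1$, traceless symmetric on ${\mathcal D}_1$, skew on ${\mathcal D}_1^{\perp}$), so no consistent assignment can make ``the surviving components'' fill all four summands ${\mathscr W}_1,{\mathscr W}_2,{\mathscr W}_4,{\mathscr W}_5$; the four-summand class is correct only as the (non-sharp) class ``both minimal'', justified by $D_2$ alone. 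The proof needs the dictionary fixed before the read-off step is legitimate.
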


\begin{thm}\label{K2}
The Riemannian almost product manifold $({\mathscr P},{\mathcal
K}_{2}, G_{\bf t})$ belongs to the Naveira classes ${\mathscr
W}_1\oplus {\mathscr W}_2\oplus {\mathscr W}_4\oplus {\mathscr
W}_5$, ${\mathscr W}_1\oplus {\mathscr W}_4\oplus {\mathscr W}_5$,
or ${\mathscr W}_1\oplus {\mathscr W}_4$.

\smallskip

$(i)$~$({\mathscr P},{\mathcal K}_{2}, G_{\bf t})\in {\mathscr
W}_1\oplus {\mathscr W}_4\oplus {\mathscr W}_5$ if and only if
$(M,g)$ is anti-self-dual and Einstein with positive  scalar
curvature $s$, and $t_1=\displaystyle{\frac{6}{s}}$ (no condition on
$t_2>0$).

\smallskip

$(ii)$~ $({\mathscr P},{\mathcal K}_{2},G_{\bf t})\in {\mathscr
W}_1\oplus {\mathscr W}_4$ if and only if $(M,g)$ is of positive
constant curvature $\chi$ and $t_1=t_2=\frac{3}{8\chi}$.

\end{thm}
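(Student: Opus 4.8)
The plan is to read off the Naveira class of $({\mathscr P},{\mathcal K}_2,G_{\bf t})$ from the Gil--Medrano data of the two distributions ${\mathcal D}_2$ and ${\mathcal D}_2^{\perp}$, using the correspondence between the conditions $F,D_1,D_2,D_3$ on the vertical and horizontal distributions and the Naveira classes recalled at the start of Section~\ref{Gil-Medrano}. Under the labelling in which $\mathscr W_1,\mathscr W_4$ are the antisymmetric (vertical, resp.\ horizontal) summands of $\nabla{\mathcal K}_2$, $\mathscr W_3,\mathscr W_6$ the trace summands, and $\mathscr W_2,\mathscr W_5$ the tracefree symmetric summands, the component in a given summand is nonzero precisely when the matching Gil--Medrano condition fails: $F$ controls $\mathscr W_1,\mathscr W_4$, the condition $D_2$ controls $\mathscr W_3,\mathscr W_6$, and $D_3$ controls $\mathscr W_2,\mathscr W_5$ (recall from Remark~2 that $D_1\Leftrightarrow D_2\wedge D_3$, so under $D_2$ one has $D_1\Leftrightarrow D_3$). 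Thus the first task is purely to collect which of these conditions hold for ${\mathcal D}_2$ and ${\mathcal D}_2^{\perp}$.

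I would assemble the input from Section~\ref{Gil-Medrano}. By Proposition~\ref{prop-D-F}(i) and Proposition~\ref{prop D-perp-F}(ii) neither ${\mathcal D}_2$ nor ${\mathcal D}_2^{\perp}$ has the property $F$, so $\mathscr W_1$ and $\mathscr W_4$ are always present. By Proposition~\ref{D-D2} and the corresponding statement for ${\mathcal D}_{\nu}^{\perp}$, both distributions have $D_2$, so $\mathscr W_3,\mathscr W_6$ always vanish. Finally, Proposition~\ref{prop-D-1}(I)(ii) together with Proposition~\ref{prop-D-3} shows that ${\mathcal D}_2$ has $D_3$ (equivalently $D_1$) exactly when $(M,g)$ is anti-self-dual and Einstein with $s>0$ and $t_1=6/s$, while Proposition~\ref{prop D-perp-1}(I)(i) with its $D_3$ companion shows that ${\mathcal D}_2^{\perp}$ has $D_3$ exactly when $(M,g)$ is self-dual and Einstein with $s>0$ and $t_2=6/s$. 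Hence $\mathscr W_2$ is present iff $(M,g)$ is \emph{not} anti-self-dual Einstein with $t_1=6/s$, and $\mathscr W_5$ is present iff $(M,g)$ is \emph{not} self-dual Einstein with $t_2=6/s$.

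Combining these, the class is $\mathscr W_1\oplus\mathscr W_4$ together with $\mathscr W_2$ and/or $\mathscr W_5$ according to the two $D_3$-conditions, and I would split into cases. Generically both tracefree summands survive, giving $\mathscr W_1\oplus\mathscr W_2\oplus\mathscr W_4\oplus\mathscr W_5$. If exactly one $D_3$-condition holds the class loses one tracefree summand; since the vertical and horizontal summands are interchanged by the duality underlying the $36$-class count, the two module-sets $\{\mathscr W_1,\mathscr W_4,\mathscr W_5\}$ and $\{\mathscr W_1,\mathscr W_2,\mathscr W_4\}$ represent \emph{one} Naveira class, recorded by the anti-self-dual representative $\mathscr W_1\oplus\mathscr W_4\oplus\mathscr W_5$. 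This is case (i): it holds iff $(M,g)$ is anti-self-dual Einstein with $t_1=6/s$, the companion condition on $t_2$ being automatic because such a manifold, unless it is of constant curvature, has $\mathcal W_-\neq0$ and so $\mathscr W_5$ is forced to be present. If both $D_3$-conditions hold, both tracefree parts vanish; this forces $\mathcal W_+=\mathcal W_-=\mathcal B=0$ and $t_1=t_2=6/s$, i.e.\ $(M,g)$ of constant curvature with $t_1=t_2=\tfrac{3}{8\chi}$ (cf.\ Proposition~\ref{prop-D-1}(I)(i)), which is case (ii) with class $\mathscr W_1\oplus\mathscr W_4$.

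The main obstacle is the correct bookkeeping of the dictionary, and in particular the duality identification that collapses the two apparent three-summand module-sets into the single class of case (i); one must also verify that the boundary between cases (i) and (ii) is exactly the conformally flat Einstein (constant curvature) locus, so that the clean ``no condition on $t_2$'' statement of (i) is consistent with the stronger simultaneous requirement $t_1=t_2=\tfrac{3}{8\chi}$ of (ii).
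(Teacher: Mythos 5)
Your assembly of the input data is exactly how the paper proceeds: Theorem~\ref{K2} has no separate proof there, being read off from Propositions~\ref{prop-D-F}, \ref{prop-D-1}, \ref{D-D2}, \ref{prop-D-3} and their counterparts for ${\mathcal D}_{\nu}^{\perp}$ (Propositions~\ref{prop D-perp-F}, \ref{prop D-perp-1} and the two unlabelled ones), via the dictionary $F\leftrightarrow$ antisymmetric summand, $D_2\leftrightarrow$ trace summand, $D_3\leftrightarrow$ tracefree symmetric summand. Your bookkeeping of which proposition yields which condition, and your treatment of the generic case and of case (ii), are correct.

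The genuine flaw is the duality step. The classes appearing in Theorem~\ref{K2} are fixed subspaces relative to the designated vertical distribution ${\mathcal D}_2$ (the $+1$-eigenspace of ${\mathcal K}_2$); this is confirmed by Theorem~\ref{all for Nav}(II), where membership in ${\mathscr W}_1\oplus{\mathscr W}_4\oplus{\mathscr W}_5$ is glossed precisely as ``${\mathcal D}_2$ totally geodesic and ${\mathcal D}_2^{\perp}$ minimal''. The duality used to count the $36$ classes (renaming which distribution is called vertical) is \emph{not} applied in these statements, and cannot be: if you identify $\{{\mathscr W}_1,{\mathscr W}_2,{\mathscr W}_4\}$ with $\{{\mathscr W}_1,{\mathscr W}_4,{\mathscr W}_5\}$, the ``only if'' of (i) becomes false --- take $(M,g)={\mathbb C}{\mathbb P}^2$ (self-dual Einstein, $s>0$, not anti-self-dual) with $t_2=6/s$ and $t_1$ arbitrary; under your identification this manifold would be ``in'' the class of case (i), yet it is not anti-self-dual with $t_1=6/s$. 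The correct handling of the case where only the horizontal $D_3$-condition holds is simpler and different: there the tensor lies in ${\mathscr W}_1\oplus{\mathscr W}_2\oplus{\mathscr W}_4\subset{\mathscr W}_1\oplus{\mathscr W}_2\oplus{\mathscr W}_4\oplus{\mathscr W}_5$, so the first assertion of the theorem is met by the largest listed class, while membership in ${\mathscr W}_1\oplus{\mathscr W}_4\oplus{\mathscr W}_5$ genuinely fails (the ${\mathscr W}_2$-component is nonzero), exactly as the ``only if'' of (i) demands. With this correction, (i) is the plain chain: membership in ${\mathscr W}_1\oplus{\mathscr W}_4\oplus{\mathscr W}_5$ $\Leftrightarrow$ ${\mathscr W}_2$-component $=0$ $\Leftrightarrow$ $D_3$ (equivalently, by Propositions~\ref{D-D2} and \ref{prop-D-3}, $D_1$) for ${\mathcal D}_2$ $\Leftrightarrow$ anti-self-dual Einstein, $s>0$, $t_1=6/s$; in particular ``no condition on $t_2$'' needs no auxiliary argument about ${\mathcal W}_-\neq 0$ --- it holds simply because the condition characterizing the vanishing of ${\mathscr W}_2$ involves only $t_1$.
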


\begin{thm}\label{K3}
 The Riemannian almost product manifold $({\mathscr P},{\mathcal K}_{3}, G_{\bf t})$ belongs to the Naveira classes
${\mathcal W}_1\oplus{\mathcal W}_4\oplus{\mathcal W}_5$, ${\mathcal
W}_4\oplus{\mathcal W}_5$, or ${\mathcal W}_4$.

\smallskip

$(i)$~ $({\mathscr P},{\mathcal K}_{3}, G_{\bf t})\in {\mathcal
W}_4\oplus{\mathcal W}_5$ if and only if $(M,g)$ is of constat
curvature.

\smallskip

$(ii)$~$({\mathscr P},{\mathcal K}_{3}, G_{\bf t})\in {\mathcal
W}_4$  if and only if $(M,g)$ is of  positive constant sectional
curvature $\chi$ and $t_1=t_2=\displaystyle{\frac{3}{8\chi}}$

\end{thm}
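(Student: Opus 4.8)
The plan is to determine the Naveira class of $({\mathscr P},{\mathcal K}_3,G_{\bf t})$ by reading off the Gil-Medrano types of its two characteristic distributions and then translating, through the correspondence between the conditions $F,D_1,D_2,D_3$ and the irreducible Naveira summands set up in Section~\ref{Gil-Medrano}, into the labels ${\mathcal W}_i$. Here the $(+1)$-eigenbundle is ${\mathcal D}_3$, which is horizontal and of rank $2$, while ${\mathcal D}_3^{\perp}$ has rank $6$ and contains the whole vertical bundle. So the first and only real step is to collect, from the propositions already established, exactly which conditions hold on each of ${\mathcal D}_3$ and ${\mathcal D}_3^{\perp}$.

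For ${\mathcal D}_3$: Proposition~\ref{prop-D-1}(II) gives property $D_1$ unconditionally, so by Remark~2 (equivalently, Propositions~\ref{D-D2} and \ref{prop-D-3}) it also has $D_2$ and $D_3$ for every $(M,g)$ and every $\bf t$; and Proposition~\ref{prop-D-F}(ii) says it has $F$ precisely when $(M,g)$ is of constant curvature. Thus $F$ is the only condition on ${\mathcal D}_3$ that can fail. For ${\mathcal D}_3^{\perp}$: the last two propositions of Section~\ref{Gil-Medrano} give $D_2$ always and $D_3$ exactly when $D_1$ holds; Proposition~\ref{prop D-perp-F}(ii) says $F$ never holds; and Proposition~\ref{prop D-perp-1}(I)(ii) says $D_1$ (hence $D_3$) holds precisely when $(M,g)$ is of positive constant sectional curvature $\chi$ with $t_1=t_2=\frac{3}{8\chi}$.

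Now I translate these data into summands. Because ${\mathcal D}_3$ always satisfies $D_1$, all of its symmetric (trace and traceless) contributions vanish identically, and the only summand it can produce is the skew one detected by the failure of $F$; this is ${\mathcal W}_1$, present exactly when $(M,g)$ is not of constant curvature. On ${\mathcal D}_3^{\perp}$ the trace summand always vanishes (since $D_2$ holds), the skew summand ${\mathcal W}_4$ detected by the failure of $F$ is always present, and the symmetric-traceless summand ${\mathcal W}_5$ detected by the failure of $D_3$ ($=D_1$, since $D_2$ already holds) is present exactly when $(M,g)$ is not of positive constant curvature with $t_1=t_2=\frac{3}{8\chi}$. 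Adding the three contributions and using that positive constant curvature forces constant curvature, one reads off the trichotomy: the generic type ${\mathcal W}_1\oplus{\mathcal W}_4\oplus{\mathcal W}_5$; the type ${\mathcal W}_4\oplus{\mathcal W}_5$ exactly when $(M,g)$ has constant curvature, which supplies $F$ on ${\mathcal D}_3$ and deletes ${\mathcal W}_1$; and the type ${\mathcal W}_4$ exactly when $(M,g)$ has positive constant curvature with $t_1=t_2=\frac{3}{8\chi}$, which in addition supplies $D_1$ on ${\mathcal D}_3^{\perp}$ and deletes ${\mathcal W}_5$. This is precisely the assertion.

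All the analytic content sits in the quoted propositions, so the remaining difficulty is entirely the bookkeeping of this dictionary, and two points want attention. First, one must confirm that no nominal summand degenerates because of the low rank $2$ of ${\mathcal D}_3$: here this is harmless, since every symmetric-type summand attached to ${\mathcal D}_3$ is already annihilated by the unconditional $D_1$, leaving the single skew summand $\Lambda^2{\mathcal D}_3\otimes{\mathcal D}_3^{\perp}\cong{\mathcal D}_3^{\perp}$, which is irreducible. Second, one must be sure that ${\mathcal W}_4$ is genuinely nonzero, so that the class is \emph{exactly} the stated sum and never smaller; this is immediate from Proposition~\ref{prop D-perp-F}(ii), whose explicit counterexample to $F$ on ${\mathcal D}_3^{\perp}$ exhibits a nonzero skew component, and it is also consistent with the non-integrability of ${\mathcal K}_3$ recorded in Proposition~\ref{non int}. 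I expect this matching of the three surviving Gil-Medrano defects to the labelled summands ${\mathcal W}_1,{\mathcal W}_4,{\mathcal W}_5$ to be the only genuinely delicate step.
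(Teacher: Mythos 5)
Your proposal is correct and is essentially the paper's own argument: the paper proves Theorem~\ref{K3} only by the remark that ``the results in the preceding section allow one to determine the Naveira classes,'' i.e.\ precisely by combining Propositions~\ref{prop-D-F}(ii), \ref{prop-D-1}(II), \ref{D-D2}, \ref{prop-D-3}, \ref{prop D-perp-F}(ii), \ref{prop D-perp-1}(I) and the two final propositions of Section~\ref{Gil-Medrano} with the dictionary (failure of $F$ $\leftrightarrow$ skew summands ${\mathcal W}_1,{\mathcal W}_4$; failure of $D_3$ given $D_2$ $\leftrightarrow$ ${\mathcal W}_2,{\mathcal W}_5$; failure of $D_2$ $\leftrightarrow$ ${\mathcal W}_3,{\mathcal W}_6$), exactly as you do, and your dictionary agrees with the one the paper uses implicitly (as confirmed by Theorem~\ref{all for Nav}(III)). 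Your added checks — that ${\mathcal W}_4$ never vanishes and that positive constant curvature subsumes constant curvature, so the three classes are nested and sharp — are also the right bookkeeping points.
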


 \begin{thm}\label{K4}
 The Riemannian almost product manifold $({\mathscr P},{\mathcal K}_{4}, G_{\bf t})$ belongs to the Naveira classes
 ${\mathscr W}_1\oplus {\mathscr W}_2\oplus {\mathscr W}_4\oplus {\mathscr W}_5$,
 ${\mathscr W}_1\oplus {\mathscr W}_4\oplus {\mathscr W}_5$, or
 ${\mathscr W}_1\oplus {\mathscr W}_4$.

 \smallskip

 $(i)$~$({\mathscr P},{\mathcal K}_{4}, G_{\bf t})\in {\mathscr W}_1\oplus {\mathscr W}_4\oplus {\mathscr W}_5$ if and
 only if  $(M,g)$ is self-dual and Einstein with positive  scalar curvature $s$, and $t_2=\displaystyle{\frac{6}{s}}$
 (no condition on $t_1>0$).

 \smallskip

 $(ii)$~ $({\mathscr P},{\mathcal K}_{4},G_{\bf t})\in {\mathscr W}_1\oplus {\mathscr W}_4$ if and only if $(M,g)$ is of
 positive constant curvature $\chi$ and $t_1=t_2=\frac{3}{8\chi}$.

 \end{thm}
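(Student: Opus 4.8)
The plan is to obtain the Naveira class of $(\mathscr{P},\mathcal{K}_4,G_{\bf t})$ by translating the Gil--Medrano invariants of the pair $(\mathcal{D}_4,\mathcal{D}_4^\perp)$, already computed in the previous section, into the vanishing or survival of the six Naveira components. For this I first set up the dictionary. The only nonzero part of the tensor $DF_{\bf t,4}$ mixes the two eigendistributions, and since here $\dim\mathcal{D}_4=\dim\mathcal{D}_4^\perp=4$, under $O(4)\times O(4)$ it breaks into six irreducible pieces: from the two $\mathcal{D}_4$-slots one gets an antisymmetric, a symmetric trace-free, and a trace part, which are $\mathscr{W}_1,\mathscr{W}_2,\mathscr{W}_3$; from the two $\mathcal{D}_4^\perp$-slots one gets the analogous three pieces $\mathscr{W}_4,\mathscr{W}_5,\mathscr{W}_6$. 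In this identification $\mathscr{W}_1$ (resp. $\mathscr{W}_4$) is the integrability defect of $\mathcal{D}_4$ (resp. $\mathcal{D}_4^\perp$) and vanishes precisely under property $F$; $\mathscr{W}_2$ (resp. $\mathscr{W}_5$) is the trace-free second fundamental form and vanishes under $D_3$; and $\mathscr{W}_3$ (resp. $\mathscr{W}_6$) is the mean-curvature part and vanishes under $D_2$.

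With this in hand the bulk of the argument is bookkeeping. By Proposition~\ref{D-D2} and its horizontal counterpart both $\mathcal{D}_4$ and $\mathcal{D}_4^\perp$ always satisfy $D_2$, so $\mathscr{W}_3=\mathscr{W}_6=0$ identically and the structure always lies in $\mathscr{W}_1\oplus\mathscr{W}_2\oplus\mathscr{W}_4\oplus\mathscr{W}_5$. By Proposition~\ref{prop-D-F}(i) (case $\nu=4$) and Proposition~\ref{prop D-perp-F}(ii) neither distribution ever has property $F$, so $\mathscr{W}_1$ and $\mathscr{W}_4$ never vanish; hence every admissible subclass still contains $\mathscr{W}_1\oplus\mathscr{W}_4$, and the whole question reduces to deciding when $\mathscr{W}_2$ and/or $\mathscr{W}_5$ drop out.

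I would then handle the two optional components. Since $D_2$ always holds, Proposition~\ref{prop-D-3} (and its horizontal analogue) give $D_3\Leftrightarrow D_1$ on each distribution, so $\mathscr{W}_2=0$ iff $\mathcal{D}_4$ has $D_1$ and $\mathscr{W}_5=0$ iff $\mathcal{D}_4^\perp$ has $D_1$. By Proposition~\ref{prop-D-1}(I)(iii) the first holds exactly when $(M,g)$ is self-dual and Einstein with $s>0$ and $t_2=6/s$, which is case (i) and yields $\mathscr{W}_1\oplus\mathscr{W}_4\oplus\mathscr{W}_5$; by Proposition~\ref{prop D-perp-1}(I)(iii) the second holds exactly when $(M,g)$ is anti-self-dual and Einstein with $s>0$ and $t_1=6/s$. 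For case (ii) I impose both $D_1$ conditions at once: $\mathcal{W}_+=\mathcal{W}_-=\mathcal{B}=0$ forces constant (hence positive) curvature and $t_1=t_2$, recovering the stated normalization and leaving only $\mathscr{W}_1\oplus\mathscr{W}_4$.

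The one genuinely delicate point is the dictionary of the first paragraph, which I would quote from Naveira's decomposition and check carefully so that $\mathscr{W}_2,\mathscr{W}_5$ get assigned to the $\mathcal{D}_4$- and $\mathcal{D}_4^\perp$-umbilicity components in the correct order. I would also be explicit that ``belongs to a Naveira class'' means the tensor lies in the corresponding subspace, not that the subspace is minimal: with that reading the three listed classes form the chain $\mathscr{W}_1\oplus\mathscr{W}_4\subset\mathscr{W}_1\oplus\mathscr{W}_4\oplus\mathscr{W}_5\subset\mathscr{W}_1\oplus\mathscr{W}_2\oplus\mathscr{W}_4\oplus\mathscr{W}_5$, so the a priori fourth possibility $\mathscr{W}_1\oplus\mathscr{W}_2\oplus\mathscr{W}_4$ (only $\mathscr{W}_5$ degenerating, e.g. over an anti-self-dual Einstein base that is not self-dual) is simply absorbed into the top class and needs no separate entry. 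Finally I would reconcile the two equalities $t=6/s$ with the constant-curvature normalization $t_1=t_2=3/(8\chi)$ appearing in the statement.
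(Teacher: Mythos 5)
Your proposal is correct and is essentially the paper's own (implicit) argument: the paper offers no separate proof of Theorem~\ref{K4}, deriving it exactly as you do from the propositions of Section~\ref{Gil-Medrano} --- property $D_2$ holds for $\mathcal{D}_4$ and $\mathcal{D}_4^{\perp}$ always (so $\mathscr{W}_3=\mathscr{W}_6=0$), property $F$ holds for neither (Propositions~\ref{prop-D-F} and \ref{prop D-perp-F}, so $\mathscr{W}_1$ and $\mathscr{W}_4$ never vanish), and the equivalence $D_3\Leftrightarrow D_1$ combined with Propositions~\ref{prop-D-1}(I)(iii) and \ref{prop D-perp-1}(I)(iii) settles when $\mathscr{W}_2$, respectively $\mathscr{W}_5$, drops out, after which the two conditions together force $\mathcal{W}_{+}=\mathcal{W}_{-}=\mathcal{B}=0$ and $t_1=t_2$ as in case (ii). Your explicit handling of the fourth a priori possibility $\mathscr{W}_1\oplus\mathscr{W}_2\oplus\mathscr{W}_4$ (realized by an anti-self-dual, non-self-dual Einstein base with $t_1=6/s$) via the subspace-membership reading, under which it is absorbed into the largest listed class, addresses a point the paper passes over in silence and is precisely what is needed for the theorem's list of three classes and the ``if and only if'' in part (i) to be simultaneously accurate.
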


 \section{Geometric interpretation}\label{Geom int}

  In this section, we restate the results obtained in preceding sections in geometric terms.

Recall the geometric characterizations of the Gil-Medrano conditions
for  the  vertical or horizontal distribution $\mathfrak{D}$  of a
Riemannian almost product manifold (\cite{Med}).  First, condition
$F$ is equivalent to  $\mathfrak{D}$ being integrable. Next, a
second fundamental form $T$  of a distribution on a Riemannian
manifold has been proposed by B. Reinhart in \cite{Rein}. It is a
symmetric $2$-form with values in the normal bundle. If the
distribution is integrable, $T$ coincides with the usual second
fundamental form of the leaves as immersed submanifolds. A
distribution is called minimal if the trace of $T$ vanishes; it is
called totally geodesic if $T=0$. It has been proved in \cite{Med}
and \cite{Rein} that a distribution is totally  geodesic if and only
if every geodesic, which is tangent to the distribution at one
point, is tangent to it at all points. Now, condition $D_1$ means
that $\mathfrak{D}$ is totally geodesic, while condition $D_2$ is
equivalent to $\mathfrak{D}$ being minimal.

 \begin{thm}\label{all for D}
 {\rm{(I)}}~ $(i)$~ The distributions ${\mathcal D}_{\nu}$, $\nu=1,2,4$, are not integrable.

 \smallskip

 $(ii)$~ The distribution ${\mathcal D}_{3}$ is integrable if and only if $(M,g)$ is of constat curvature.

\medskip

{\rm{(II)}}~All distributions ${\mathcal D}_{\nu}$ are minimal,
$\nu=1,...,4$.

\medskip

{\rm{(III)}}~ $(i)$ ~The distribution ${\mathcal D}_{3}$ is totally
geodesic.

\smallskip

$(ii)$~ The distribution ${\mathcal D}_{\nu}$, $\nu=1,2,4$, is
totally geodesic if and only if:

\smallskip

 $\bullet$ ~$(M,g)$ is of  positive constant sectional curvature $\chi$ and $t_1=t_2=\displaystyle{\frac{3}{8\chi}}$, in the
case  $\nu=1$;

\smallskip

$\bullet$ ~$(M,g)$ is anti-self-dual and Einstein with positive
scalar curvature $s$, and $t_1=\displaystyle{\frac{6}{s}}$, in the
case $\nu=2$  (no condition on $t_2>0$).

\smallskip

$\bullet$ ~ $(M,g)$ is self-dual and Einstein with positive  scalar
curvature $s$, and $t_2=\displaystyle{\frac{6}{s}}$, in the case
$\nu=4$  (no condition on $t_1>0$).

 \end{thm}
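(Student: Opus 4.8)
The plan is to obtain all three parts as immediate translations of the Gil-Medrano propositions of Section~\ref{Gil-Medrano} into geometric language, using the three equivalences recalled at the beginning of this section: for a vertical or horizontal distribution $\mathfrak{D}$, property $F$ is equivalent to integrability of $\mathfrak{D}$, property $D_2$ to minimality, and property $D_1$ to being totally geodesic. Consequently no new computation is required; the work consists entirely in matching each geometric claim with the corresponding algebraic statement already proved.

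First I would dispose of part (I) via the equivalence between property $F$ and integrability. Statement (I)(i), that $\mathcal{D}_\nu$ is non-integrable for $\nu=1,2,4$, is exactly Proposition~\ref{prop-D-F}(i), which asserts that these distributions fail property $F$; and statement (I)(ii), characterizing integrability of $\mathcal{D}_3$ by constant curvature of $(M,g)$, is exactly Proposition~\ref{prop-D-F}(ii). Part (II) follows at once from Proposition~\ref{D-D2}: each $\mathcal{D}_\nu$, $\nu=1,\dots,4$, has property $D_2$, hence is minimal.

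For part (III) I would invoke the equivalence between property $D_1$ and being totally geodesic. Then (III)(i), that $\mathcal{D}_3$ is totally geodesic, is statement (II) of Proposition~\ref{prop-D-1}; and (III)(ii) reproduces the curvature-and-parameter conditions of Proposition~\ref{prop-D-1}(I) for $\nu=1,2,4$. The only care needed is the bookkeeping of checking that the three bulleted conditions in (III)(ii) coincide with those listed in Proposition~\ref{prop-D-1}(I), which they do. Since the substantive analysis---deriving properties $F$, $D_1$, $D_2$ from the covariant-derivative formulas of Lemmas~\ref{DAFB} and~\ref{DtilAFtilB}---was carried out in the previous section, there is essentially no obstacle here beyond applying the dictionary correctly; the genuine difficulty resided in those earlier propositions rather than in this restatement.
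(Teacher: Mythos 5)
Your proposal is correct and is exactly the paper's (implicit) argument: Theorem~\ref{all for D} is obtained by translating Propositions~\ref{prop-D-F}, \ref{D-D2}, and \ref{prop-D-1} through the Gil-Medrano dictionary ($F$ $\Leftrightarrow$ integrable, $D_2$ $\Leftrightarrow$ minimal, $D_1$ $\Leftrightarrow$ totally geodesic) recalled at the start of Section~\ref{Geom int}. The matching of each clause with the corresponding proposition is accurate, so nothing is missing.
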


  \begin{thm}\label{all for D-perp}
  {\rm{(I)}}~ $(i)$~ The distributions ${\mathcal D}_{\nu}^{\perp}$, $\nu=2,3,4$, are not integrable.

  \smallskip

  $(ii)$~ The distribution ${\mathcal D}_{1}^{\perp}$ is integrable if and only $(M,g)$ is of constat curvature.

 \medskip

 {\rm{(II)}}~All distributions ${\mathcal D}_{\nu}^{\perp}$ are minimal, $\nu=1,...,4$.

 \medskip

 {\rm{(III)}}~ $(i)$ ~ The distribution ${\mathcal D}_{1}^{\perp}$ is totally geodesic.

 \smallskip

 $(ii)$~ The distribution ${\mathcal D}_{\nu}^{\perp}$, $\nu=2,3,4$, is totally geodesic if and only if:

\smallskip

 $\bullet$ ~$(M,g)$ is  self-dual and Einstein with positive  scalar curvature $s$, and $t_2=\displaystyle{\frac{6}{s}}$, in
 the case $\nu=2$  (no condition on $t_1>0$).

 \smallskip
                                                                                                                                 \smallskip

 $\bullet$ ~$(M,g)$ is of  positive constant sectional curvature $\chi$ and $t_1=t_2=\displaystyle{\frac{3}{8\chi}}$, in the
 case  $\nu=3$;

 \smallskip

 $\bullet$ ~ $(M,g)$ is anti-self-dual and Einstein with positive  scalar curvature $s$, and $t_1=\displaystyle{\frac{6}{s}}$, in the
 case $\nu=4$  (no condition on $t_2>0$).

  \end{thm}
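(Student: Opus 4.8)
The plan is to read Theorem~\ref{all for D-perp} as the purely geometric reformulation of the Gil-Medrano-type computations already carried out for the orthogonal distributions $\mathcal{D}_\nu^\perp$ in Section~\ref{Gil-Medrano}. The only extra ingredient is the dictionary recalled at the opening of the present section: for a vertical or horizontal distribution $\mathfrak{D}$ of a Riemannian almost product manifold, property $F$ is equivalent to integrability, property $D_1$ to being totally geodesic, and property $D_2$ to being minimal. Since $\mathcal{D}_\nu^\perp$ is precisely the $(-1)$-eigendistribution of $\mathcal{K}_\nu$, it is one of the two distributions to which this dictionary applies, and each clause of the theorem then becomes a direct transcription of the corresponding Gil-Medrano condition.

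First I would dispatch part (I). By Proposition~\ref{prop D-perp-F}$(ii)$, $\mathcal{D}_\nu^\perp$ fails property $F$ for $\nu=2,3,4$; since $F$ is equivalent to integrability, these distributions are non-integrable, which is (I)$(i)$. For $\nu=1$, Proposition~\ref{prop D-perp-F}$(i)$ asserts that $\mathcal{D}_1^\perp$ has property $F$ exactly when $(M,g)$ has constant curvature, and under the same equivalence this yields (I)$(ii)$. Part (II) is then immediate from the proposition of Section~\ref{Gil-Medrano} establishing that every $\mathcal{D}_\nu^\perp$ has property $D_2$, combined with the equivalence $D_2\Leftrightarrow\text{minimal}$.

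For part (III) I would invoke Proposition~\ref{prop D-perp-1} together with $D_1\Leftrightarrow\text{totally geodesic}$. Statement (II) of that proposition gives that $\mathcal{D}_1^\perp$ always has property $D_1$, hence (III)$(i)$; statement (I) provides the three parameter-and-curvature criteria for $\nu=2,3,4$, which transcribe verbatim into (III)$(ii)$. In assembling these I would take care to match the cases correctly, noting that the self-dual and anti-self-dual roles for $\nu=2$ and $\nu=4$ are interchanged relative to the analogous statement for $\mathcal{D}_\nu$ (Proposition~\ref{prop-D-1}), and that the positivity of the scalar curvature $s$, respectively of $\chi$, is forced by the positivity of the parameters through the relations $t_1=6/s$, $t_2=6/s$, or $t_1=t_2=3/(8\chi)$.

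The genuine mathematical content---the curvature expressions for $(D_{\widetilde A}\mathcal{K}_\nu)(\widetilde B)$ in Lemma~\ref{DtilAFtilB} and the vanishing analysis distilled from them---has already been absorbed into the cited propositions, so no fresh estimate is needed here and there is no real obstacle to overcome. The only point demanding mild care is confirming that the geometric characterizations of \cite{Med} apply to $\mathcal{D}_\nu^\perp$ on exactly the same footing as to $\mathcal{D}_\nu$; this is clear because $\mathcal{D}_\nu$ and $\mathcal{D}_\nu^\perp$ together realize the orthogonal splitting $T\mathscr{P}=V\oplus H$ defining $\mathcal{K}_\nu$, and the Gil-Medrano conditions are formulated symmetrically for either summand.
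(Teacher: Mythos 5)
Your proposal is correct and coincides with the paper's own (implicit) argument: the paper offers no separate proof of Theorem~\ref{all for D-perp}, since it is exactly the transcription of Propositions~\ref{prop D-perp-F}, the $D_2$-proposition, and Proposition~\ref{prop D-perp-1} through Gil-Medrano's dictionary ($F\Leftrightarrow$ integrable, $D_2\Leftrightarrow$ minimal, $D_1\Leftrightarrow$ totally geodesic) recalled at the start of Section~\ref{Geom int}. Your case-matching for $\nu=2,3,4$ in part (III), including the interchange of the self-dual and anti-self-dual roles relative to Proposition~\ref{prop-D-1}, agrees with the paper.
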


In the next theorem, we give a geometric interpretation of the
Naveira classes of the Riemannian almost product manifolds
$({\mathscr P},{\mathcal K}_{\nu}, G_{\bf t})$ determined in
Theorems~\ref{K1}-\ref{K4}.

\begin{thm}\label{all for Nav}

 {\rm{(I)}}~$(i)$~ The distributions ${\mathcal D}_{1}$ and ${\mathcal D}_{1}^{\perp}$ are both minimal \rm{($({\mathcal K_1},G_{\bf t})\in
 {\mathscr W}_1\oplus {\mathscr W}_2\oplus {\mathscr W}_4\oplus {\mathscr W}_5$)}.

\smallskip

$(ii)$~${\mathcal D}_{1}$ and ${\mathcal D}_{1}^{\perp}$ are totally
geodesic distributions \rm{($({\mathcal K_1},G_{\bf t})\in
{\mathscr W}_1\oplus {\mathscr W}_4$)} if and only if  $(M,g)$ is of
positive constant curvature $\chi$ and $t_1=t_2=\frac{3}{8\chi}$.

\medskip

{\rm{(II)}}~$(i)$~ The distributions ${\mathcal D}_{2}$ and
${\mathcal D}_{2}^{\perp}$ are both minimal \rm{($({\mathcal
K_2},G_{\bf t})\in
 {\mathscr W}_1\oplus {\mathscr W}_2\oplus {\mathscr W}_4\oplus {\mathscr W}_5$)}.

\smallskip

$(ii)$~  ${\mathcal D}_{2}$  is totally geodesic and ${\mathcal
D}_{2}^{\perp}$ is minimal \rm{($({\mathcal K}_{2}, G_{\bf t})\in
{\mathscr W}_1\oplus {\mathscr W}_4\oplus {\mathscr W}_5$)} if and
only if $(M,g)$ is  anti-self-dual and Einstein with positive
scalar curvature $s$, and $t_1=\displaystyle{\frac{6}{s}}$ (no
condition on $t_2>0$).

\smallskip

$(iii)$~  ${\mathcal D}_{2}$  and ${\mathcal D}_{2}^{\perp}$  are
totally geodesic distributions \rm{($({\mathcal K}_{2},G_{\bf t})\in
{\mathscr W}_1\oplus {\mathscr W}_4)$}  if and only if $(M,g)$ is of
positive constant curvature $\chi$ and $t_1=t_2=\frac{3}{8\chi}$.

\medskip

{\rm{(III)}}~$(i)$~The distribution ${\mathcal D}_{3}$ is totally
geodesic and ${\mathcal D}_{3}^{\perp}$ is minimal

\rm{((${\mathcal K_{3}},G_{\bf t})\in {\mathcal W}_1\oplus{\mathcal
W}_4\oplus{\mathcal W}_5$)}.

\smallskip

$(ii)$~ ${\mathcal D}_{3}$ is integrable and totally geodesic, and
${\mathcal D}_{3}^{\perp}$ is minimal \rm{($({\mathcal K}_{3},
G_{\bf t})\in {\mathcal W}_4\oplus{\mathcal W}_5$)} if and only if
$(M,g)$ is of constat curvature

\smallskip

$(iii)$~${\mathcal D}_{3}$ is integrable and totally geodesic, and
${\mathcal D}_{3}^{\perp}$ is totally geodesic \rm{($({\mathcal
K}_{3}, G_{\bf t})\in {\mathcal W}_4$)}  if and only if $(M,g)$ is
of  positive constant sectional curvature $\chi$ and
$t_1=t_2=\displaystyle{\frac{3}{8\chi}}$

\medskip

{\rm{(IV)}}~$(i)$~ The distributions ${\mathcal D}_{4}$ and
${\mathcal D}_{4}^{\perp}$ are both minimal \rm{($({\mathcal
K_4},G_{\bf t})\in
 {\mathscr W}_1\oplus {\mathscr W}_2\oplus {\mathscr W}_4\oplus {\mathscr W}_5$)}.

\smallskip

$(ii)$~  ${\mathcal D}_{4}$  is totally geodesic and ${\mathcal
D}_{4}^{\perp}$ is minimal \rm{($({\mathcal K}_{4}, G_{\bf t})\in
{\mathscr W}_1\oplus {\mathscr W}_4\oplus {\mathscr W}_5$)} if and
only if $(M,g)$ is self-dual and Einstein with positive  scalar
curvature $s$, and $t_2=\displaystyle{\frac{6}{s}}$ (no condition on
$t_1>0$).

\smallskip

$(iii)$~  ${\mathcal D}_{4}$  and ${\mathcal D}_{4}^{\perp}$  are
totally geodesic distributions \rm{($({\mathcal K}_{4},G_{\bf t})\in
{\mathscr W}_1\oplus {\mathscr W}_4)$}  if and only if $(M,g)$ is of
positive constant curvature $\chi$ and $t_1=t_2=\frac{3}{8\chi}$.

\end{thm}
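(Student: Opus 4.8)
The plan is to assemble Theorem~\ref{all for Nav} directly from the results already in hand, using the geometric dictionary recalled at the start of this section: a distribution $\mathfrak{D}$ has property $F$ precisely when it is integrable, property $D_1$ precisely when it is totally geodesic, and property $D_2$ precisely when it is minimal. Thus every assertion about minimality, total geodesy, or integrability of ${\mathcal D}_\nu$ or ${\mathcal D}_\nu^\perp$ is a translation of a Gil-Medrano statement proved in Section~\ref{Gil-Medrano}, while the Naveira-class labels in parentheses are exactly those computed in Theorems~\ref{K1}--\ref{K4}.

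First I would record the unconditional facts. By Proposition~\ref{D-D2} together with its dual (the proposition asserting that ${\mathcal D}_\nu^\perp$ has property $D_2$), both ${\mathcal D}_\nu$ and ${\mathcal D}_\nu^\perp$ are minimal for every $\nu$ and every ${\bf t}$; this yields the ``both minimal'' clauses (I)(i), (II)(i), (IV)(i) together with the largest class ${\mathscr W}_1\oplus{\mathscr W}_2\oplus{\mathscr W}_4\oplus{\mathscr W}_5$ in each of these cases. Moreover Proposition~\ref{prop-D-1}(II) shows that ${\mathcal D}_3$ always has $D_1$ and is therefore always totally geodesic, which supplies the standing hypothesis of part (III); dually, Proposition~\ref{prop D-perp-1}(II) makes ${\mathcal D}_1^\perp$ always totally geodesic.

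Next I would read off the conditional statements. The total-geodesy criteria for ${\mathcal D}_\nu$ $(\nu=1,2,4)$ come verbatim from Proposition~\ref{prop-D-1}(I), and those for ${\mathcal D}_\nu^\perp$ $(\nu=2,3,4)$ from Proposition~\ref{prop D-perp-1}(I); combining the criterion for a factor with that for its complement produces the ``both totally geodesic'' cases, each forcing positive constant curvature with $t_1=t_2=\frac{3}{8\chi}$. The integrability assertions are the contrapositives of the $F$-propositions: ${\mathcal D}_3$ is integrable iff $(M,g)$ has constant curvature (Proposition~\ref{prop-D-F}(ii)) while ${\mathcal D}_1,{\mathcal D}_2,{\mathcal D}_4$ never are (Proposition~\ref{prop-D-F}(i)), and dually for ${\mathcal D}_\nu^\perp$ through Proposition~\ref{prop D-perp-F}. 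Matching each resulting combination of conditions on the vertical and horizontal distributions against Theorems~\ref{K1}--\ref{K4} then pins down the stated Naveira subclass.

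The main point requiring care is the bookkeeping that identifies which \emph{pair} of Gil-Medrano conditions corresponds to which Naveira summand. For instance, one must confirm that ``${\mathcal D}_3$ totally geodesic with ${\mathcal D}_3^\perp$ only minimal'' is exactly ${\mathcal W}_1\oplus{\mathcal W}_4\oplus{\mathcal W}_5$, that imposing integrability of ${\mathcal D}_3$ removes the ${\mathcal W}_1$ summand (yielding ${\mathcal W}_4\oplus{\mathcal W}_5$ under constant curvature), and that strengthening ${\mathcal D}_3^\perp$ to totally geodesic removes the ${\mathcal W}_5$ summand (yielding ${\mathcal W}_4$ in the positive constant curvature case). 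This is where the irreducible decomposition underlying the $36$ Naveira classes, filtered through the conditions $F$, $D_i$, $F_i$, must be applied consistently on both factors; once that correspondence is fixed, every clause of the theorem follows mechanically from the propositions of Section~\ref{Gil-Medrano} and Theorems~\ref{K1}--\ref{K4}.
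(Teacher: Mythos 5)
Your proposal is correct and matches the paper's (implicit) argument: the theorem is exactly the compilation of the Gil-Medrano propositions of Section~\ref{Gil-Medrano} (property $F$ = integrable, $D_1$ = totally geodesic, $D_2$ = minimal, with the unconditional minimality and the always-totally-geodesic cases ${\mathcal D}_3$, ${\mathcal D}_1^{\perp}$ handled just as you say), paired with the Naveira-class labels already fixed in Theorems~\ref{K1}--\ref{K4}. The bookkeeping you flag as the delicate point is precisely what Theorems~\ref{K1}--\ref{K4} encode, so citing them closes that gap and nothing further is needed.
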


\smallskip

It is a result of Hitchin (see \cite[Theorem 13.30]{Besse}) that
every compact self-dual (anti-self-dual) Einstein manifold with
positive scalar curvature is isometric to $S^4$ or ${\mathbb
C}{\mathbb P}^2$ with their standard metrics and orientations (resp.
the opposite orientations) (cf. also \cite{FK,Hit}).

It is well known \cite{AHS} that the twistor spaces ${\mathcal
Z}_{\pm}$ of $S^4$ and ${\mathbb C}{\mathbb P}^2$ can be identified
as smooth manifolds with ${\mathbb C}{\mathbb P}^3$ and the flag
complex manifold $F_3$. The sphere $S^4$ is conformally flat, so the
Atiyah-Hitchin-Singer almost complex structure on both twistor
spaces ${\mathcal Z}_{+}$ and ${\mathcal Z}_{-}$ of $S^4$ is
integrable. It coincides with the complex structure of ${\mathbb
C}{\mathbb P}^3$.  The manifold ${\mathbb C}{\mathbb P}^2$ with the
orientation induced by its complex structure is self-dual, but not
anti-self-dual. The  Atiyah-Hitchin-Singer almost complex structure
is integrable only on ${\mathcal Z}_{-}$ and it coincides on this
twistor space with the complex structure of $F_3$. We recall now how
the points of ${\mathbb C}{\mathbb P}^3$ and $F_3$ determine complex
structures on the tangent spaces of the corresponding base manifolds
compatible with the metric and $\pm$ the orientation.

In order to deal with the twistor space of $S^4$, we identify $S^4$
with  quaternionic projective line ${\mathbb H}{\mathbb P}^1$.
Writing quaternions as $z_1+z_2j$ with $z_1,z_2\in{\mathbb C}$, the
projection map $\pi:{\mathbb C}{\mathbb P}^3\to {\mathbb H}{\mathbb
P}^1$ is given in homogeneous coordinates by $[z_1,z_2,z_3,z_4]\to
[z_1+z_2j,z_3+z_4j]$.  We orient the space of quaternions by means
of the basis $(1,i,j,k)$. Consider the sphere $S^7$ as a submanifold
of ${\mathbb H}^2$. The group $Sp(1)$ of unit quaternions acts on
$S^7$ by left multiplication and ${\mathbb H}{\mathbb P}^1$ is the
quotient space of $S^7$ under this action of $Sp(1)$. Denote the
quotient map by $\rho:S^7\to{\mathbb H}{\mathbb P}^1$. Let
$\zeta=[z]\in{\mathbb C}{\mathbb P}^3$, where $z\in{\mathbb
C}^4={\mathbb H}^2$ and $||z||=1$. Then $\rho(z)=\pi([z])$. Moreover
$Ker\,\rho_{\ast\,z}=span\{iz,jz,kz\}\subset T_zS^7$. Let ${\mathcal
H}_z$ be the orthogonal complement of $span\{iz,jz,kz\}$ in the
tangent space $T_zS^7$. Then ${\mathcal H}_z$ is invariant under
multiplications by $i,j,k$ and $\rho_{\ast\,z}|{\mathcal H}_{z}$ is
an isomorphism onto $T_{\pi(\zeta)}{\mathbb H}{\mathbb P}^1$. Let
$I$ be the complex structure on the vector space ${\mathcal H}_{z}$
defined by multiplication by $i$. Then
$\widetilde{I}=\rho_{\ast\,z}\circ I\circ (\rho_{\ast\,z}|{\mathcal
H}_{z})^{-1}$ is a complex structure on $T_{\pi(\zeta)}{\mathbb
H}{\mathbb P}^1$ compatible with the metric and the orientation. If
we consider the space ${\mathbb H}$ with the opposite orientation,
then $\widetilde{I}$ is compatible with the metric and the opposite
orientation of $T_{\pi(\zeta)}{\mathbb H}{\mathbb P}^1$. The complex
structure $\widetilde{I}$ does not depend on the choice of a
representative $z$ of the point $\zeta\in{\mathbb C}{\mathbb P}^3$.
We refer to \cite[Sec. 5.12]{Will} for details.

\smallskip

Now, consider the  complex flag manifold $F=F_3$. Recall that its
points are pairs $(l,m)$ of a complex line $l$ and a complex plane
$m$ in ${\mathbb C}^3$ such that $l\subset m$.  In this setting, the
projection map $\pi:{\mathcal Z}_{\pm}=F\to {\mathbb C}{\mathbb
P}^2$ is $(l,m)\to l^{\perp}\cap m$, where $l^{\perp}$ is the
orthogonal complement of $l$ in ${\mathbb C}^3$ with respect to the
standard Hermitian metric of ${\mathbb C}^3$. It is convenient to
set $E_1=l$, $E_2=l^{\perp}\cap m$, $E_3=m^{\perp}$ so that to
identify the points of $F$ with the triples $(E_1,E_2,E_3)$ of
mutually orthogonal complex lines in ${\mathbb C}^3$ with
$\oplus_{i=1}^{3}E_i={\mathbb C}^3$. Then the projection map $\pi$
sends $\sigma=(E_1,E_2,E_3)$ to $E_2$. Its fibre is $\{E_1:~
E_1~\rm{a~ complex ~line~ in}~ E_2^{\perp}\}\cong {\mathbb
C}{\mathbb P}^1$. The tangent space of the flag manifold $F$ at
$\sigma$  is isomorphic to $Hom(E_1,E_2)\oplus Hom(E_1,E_3)\oplus
Hom(E_2,E_3)$ (see, for example, \cite{Lam}). The embedding of, say,
$Hom(E_1,E_3)$ is defined as follows. For $f\in Hom(E_1,E_3)$ and
$t\in{\mathbb R}$, let $\Gamma_{f}(t)= \{x+tf(x):~x\in E_1\}$ be the
graph of the map $tf$ in $E_1\oplus E_3$. Then
$c_f(t)=(\Gamma_f(t),E_2,\Gamma_f(t)^{\perp})$ is a smooth curve in
$F$ passing through $\sigma$, and the map $f\to \dot c_f(0)$ is an
embedding of $Hom(E_1,E_3)$ into $T_{\sigma}F$. Similarly for
$Hom(E_1,E_2)$ and $Hom(E_2,E_3)$. Clearly $\pi\circ c_f(t)\equiv
E_2$. Therefore $Ker\,\pi_{\ast\,\sigma}=Hom(E_1,E_3)$ and the
restriction of $\pi_{\ast\,\sigma}$ to $Hom(E_1,E_2)\oplus
Hom(E_2,E_3)$ is a vector space isomorphism onto
$T_{\pi(\sigma)}{\mathbb C}{\mathbb P}^2=Hom(E_2,E_1\oplus E_3)=
Hom(E_2,E_1)\oplus Hom(E_2,E_3)$. In particular, we see that the map
$\pi$ is neither holomorphic nor anti-holomorphic. The
multiplication by $i$ in both $Hom(E_1,E_2)$ and $Hom(E_2,E_3)$
defines a complex structure on the vector space $Hom(E_1,E_2)\oplus
Hom(E_2,E_3)$. Transferring this complex structure to
$T_{\pi(\sigma)}{\mathbb C}{\mathbb P}^2$ by means of the map
$\pi_{\ast\,\sigma}$, we obtain a complex structure compatible with
the metric and the opposite orientation of ${\mathbb C}{\mathbb
P}^2$. In order to obtain a complex structure compatible with the
metric and the standard orientation of  ${\mathbb C}{\mathbb P}^2$,
we transfer the complex structure on  $Hom(E_1,E_2)\oplus
Hom(E_2,E_3)$, which is multiplication by $-i$ on $Hom(E_1,E_2)$ and
by $i$ on $Hom(E_2,E_3)$.

\medskip

\centerline{{\bf Acknowledgements}}

 The author would like to thank the referee for his/her remarks.

\end{document}